\newtheorem{theorem}{Theorem}
\newtheorem{corollary}[theorem]{Corollary}
\newtheorem{lemma}[theorem]{Lemma}
\newtheorem{proposition}[theorem]{Proposition}
\theoremstyle{remark}
\newtheorem{remark}[theorem]{\bf Remark}
\newtheorem{definition}[theorem]{\bf Definition}
\newtheorem*{speculation}{\bf Conclusion}
\numberwithin{theorem}{section}
\numberwithin{corollary}{section}
\numberwithin{definition}{section}
\numberwithin{lemma}{section}
\numberwithin{conjecture}{section}
\numberwithin{proposition}{section}
\numberwithin{question}{section}
\numberwithin{remark}{section}
\numberwithin{figure}{section}
\numberwithin{equation}{section}
\begin{document}

\title{Uniform Spanning Tree in Topological Polygons, \\
Partition Functions for SLE(8), and \\
Correlations in $c=-2$ Logarithmic CFT}
\bigskip{}
\author[1,3]{Mingchang Liu\thanks{liumc\_prob@163.com}}
\author[2]{Eveliina Peltola\thanks{eveliina.peltola@hcm.uni-bonn.de}}
\author[3]{Hao Wu\thanks{hao.wu.proba@gmail.com.}}
\affil[1]{KTH Royal Institute of Technology, Sweden}
\affil[2]{Aalto University, Finland, and University of Bonn, Germany}
\affil[3]{Tsinghua University, China}

\date{}

%
%

\global\long\def\CR{\mathrm{CR}}
\global\long\def\ST{\mathrm{ST}}
\global\long\def\SF{{}_2\mathrm{SF}}
\global\long\def\cov{\mathrm{cov}}
\global\long\def\dist{\mathrm{dist}}
\global\long\def\SLE{\mathrm{SLE}}
\global\long\def\hSLE{\mathrm{hSLE}}
\global\long\def\CLE{\mathrm{CLE}}
\global\long\def\GFF{\mathrm{GFF}}
\global\long\def\inte{\mathrm{int}}
\global\long\def\ext{\mathrm{ext}}
\global\long\def\inrad{\mathrm{inrad}}
\global\long\def\outrad{\mathrm{outrad}}
\global\long\def\dimH{\mathrm{dim}}
\global\long\def\capa{\mathrm{cap}}
\global\long\def\diam{\mathrm{diam}}
\global\long\def\free{\mathrm{free}}
\global\long\def\hF{{}_2\mathrm{F}_1}
\global\long\def\simple{\mathrm{simple}}
\global\long\def\st{\mathrm{ST}}
\global\long\def\ust{\mathrm{UST}}
\global\long\def\usf{\mathrm{USF}}
\global\long\def\Leb{\mathrm{Leb}}
\global\long\def\LP{\mathrm{LP}}

\global\long\def\eps{\epsilon}
\global\long\def\ov{\overline}
\global\long\def\U{\mathbb{U}}
\global\long\def\T{\mathbb{T}}
\global\long\def\HH{\mathbb{H}}
\global\long\def\LA{\mathcal{A}}
\global\long\def\LB{\mathcal{B}}
\global\long\def\LC{\mathcal{C}}
\global\long\def\LD{\mathcal{D}}
\global\long\def\LF{\mathcal{F}}
\global\long\def\LK{\mathcal{K}}
\global\long\def\LE{\mathcal{E}}
\global\long\def\LG{\mathcal{G}}
\global\long\def\LI{\mathcal{I}}
\global\long\def\LL{\mathcal{L}}
\global\long\def\LM{\mathcal{M}}
\global\long\def\LQ{\mathcal{Q}}
\global\long\def\LR{\mathcal{R}}
\global\long\def\LT{\mathcal{T}}
\global\long\def\LS{\mathcal{S}}
\global\long\def\LU{\mathcal{U}}
\global\long\def\LV{\mathcal{V}}
\global\long\def\LX{\mathcal{X}}
\global\long\def\PartF{\mathcal{Z}}
\global\long\def\LH{\mathcal{H}}
\global\long\def\R{\mathbb{R}}
\global\long\def\C{\mathbb{C}}
\global\long\def\N{\mathbb{N}}
\global\long\def\Z{\mathbb{Z}}
\global\long\def\E{\mathbb{E}}
\global\long\def\PP{\mathbb{P}}
\global\long\def\QQ{\mathbb{Q}}
\global\long\def\A{\mathbb{A}}
\global\long\def\one{\mathbb{1}}
\global\long\def\bn{\mathbf{n}}
\global\long\def\MR{MR}
\global\long\def\cond{\,|\,}
\global\long\def\la{\langle}
\global\long\def\ra{\rangle}
\global\long\def\tree{\Upsilon}
\global\long\def\forest{\Xi}

\global\long\def\sf{{}_2\mathrm{SF}}
\global\long\def\wr{\varrho}

\global\long\def\Im{\operatorname{Im}}
\global\long\def\Re{\operatorname{Re}}

\global\long\def\ud{\mathrm{d}}
\global\long\def\pder#1{\frac{\partial}{\partial#1}}
\global\long\def\pdder#1{\frac{\partial^{2}}{\partial#1^{2}}}
\global\long\def\der#1{\frac{\ud}{\ud#1}}

\global\long\def\bZnn{\mathbb{Z}_{\geq 0}}
\global\long\def\bZpos{\mathbb{Z}_{> 0}}

\global\long\def\Vfunc{\LG}
\global\long\def\gfunc{g^{(\rr)}}
\global\long\def\hfunc{h^{(\rr)}}

\global\long\def\wfunc{\aleph_{\scaleobj{0.75}{+}}}
\global\long\def\wfuncnobranch{\aleph}

\global\long\def\SimplexInt{\rho}
\global\long\def\CubeInt{\widetilde{\rho}}

\global\long\def\ii{\mathfrak{i}}
\global\long\def\rr{\mathfrak{r}}
\global\long\def\chamber{\mathfrak{X}}
\global\long\def\Wchamber{\mathfrak{W}}

\global\long\def\SimplexIntKappa8{\SimplexInt}

\global\long\def\nested{\boldsymbol{\underline{\Cap}}}
\global\long\def\unnested{\boldsymbol{\underline{\cap\cap}}}

\global\long\def\acycle{\vartheta}
\global\long\def\bcycle{\tilde{\acycle}}
\global\long\def\Gloop{\varrho}
\global\long\def\GGloop{\mathcal{P}}

\global\long\def\metric{\mathrm{dist}}

\global\long\def\adj#1{\mathrm{adj}(#1)}

\newcommand{\conn}{\vartheta_{\mathrm{UST}}}
\newcommand{\grconn}{\vartheta_{\mathrm{GRV}}}
\newcommand{\curveSpace}{X}
\newcommand{\Qintegral}{\mathcal{I}}

\maketitle

\begin{center}
\begin{minipage}{0.96\textwidth}
\abstract{
We find explicit SLE($8$) partition functions for the scaling limits of Peano curves
in the uniform spanning tree (UST) in topological polygons with general boundary conditions.
They are given in terms of Coulomb gas integral formulas,
which can also be expressed in terms of determinants involving $a$-periods of a hyperelliptic Riemann surface.
We also identify the crossing probabilities for the UST Peano curves as ratios of these partition functions.

The partition functions are interpreted as correlation functions in a logarithmic conformal field theory (log-CFT) of central charge $c=\! -2$.
Indeed, it is clear from our results that this theory is not a minimal model and exhibits logarithmic phenomena — the limit functions
have logarithmic asymptotic behavior, that we calculate explicitly.
General fusion rules for them could also be inferred from the explicit formulas.
The discovered algebraic structure matches the known Virasoro staggered module classification,
so in this sense, we give a direct probabilistic construction for correlation functions in a log-CFT of central charge $\! -2$ describing the UST model.
}

\bigskip{}

\noindent\textbf{Keywords:} 
(logarithmic) conformal field theory (CFT), correlation function, crossing probability, uniform spanning tree (UST), partition function, Schramm-Loewner evolution (SLE) \\ 

\noindent\textbf{MSC:} 82B20, 60J67, 60K35
\end{minipage}
\end{center}

\newpage

\setcounter{tocdepth}{2}
\tableofcontents

\newpage
\allowdisplaybreaks


\smallskip{}
\section{Introduction}

Random polymer models provide a plethora of interesting phenomena in statistical physics, probability theory,  
and related fields. For instance, many of them show features of criticality 
conjecturally related to conformal invariance. 
A particularly favorable setup for exact and rigorous results is the case of \emph{planar} models.
Indeed, one of the first examples where a conformal invariance result  for a critical model was rigorously verified was 
the planar \emph{uniform spanning tree} (UST):  Lawler, Schramm \&~Werner showed~\cite{LawlerSchrammWernerLERWUST} 
that the random curve traversing along a uniformly chosen spanning tree 
(hereafter referred to as the UST \emph{Peano curve}), 
which can also be thought of as a critical dense polymer, 
converges in the scaling limit to a \emph{Schramm-Loewner evolution} process, $\SLE_\kappa$ with $\kappa = 8$.
In fact, the UST is quite a fruitful model, as it connects to other important models in various ways. For instance, the UST can be generated by loop-erased random walks via Wilson's algorithm~\cite{WilsonUSTLERW}, 
and the ``branch-winding height functions''
of the UST have the Gaussian free field as a scaling limit~\cite{KenyonDominosGFF}. 
This observation also leads to an analogue in the continuum: $\SLE_8$ curves can be coupled with the Gaussian free field as flow lines~\cite{DubedatSLEFreefield, MillerSheffieldIG4, BerestyckiLaslierRayDimersIG}. 
However, even though 
substantial evidence for conformal invariance has been obtained for the UST model,  
there is still no clear conjecture on the full \emph{conformal field theory} (CFT) describing its scaling limit.

In (heuristic) CFT parlance, the planar UST model should be described by some CFT with central charge $c=-2$, 
which remarkably is a \emph{non-unitary} theory --- 
unlike many well-known ones such as the Liouville CFT~\cite{KRV1} 
or the minimal model for the Ising model~\cite{CHI:Ising_CFT}. 
In the non-unitary case there can even exist several theories with the same central charge and conformal weights. 
Physicists have proposed various descriptions for 
a $c=-2$ theory (e.g.,~\cite{Gurarie:log_CFT,
GK:Indecomposable_fusion_products, Kausch:Symplectic_fermions,  Pearce_Rasmussen:Solvable_critical_dense_polymers}), 
involving \emph{logarithmic} fields.
These are fields in the CFT with anomalous behavior, arising from the feature that the Virasoro dilation operator $\mathrm{L}_0$, 
which generates scalings of the physical space-time,
is not diagonalizable (in particular, the Hamiltonian of the theory is not self-adjoint). 
This implies that the space of states has a complicated structure as a representation of the Virasoro algebra, containing non-trivial Jordan blocks for $\mathrm{L}_0$. 
They, in turn, result in logarithmic divergences in the correlation functions of the theory (cf.~Section~\ref{subsec: CFT_intro}).
We will show in the present work
that the correlation functions have a clear probabilistic interpretation, which could be helpful in revealing the underlying complicated algebraic structures. 

\smallbreak

This article concerns a probabilistic model of 
UST Peano curves in (topological) polygons with various boundary conditions.
We obtain scaling limit results 
expressed in terms of explicit quantities --- determinantal expressions and Coulomb gas integrals. 
As a by-product of our results, we establish 
that any (boundary) CFT describing the UST model 
(once again, we say ``any'', as there is no consensus for what the CFT should exactly be) 
must contain fields whose correlation functions have logarithmic divergences.
Specifically, we show that the explicit scaling limit objects are \emph{conformally invariant or covariant}, 
satisfy \emph{BPZ PDEs} at level two, and we derive \emph{explicit fusion rules} in terms of asymptotic behavior (with structure constants also determined), 
that manifestly show the emergence of a \emph{logarithmic CFT} (log-CFT) structure. 
We believe that by providing (to our knowledge) the first mathematically 
rigorous probabilistic results towards a systematic 
log-CFT description of the scaling limit of the UST model, 
we also initiate the building of solid analytical foundations for such a theory, relevant in particular to random geometry and statistical physics. 

\subsection{Summary of main results}

Let us summarize more precisely the main findings of the present work. 
We will consider uniformly chosen spanning trees on subsets of the square lattice, 
focusing on the behavior of the random chordal Peano curves between the tree and its dual.
Specifically, we consider scaling limits of the UST model on polygons with an even number $2N$ of marked boundary points. Thus, we 
fix $N\ge 1$ and a \emph{polygon} $(\Omega; x_1, \ldots, x_{2N})$, that is, 
a bounded simply connected domain $\Omega\subset\C$ such that $\partial\Omega$ is locally connected,
together with distinct marked boundary points $x_1, \ldots, x_{2N}$ in counterclockwise order. 
We encode boundary conditions in \emph{link patterns} (i.e., planar/non-crossing pair partitions)
\begin{align} \label{eq: link pattern ordering}
\begin{split}
& \beta = \{ \{a_1,b_1\},  \{a_2,b_2\},\ldots , \{a_N,b_N\}\} \\
& \textnormal{with link endpoints ordered as } \; 
a_1 < a_2 < \cdots < a_N \textnormal{ and } a_r < b_r , \textnormal{ for all } 1 \leq r \leq N ,  \\
& \textnormal{and such that there are no indices } 1 \leq r , s \leq N \textnormal{ with } a_r < a_s < b_r < b_s , \end{split}
\end{align}
where $\{a_1, b_1,\ldots,  a_N, b_N  \} =  \{1,2,\ldots,2N\}$. 
For convenience, we have chosen a particular ordering of the endpoints of the links $\{a_r,b_r\}$.
We shall denote by $\LP_N \ni \beta$ the set of link patterns $\beta$ of $N$ links. 

\smallbreak

\noindent
Our main results can be summarized as follows:

\begin{itemize}[leftmargin=*]
\item (Theorem~\ref{thm::ust_general}): 
We identify the scaling limits of the UST Peano curves with any boundary condition~$\beta$. 
These are variants of the $\SLE_8$ process, whose partition functions 
(denoted $\LF_\beta$)
are determined by~$\beta$, arising naturally from the discrete holomorphic observable that we employ to derive this result. Let us remark that, 
compared to the earlier results~\cite{LawlerSchrammWernerLERWUST, DubedatEulerIntegralsCommutingSLEs, HanLiuWuUST}, 
the choice and analysis of the general observable is significantly more intricate, and the boundary conditions for it are non-trivial.

\smallbreak

\item (Theorem~\ref{thm::ust_crossing_proba}): 
We find the scaling limits of all crossing probabilities of the UST Peano curves 
as~ratios of  
$\LF_\beta$ and so-called $\SLE_8$ \emph{pure partition functions} $\PartF_\alpha$
(see~\cite{BBK:Multiple_SLEs_and_statistical_mechanics_martingales, DubedatCommutationSLE, Kozdron-Lawler:Configurational_measure_on_mutually_avoiding_SLEs, PeltolaCFTSLE} and references therein for cases with $\kappa < 8$). 
In contrast to earlier lattice-level results~\cite{KenyonWilsonBoundaryPartitionsTreesDimers}, 
we obtain a more complete description of the scaling limits, 
provide explicit formulas in terms of integrals of Coulomb gas type,
and interpret these probabilities in terms of operators in a $c=-2$ log-CFT. 
Interestingly, the Coulomb gas integrals can also be written in terms of $a$-periods of a suitable hyperelliptic Riemann surface (see Proposition~\ref{prop::two_Fs_as_determinants})
--- in particular, they have a determinantal  structure.

\smallbreak

\item (Theorems~\ref{thm::coulombgasintegral}~\&~\ref{thm::ppf}): 
We prove salient properties of the partition functions, some of them 
predicted from conformal field theory~\cite{BPZ:Infinite_conformal_symmetry_in_2D_QFT, Gurarie:log_CFT,
Pearce_Rasmussen:Solvable_critical_dense_polymers},
and others crucial for their probabilistic meaning. 
\end{itemize}

Even though the log-CFT interpretation is rather heuristic, from the formal, algebraic viewpoint 
(in terms of the representation theory of the Virasoro algebra),
we thus provide evidence that any CFT describing 
the planar UST model in the scaling limit must be a non-unitary, logarithmic CFT 
containing specific operator content (cf.~Section~\ref{subsec: CFT_intro}). 
Our results furthermore provide a \emph{probabilistic} construction for such a CFT in terms of correlation functions.

\subsection{$\SLE(8)$ observables: Coulomb gas integrals and pure partition functions}
\label{subsec: Coulomb_intro}

In order to state our scaling limit results, we first describe the objects that determine the scaling limit.
We recommend readers only interested in the scaling limit results per se to first glance at Section~\ref{subsec: UST_intro}
and then return to the present Section~\ref{subsec: Coulomb_intro}. 
However, let us point out that the results of the present section 
greatly differ from the previously considered cases of the critical Ising model ($\kappa=3$)~\cite{IzyurovObservableFree, PeltolaWuCrossingProbaIsing},
multiple LERW ($\kappa=2$)~\cite{KenyonWilsonBoundaryPartitionsTreesDimers, KarrilaUSTBranches, KarrilaKytolaPeltolaCorrelationsLERWUST}, 
critical percolation ($\kappa=6$)~\cite{DubedatEulerIntegralsCommutingSLEs}, 
and critical random-cluster models ($\kappa=16/3$)~\cite{IzyurovMultipleFKIsing, FPW22}. 
Indeed, when $\kappa < 8$, for instance the $\SLE_\kappa$ pure partition functions can be uniquely classified as solutions to a certain PDE boundary value problem~\cite{FloresKlebanPDE, PeltolaWuGlobalMultipleSLEs}, 
whereas in the present case of $\kappa=8$, no classification is known and the techniques of~\cite{FloresKlebanPDE, PeltolaWuGlobalMultipleSLEs} fail.
Furthermore, while the Kac conformal weights $h_{1,1}(\kappa)$ and $h_{1,3}(\kappa)$ for $\kappa < 8$ in Equation~\eqref{eq: Kac weights} determine unambiguously the asymptotic boundary conditions for such a PDE boundary value problem, giving rise to two \emph{distinct} Frobenius exponents,
these exponents coincide when $\kappa = 8$. 
This seemingly innocent property lies at the heart of the logarithmic phenomena in the CFT describing the scaling limit of the UST model.  

\smallbreak

For each $\beta \in \LP_N$ as in~\eqref{eq: link pattern ordering}, we define $\LF_{\beta} \colon  \chamber_{2N} \to \C$, where 
\begin{align*}
\chamber_{2N} := \big\{ \boldsymbol{x} = (x_{1},\ldots,x_{2N}) \in \R^{2N} \colon x_{1} < \cdots < x_{2N} \big\} ,
\end{align*}
to be a ``Coulomb gas integral function'' (for $\kappa=8$) 
\begin{align}
\label{eqn::coulombgasintegral}
\LF_\beta (\boldsymbol{x}) :=  \; &
\prod_{1\leq i<j\leq 2N}(x_{j}-x_{i})^{1/4} 
\landupint_{x_{a_1}}^{x_{b_1}} 
\cdots \landupint_{x_{a_N}}^{x_{b_N}}
\prod_{1\leq r<s\leq N}(u_{s}-u_{r}) 
\; \prod_{r=1}^{N}
\frac{\ud u_r}{\prod_{k=1}^{2N} (u_{r}-x_{k})^{1/2}} ,
\end{align}
where the branch of the multivalued integrand is chosen to be real and positive when
\begin{align*}
x_{a_r} < u_r < x_{a_r+1} , \quad \textnormal{ for all } r \in \{1,2,\ldots N \} ,
\end{align*}
and the integration in~\eqref{eqn::coulombgasintegral} is understood so that  
the integration variables $u_r$ avoid the ramification points $x_1, x_2, \ldots, x_{2N}$ by encircling them from the upper half-plane (see Section~\ref{sec::coulombgasintegrals_new}). 
With such a branch choice, $\LF_{\beta}$ actually takes values in $(0,\infty)$,
see Theorem~\ref{thm::coulombgasintegral}.
This fact is crucial for the probabilistic interpretation of these functions --- however, it is not obvious. 

Quite a specific feature to the present case of $\kappa=8$ is that the function~\eqref{eqn::coulombgasintegral} also equals, up to a multiplicative factor, 
an integral of the Vandermonde determinant: 
\begin{align} \label{eq: VandermondeF}
\LF_\beta (\boldsymbol{x}) =  \; & \prod_{1\leq i<j\leq 2N}(x_{j}-x_{i})^{1/4} \times 
|\det P_{\beta}(\boldsymbol{x})| , \\
\nonumber
\textnormal{where} \qquad 
P_{\beta}(\boldsymbol{x}) := \; & \Big( \landupint_{x_{a_r}}^{x_{b_r}} \frac{u^{ s-1 }\ud u}{\prod_{j=1}^{2N}(u-x_{j})^{1/2}}   \Big)_{r,s=1}^{N} , \qquad \boldsymbol{x}\in\chamber_{2N} ,
\end{align}
is a matrix which can also be written in terms of $a$-periods of a suitable hyperelliptic Riemann surface --- see Proposition~\ref{prop::two_Fs_as_determinants} and Equation~(\ref{eq::relate_A_and_P},~\ref{eq::relate_det_and_circle}). 
Such a determinantal structure can be viewed as a feature of the \emph{fermionic} nature of the UST model (CFT with $c=-2$).

Historically, these Coulomb gas integrals stem from conformal field theory~\cite{DF-multipoint_correlation_functions, DubedatEulerIntegralsCommutingSLEs, KytolaPeltolaConformalCovBoundaryCorrelation}, where they have been used as a general ansatz to find formulas for correlation functions. Specifically to our case, 
we seek correlation functions of so-called degenerate fields, 
which should satisfy a system of ``BPZ PDEs'' attributed to 
Belavin, Polyakov \&~Zamolodchikov~\cite{BPZ:Infinite_conformal_symmetry_in_2D_QFT},
\begin{align}\label{eqn::USTPDE}
\tag{\textnormal{PDE}}
\bigg[ 
4 \pdder{x_{j}}
+ \sum_{i \neq j} \Big( \frac{2}{x_{i}-x_{j}} \pder{x_{i}} 
+ \frac{1/4}{(x_{i}-x_{j})^{2}} \Big) \bigg]
\LF(x_1,\ldots,x_{2N}) =  0 , \quad \textnormal{for all } j \in \{1,2,\ldots,2N\} ,
\end{align}
and the specific covariance property 
\begin{align}\label{eqn::USTCOV}
\tag{\textnormal{COV}}
\LF(x_{1},\ldots,x_{2N})  = 
\prod_{i=1}^{2N} \varphi'(x_{i})^{-1/8} 
\times \LF(\varphi(x_{1}),\ldots,\varphi(x_{2N})) ,
\end{align}
for all M\"obius maps $\varphi$ of the upper half-plane 
$\HH := \{ z \in \C \colon \Im(z) > 0 \}$ 
such that $\varphi(x_{1}) < \cdots < \varphi(x_{2N})$.

\begin{theorem} \label{thm::coulombgasintegral}
The functions $\LF_{\beta}$ defined in~\eqref{eqn::coulombgasintegral} satisfy
the PDE system~\eqref{eqn::USTPDE}, M\"obius covariance~\eqref{eqn::USTCOV}, and the following further properties. 
\begin{itemize}
\item[\textnormal{(POS)}] 
\textnormal{\bf Positivity:} 
For each $N \ge 1$ and $\beta\in\LP_N$, we have $\LF_{\beta}(\boldsymbol{x})>0$, for all $\boldsymbol{x}\in\chamber_{2N}$.

\smallbreak

\item[\textnormal{(ASY)}] 
\textnormal{\bf Asymptotics:} 
With $\LF_{\emptyset} \equiv 1$ for the empty link pattern $\emptyset \in \LP_0$, the collection $\{\LF_{\beta} \colon \beta\in\LP_N\}$ satisfies the following recursive asymptotics property.
Fix $N \ge 1$ and $j \in \{1,2, \ldots, 2N-1 \}$. 
Then, we have\footnote{Throughout, we use the cyclic indexing convention $x_{2N+1} := x_{1}$ etc.}
\begin{align}
\label{eqn::USTASY1} 
\tag{$\LF_{\beta}$\textnormal{-ASY}$_{1,1}$}
\; & \lim_{x_j,x_{j+1}\to\xi} \frac{\LF_{\beta}(\boldsymbol{x})}{ (x_{j+1}-x_j)^{1/4} }
= \pi \, \LF_{\beta/\{j, j+1\}}(\boldsymbol{\ddot{x}}_j) , 
& \textnormal{if }\{j, j+1\}\in\beta , \\
\label{eqn::USTASY2}
\tag{$\LF_{\beta}$\textnormal{-ASY}$_{1,3}$}
\; & \lim_{x_j , x_{j+1} \to \xi} \frac{\LF_{\beta}(\boldsymbol{x})}{ (x_{j+1} - x_j)^{1/4} |\log(x_{j+1}-x_j)| } 
= \LF_{\wp_j(\beta)/\{j,j+1\}}(\boldsymbol{\ddot{x}}_j) , 
& \textnormal{if }\{j, j+1\} \not\in \beta ,
\end{align}
where
\begin{align} \label{eqn::bs_notation}
\begin{split}
\boldsymbol{x} = \; & (x_1, \ldots, x_{2N}) \in \chamber_{2N} , 
\\
\boldsymbol{\ddot{x}}_j = \; & (x_1, \ldots, x_{j-1}, x_{j+2}, \ldots, x_{2N}) \in \chamber_{2N-2} ,
\end{split}
\end{align}
and $\xi \in (x_{j-1}, x_{j+2})$, and 
where $\beta/\{j,j+1\} \in \LP_{N-1}$ denotes the link pattern obtained from $\beta$ by removing the link $\{j,j+1\}$ and relabeling the remaining indices by $1, 2, \ldots, 2N-2$, 
and $\wp_j$  
is the ``tying operation'' defined by 
\begin{align*}
\wp_j \colon \LP_N\to \LP_N , \quad
\wp_j(\beta) = 
\big(\beta\setminus(\{j,\ell_1\}, \{j+1, \ell_2\})\big)\cup \{j,j+1\}\cup \{\ell_1, \ell_2\} , 
\end{align*}  
where $\ell_1$ \textnormal{(}resp.~$\ell_2$\textnormal{)} 
is the pair of $j$ \textnormal{(}resp.~$j+1$\textnormal{)} in $\beta$ \textnormal{(}and $\{j,\ell_1\}, \{j+1, \ell_2\}, \{\ell_1, \ell_2\}$ are unordered\textnormal{)}.
\begin{align*}
\vcenter{\hbox{\includegraphics[scale=0.25]{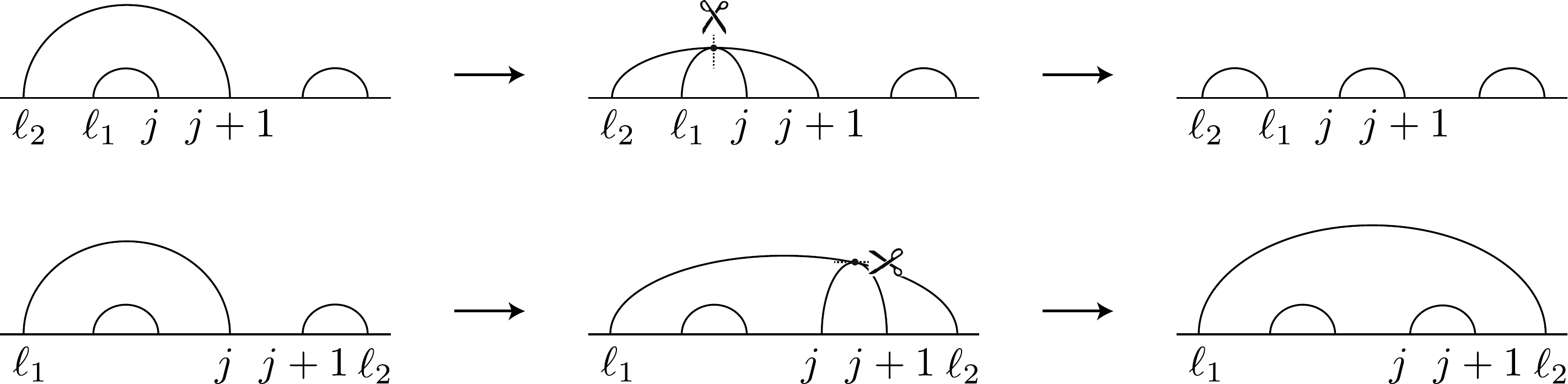}}} 
\end{align*}

\smallbreak

\item[\textnormal{(LIN)}] 
\textnormal{\bf Linear independence:} 
The functions $\{\LF_{\beta} \colon \beta\in\LP_N\}$ are linearly independent. 
\end{itemize}
\end{theorem}

In short, the proof of Theorem~\ref{thm::coulombgasintegral} comprises 
Proposition~\ref{prop: full Mobius covariance F},
Proposition~\ref{prop: PDEs F} (or Corollary~\ref{cor::coulombgasintegralPDE}), 
Proposition~\ref{prop: Asymptotics F}, 
Proposition~\ref{prop: positivity},
and the conclusion in Section~\ref{subsec::ppf_concluding}.
\begin{itemize}[leftmargin=*]
\item The BPZ PDE system~\eqref{eqn::USTPDE} can be verified by the following 
argument for the Coulomb gas integrals. The integration contours in~\eqref{eqn::coulombgasintegral} can be written as
a closed surface in a suitable homology, and the integrand after being hit by the differential operator $\mathcal{D}^{(j)}$ in~\eqref{eqn::USTPDE} 
gives an exact form. 
Therefore, Stokes' theorem can be used (with care) to argue that~\eqref{eqn::coulombgasintegral} is a solution to~\eqref{eqn::USTPDE}. We perform this argument in 
Proposition~\ref{prop: PDEs F}. The strategy is explained in detail in~\cite[Section~1]{KytolaPeltolaConformalCovBoundaryCorrelation}\footnote{The results in~\cite{KytolaPeltolaConformalCovBoundaryCorrelation} concern the case of irrational values of $\kappa$, but the same idea for this part also works for rational $\kappa$.}. 

\smallbreak

\item The M\"obius covariance~\eqref{eqn::USTCOV} is immediate for translations and scalings, but the verification of it for special conformal transformations is surprisingly troublesome.
In our special case where $\kappa=8$, the determinantal structure~\eqref{eq: VandermondeF} of $\LF_\beta$ is helpful 
(cf.~Proposition~\ref{prop: full Mobius covariance F}).

\smallbreak

\item In general, it is very difficult to establish positivity \textnormal{(POS)} for Coulomb gas type integrals\footnote{For $\kappa \leq 6$, positivity results have been established via a construction of $\SLE_\kappa$ partition functions explicitly in terms of probabilistic quantities, such as the Brownian loop measure and multiple $\SLE_\kappa$~\cite{Kozdron-Lawler:Configurational_measure_on_mutually_avoiding_SLEs, LawlerPartitionFunctionsSLE, PeltolaWuGlobalMultipleSLEs, WuHyperSLE}.}. 
Once again, in our special case the positivity is guaranteed by the determinantal structure, see Proposition~\ref{prop: positivity}. 
This property is absolutely essential for the probabilistic interpretation and usage of $\LF_\beta$ for the scaling limit results stated below in Section~\ref{subsec: UST_intro}; 
indeed, a priori $\LF_\beta$ are complicated complex valued functions expressed as iterated integrals~\eqref{eqn::coulombgasintegral}.

\smallbreak

\item Of the asymptotics properties in \textnormal{(ASY)},
the generic one,~\eqref{eqn::USTASY1}, is very easy to verify (Lemma~\ref{lem: Asymptotics1 A}), whereas the logarithmic one,~\eqref{eqn::USTASY2}, needs more detailed analysis (Lemma~\ref{lem: Asymptotics2 A}). 
These asymptotics properties are motivated by fusion rules in log-CFT, 
that in particular involve the logarithmic correction in one of the fusion channels. See Section~\ref{subsec: CFT_intro} for discussion and literature.

\smallbreak

\item Lastly, the linear independence of the functions $\LF_\beta$ is a consequence of their different asymptotic properties, as explained in the end of Section~\ref{subsec::ppf_concluding}.
\end{itemize}

Theorem~\ref{thm::coulombgasintegral} shows that 
$\{\LF_{\beta} \colon \beta\in\LP_N\}$ forms a basis for a solution space\footnote{It is not known to us what the dimension of the full solution space to the PDE system~\eqref{eqn::USTPDE} is. However, it is plausible that imposing the additional constraint~\eqref{eqn::USTCOV} and possibly a bound for the growth of the solutions analogous to~\cite[Part~1,~Eq.~(20)]{FloresKlebanPDE}, the dimension of the restricted solution space would equal the Catalan number
$\frac{1}{N+1} \binom{2N}{N} = |\LP_N|$.
}
of the PDE system~\eqref{eqn::USTPDE}. 
There is another useful basis consisting of \emph{pure partition functions} $\{\PartF_{\alpha} \colon  \alpha \in \LP_N\}$.
To explain how these two bases are related, let us recall that a \emph{meander} formed from two link patterns $\alpha, \beta\in \LP_N$ is the planar diagram obtained by placing $\alpha$ and the horizontal reflection of $\beta$ on top of each other\footnote{Some authors call these ``meandric systems.''}:
\begin{align} \label{eqn::meander_example}
\alpha \quad = \quad \vcenter{\hbox{\includegraphics[scale=0.275]{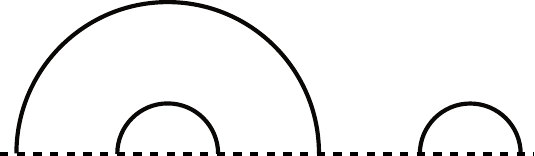}}} 
\quad  , \quad  
\beta \quad = \quad\vcenter{\hbox{\includegraphics[scale=0.275]{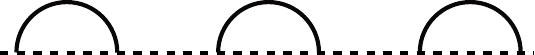}}} 
\quad\quad \Longrightarrow \quad\quad
\vcenter{\hbox{\includegraphics[scale=0.275]{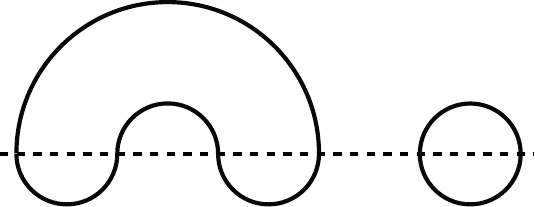}}} 
\end{align}
We define the (renormalized, symmetric) \emph{meander matrix} entries $\{\LM_{\alpha, \beta} \colon \alpha, \beta\in\LP_N\}$ as
\begin{align} \label{eqn::renormalized_meander_matrix}
\LM_{\alpha, \beta} :=
\begin{cases}
1, &\textnormal{if the meander formed from $\alpha$ and $\beta$ has one loop,}\\
0,&\textnormal{otherwise.}
\end{cases}
\end{align}
We also set $\LM_{\emptyset, \emptyset} \equiv 1$ by convention.
By~\cite[Eq.~(5.18)]{FrancescoGolinelliGuitterMeanders}, 
the 
meander matrix~\eqref{eqn::renormalized_meander_matrix} is invertible. 
We then define the pure partition functions 
$\PartF_{\alpha} \colon \chamber_{2N}\to \C$ as
\begin{align}\label{eqn::ppf_def}
\PartF_{\alpha} (\boldsymbol{x}) := \sum_{\beta \in \LP_N} \LM_{\alpha, \beta}^{-1} \, \LF_{\beta} (\boldsymbol{x})
 , \qquad \boldsymbol{x}\in\chamber_{2N} .  
\end{align}

\begin{theorem} \label{thm::ppf}
The pure partition functions defined in~\eqref{eqn::ppf_def} satisfy
the PDE system~\eqref{eqn::USTPDE}, M\"obius covariance~\eqref{eqn::USTCOV}, and the following further properties.
\begin{itemize}
\item[\textnormal{(POS)}] 
\textnormal{\bf Positivity:} 
For each $N \ge 1$ and $\beta\in\LP_N$, we have $\PartF_{\alpha}(\boldsymbol{x})>0$, for all $\boldsymbol{x}\in\chamber_{2N}$.

\smallbreak

\item[\textnormal{(ASY)}] 
\textnormal{\bf Asymptotics:} 
With $\PartF_{\emptyset} \equiv 1$ for the empty link pattern $\emptyset \in \LP_0$, the collection $\{\PartF_{\alpha} \colon \alpha\in\LP_N\}$ satisfies the following recursive asymptotics property.
Fix $N \ge 1$ and $j \in \{1,2, \ldots, 2N-1 \}$.
Then, for all $\xi \in (x_{j-1}, x_{j+2})$, using the notation~\eqref{eqn::bs_notation}, we have
\begin{align}
\label{eqn::ppf_asy1}
\tag{$\PartF_{\alpha}$\textnormal{-ASY}$_{1,1}$}
\; & \lim_{x_j , x_{j+1} \to \xi} 
\frac{\PartF_{\alpha}(\boldsymbol{x})}{(x_{j+1} - x_j)^{1/4} |\log(x_{j+1}-x_j)|} 
= \PartF_{\alpha/\{j,j+1\}}(\boldsymbol{\ddot{x}}_j),  &\quad \textnormal{if } \{j, j+1\}\in\alpha ,\\
\label{eqn::ppf_asy2}
\tag{$\PartF_{\alpha}$\textnormal{-ASY}$_{1,3}$}
\; & \lim_{x_j , x_{j+1} \to \xi} 
\frac{\PartF_{\alpha}(\boldsymbol{x})}{(x_{j+1} - x_j)^{1/4}} 
= \pi \, \PartF_{\wp_j(\alpha)/\{j,j+1\}}(\boldsymbol{\ddot{x}}_j), & \quad \textnormal{if }\{j,j+1\}\not\in\alpha .
\end{align}

\smallbreak

\item[\textnormal{(LIN)}] 
\textnormal{\bf Linear independence:} 
The functions $\{\PartF_{\alpha} \colon \alpha \in\LP_N\}$ are linearly independent. 
\end{itemize}
\end{theorem}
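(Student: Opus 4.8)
The plan is to deduce Theorem~\ref{thm::ppf} from Theorem~\ref{thm::coulombgasintegral} by a change of basis, using the meander matrix~\eqref{eqn::renormalized_meander_matrix} as the transformation. Since $\PartF_\alpha = \sum_\beta \LM^{-1}_{\alpha,\beta}\,\LF_\beta$ is a (finite, constant-coefficient) linear combination of the $\LF_\beta$, and both the PDE system~\eqref{eqn::USTPDE} and the M\"obius covariance~\eqref{eqn::USTCOV} are linear in $\LF$, the $\PartF_\alpha$ automatically inherit both of these properties from Theorem~\ref{thm::coulombgasintegral}. Linear independence (LIN) is also immediate: the change of basis is by an invertible matrix (the meander matrix is invertible by~\cite[Equation~(5.18)]{FrancescoGolinelliGuitterMeanders}), so $\{\PartF_\alpha\}$ spans the same space as $\{\LF_\beta\}$ and has the same cardinality $\#\LP_N$, hence is linearly independent. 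Alternatively, LIN will fall out of (ASY) by the same argument as for the $\LF_\beta$.

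The substantive content is to establish the asymptotics (ASY) and the positivity (POS) for $\PartF_\alpha$. For (ASY), the strategy is to feed the asymptotics~\eqref{eqn::USTASY1}--\eqref{eqn::USTASY2} of the $\LF_\beta$ through the defining relation~\eqref{eqn::ppf_def} and track how the meander matrix interacts with the link-removal operation $\beta \mapsto \beta/\{j,j+1\}$ and the tying operation $\wp_j$. The key combinatorial fact to prove is a compatibility identity between the meander matrices at level $N$ and level $N-1$: roughly, $\sum_\beta \LM^{-1}_{\alpha,\beta}\,(\text{coefficient of }\LF_{\beta/\{j,j+1\}}\text{ or }\LF_{\wp_j(\beta)/\{j,j+1\}})$ should collapse to $\LM^{-1}_{\alpha/\{j,j+1\},\,\gamma}$ or a single term $\mathbb{1}[\{j,j+1\}\in\alpha]$ as appropriate. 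Concretely, one splits the sum over $\beta$ according to whether $\{j,j+1\}\in\beta$ or not; the first group contributes via~\eqref{eqn::USTASY1} (no log, factor $\pi$) and the second via~\eqref{eqn::USTASY2} (with log). Matching this against the claimed~\eqref{eqn::ppf_asy1}--\eqref{eqn::ppf_asy2} (note the roles of "log/no log" and "$\pi$/no $\pi$" are swapped between the $\LF$ and $\PartF$ bases, which is the expected shadow of the fusion rules) requires showing that the dominant term in each regime is governed by a recursion on meander matrices. I expect the cleanest route is to verify the inverse meander matrix recursion: that the $(N-1)$-level inverse meander matrix is obtained from the $N$-level one by a specific insertion/contraction at positions $j,j+1$, which is a statement purely about the loop-counting combinatorics of meanders and should follow by analyzing how adding a small arc $\{j,j+1\}$ to $\beta$ changes the number of loops when paired against $\alpha$. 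This combinatorial recursion is the main obstacle, and it is likely the technical heart of the section proving Theorem~\ref{thm::ppf}; once it is in place, (ASY) follows by dominated-term extraction.

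For (POS), the plan is \emph{not} to use~\eqref{eqn::ppf_def} directly --- the entries of $\LM^{-1}$ have mixed signs, so positivity of $\PartF_\alpha$ does not follow from positivity of $\LF_\beta$ by inspection. Instead I would argue by induction on $N$ using the asymptotics (ASY) just established, together with a boundary-maximum or Green's-function-type argument for the PDE system, exactly in the spirit of the classification arguments of~\cite{FloresKlebanPDE, PeltolaWuGlobalMultipleSLEs} (adapted since, as the introduction warns, no full classification is available at $\kappa=8$). The base case $N=1$ is the explicit hypergeometric computation (cf.~\cite[Equation~(3.9)]{HanLiuWuUST}). For the inductive step, one knows $\PartF_\alpha$ is smooth on $\chamber_{2N}$, satisfies~\eqref{eqn::USTPDE}, and by (ASY) (with the inductive hypothesis that the $(N-1)$-level functions are positive) has non-negative boundary behaviour as adjacent points collide; a second-order-operator maximum principle argument, or the probabilistic representation via the scaling-limit curves and $\SLE_8$ partition functions from Theorem~\ref{thm::ust_general}, then forces $\PartF_\alpha > 0$ throughout. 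Since the scaling limit identification (Theorem~\ref{thm::ust_general}) expresses crossing probabilities as ratios of these partition functions and such ratios are genuine probabilities in $(0,1)$, that route gives positivity rather directly; I would organize the final argument around whichever of these (PDE maximum principle vs. probabilistic representation) is cleaner given the dependencies, and I expect the probabilistic one to be the path of least resistance.
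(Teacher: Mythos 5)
Your treatment of the PDE system, M\"obius covariance, and (LIN) matches the paper exactly (linearity of the change of basis, invertibility of the meander matrix), and your fallback of deriving (LIN) from (ASY) is in fact the route the paper takes at the end of Section~\ref{subsec::ppf_concluding}, via the iterated-limit operator $\mathrm{Lim}_\alpha$. Your second proposal for (POS) --- going through the probabilistic representation $\PartF_\alpha = p^\alpha_\beta\,\LF_\beta$, with $p^\alpha_\beta$ a genuine scaling-limit crossing probability in $(0,1)$ --- is likewise precisely what the paper does.

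Where you diverge is (ASY). You propose a purely combinatorial route: feed~\eqref{eqn::USTASY1}--\eqref{eqn::USTASY2} through~\eqref{eqn::ppf_def} and prove a recursion on $\LM^{-1}$ compatible with link removal and tying. The paper does \emph{not} go this way; it establishes (ASY) for $\PartF_\alpha$ in Corollaries~\ref{cor::ppf_asy1}--\ref{cor::ppf_asy2} by first proving the corresponding asymptotics for the crossing probabilities $p^\alpha_\beta$ (which rests on a Beurling estimate for the discrete model and the Carath\'eodory-continuity of $p^\alpha_\beta$, Proposition~\ref{prop::proba_limit}), and then multiplying by the known asymptotics of $\LF_\beta$ through the identity $\PartF_\alpha = p^\alpha_\beta\,\LF_\beta$. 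This avoids any meander-matrix combinatorics entirely.

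The gap in your proposal is that the combinatorial identity you point to is not only unproved but is actually two separate and non-obvious claims, and you do not flag the harder one. Splitting the sum $\PartF_\alpha = \sum_\beta \LM^{-1}_{\alpha,\beta}\,\LF_\beta$ according to whether $\{j,j+1\}\in\beta$, the $\beta$ with $\{j,j+1\}\notin\beta$ each contribute a term of order $(x_{j+1}-x_j)^{1/4}|\log(x_{j+1}-x_j)|$. When $\{j,j+1\}\notin\alpha$, the asserted limit~\eqref{eqn::ppf_asy2} is finite at the scale $(x_{j+1}-x_j)^{1/4}$ with no $\log$. So you would need an exact cancellation
\begin{align*}
\sum_{\substack{\beta\in\LP_N \\ \{j,j+1\}\notin\beta,\;\hat\wp_j(\beta)=\gamma}} \LM^{-1}_{\alpha,\beta} \;=\; 0
\qquad \text{for every }\gamma\in\LP_{N-1},\ \text{whenever }\{j,j+1\}\notin\alpha,
\end{align*}
on top of the collapse identity
\begin{align*}
\sum_{\substack{\beta\in\LP_N \\ \{j,j+1\}\notin\beta,\;\hat\wp_j(\beta)=\gamma}} \LM^{-1}_{\alpha,\beta} \;=\; \LM^{-1}_{\alpha/\{j,j+1\},\,\gamma}
\qquad \text{whenever }\{j,j+1\}\in\alpha,
\end{align*}
for the leading $\log$ term in~\eqref{eqn::ppf_asy1}, plus a second identity involving the $\{j,j+1\}\in\beta$ terms to pin down the $\pi$ in~\eqref{eqn::ppf_asy2}. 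These are statements about $\LM^{-1}$ (not $\LM$), whose entries are not sign-definite and whose combinatorics is genuinely intricate; you describe the recursion as "likely the technical heart" but offer no argument. So your route to (ASY) is incomplete. It may well be completable --- this kind of identity is consistent with Temperley--Lieb structure --- but it would be a substantial separate lemma, whereas the paper sidesteps it entirely by working with the probabilistic quantities $p^\alpha_\beta$, for which the asymptotics~\eqref{eqn::crossingproba_asyalpha_general}--\eqref{eqn::crossingproba_asybeta_general} follow from soft arguments (Beurling estimate, Carath\'eodory convergence, dominated convergence). You should either supply a proof of the cancellation and collapse identities, or switch to the probabilistic route for (ASY) as you already do for (POS).
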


The proof of Theorem~\ref{thm::ppf} comprises 
Lemmas~\ref{lem::ppf_asy1} and~\ref{lem::ppf_asy2} in Section~\ref{sec::ppf}, and the conclusion 
in Section~\ref{subsec::ppf_concluding}.
The proof also uses Theorem~\ref{thm::coulombgasintegral}.
The main difficulties are to prove the asymptotic properties \textnormal{(ASY)} in Section~\ref{subsec::ppf_asy},
and the positivity \textnormal{(POS)} in Section~\ref{subsec::ppf_concluding} 
--- importantly, both use
the fact that $\PartF_{\alpha}$ are related to UST crossing probabilities. 

\smallbreak

Generalizing the covariance property~\eqref{eqn::USTCOV}, we extend the definitions of $\LF_{\beta}$ and $\PartF_{\alpha}$ to general polygons, whenever the derivatives of the associated conformal map in~\eqref{eqn::USTCOV} are defined. 
Thus, we set 
\begin{align*}
F(\Omega; x_1, \ldots, x_{2N}) 
:= \prod_{j=1}^{2N} |\varphi'(x_j)|^{-1/8} \times F(\varphi(x_1), \ldots, \varphi(x_{2N})) ,
\quad \textnormal{where} \quad
F = \LF_{\beta} \textnormal{ or } \PartF_{\alpha} ,
\end{align*}
and where $\varphi$ is any conformal map from $\Omega$ onto $\HH$ with $\varphi(x_1)<\cdots<\varphi(x_{2N})$,
assuming that the marked boundary points 
$x_1, \ldots, x_{2N}$ lie on sufficiently regular boundary segments (e.g. $C^{1+\eps}$ for some $\eps>0$). 

\smallbreak

In general, a \emph{partition function} (with $\kappa=8$) will refer to a positive smooth function $\PartF \colon \chamber_{2N} \to\R_{>0}$ satisfying the BPZ PDE system~\eqref{eqn::USTPDE} 
and M\"obius covariance~\eqref{eqn::USTCOV}. 
We can use any partition function to define a 
\emph{Loewner chain associated to} $\PartF$: in the upper half-plane $\HH$, started from $x_i \in \R$, 
and with marked points $(x_1, \ldots, x_{i-1}, x_{i+1}, \ldots, x_{2N})$,
this is the Loewner chain driven by 
the solution $W$ to the stochastic differential equations (SDEs) 
\begin{align}\label{eqn::driving_general}
\begin{cases}
\ud W_t = \sqrt{8} \, \ud B_t + 8 \, (\partial_i\log \PartF)(V_t^{1}, \ldots, V_t^{i-1}, W_t, V_t^{i+1}, \ldots, V_t^{2N}) \, \ud t, \\
\ud V_t^j =\frac{2 \, \ud t}{V_t^j-W_t},\\ 
W_0 = x_{i},\\
V_0^j=x_j, \quad j\in\{1, \ldots, i-1, i+1, \ldots, 2N\} .
\end{cases}
\end{align}
This process is well-defined up to the first time when either $x_{i-1}$ or $x_{i+1}$ is \emph{swallowed} (i.e., when the denominator in the SDE~\eqref{eqn::driving_general} blows up).
The functions $\LF_\beta$ and $\PartF_\alpha$ are examples of partition functions.

\subsection{Scaling limit results: Uniform spanning tree in polygons}
\label{subsec: UST_intro}

We now consider UST on a scaled square lattice (see the precise formulation in Section~\ref{sec::ust}). Suppose that $(\Omega^{\delta, \diamond}; x_1^{\delta, \diamond}, \ldots, x_{2N}^{\delta, \diamond})$ is a sequence of medial polygons 
on $\delta(\Z^2)^{\diamond}$. 
A common notion of convergence for such a sequence is termed after Carath\'{e}odory (see Section~\ref{subsec::ust_cvg_observable}).
However, for our purposes, the following stronger notion of convergence phrased in terms of a curve metric is relevant. 
The set $\curveSpace$ of planar oriented curves, that is, continuous mappings from $[0,1]$ to $\C$ modulo reparameterization, is a complete and separable metric space 
\begin{align}\label{eqn::metric_curvesspace}
(\curveSpace, \metric) , \qquad
\metric(\eta_1, \eta_2) 
:= \inf_{\psi_1, \psi_2} \sup_{t\in[0,1]} \big| \eta_1(\psi_1(t)) - \eta_2(\psi_2(t)) \big| ,
\end{align}
where the infimum is taken over all increasing  homeomorphisms $\psi_1, \psi_2 \colon [0,1]\to [0,1]$. 
We will assume that $(\Omega^{\delta, \diamond}; x_1^{\delta, \diamond}, \ldots, x_{2N}^{\delta, \diamond})$ converges to a polygon $(\Omega; x_1, \ldots, x_{2N})$ in the following sense: 
\begin{align} \label{eqn::polygon_cvg}
\begin{split}
\; & \textnormal{there exists} \quad C \in (0,\infty) \quad \textnormal{such that } \\ 
\; & \metric \big( (x_i^{\delta, \diamond} \, x_{i+1}^{\delta, \diamond}),(x_i \, x_{i+1}) \big)\le C \, \delta , 
\quad\textnormal{for all } i\in \{1,2, \ldots, 2N\} ,
\end{split}
\end{align}
where $(x \, y)$ denotes the counterclockwise boundary arc between $x$ and $y$. 
Note that such a convergence also implies convergence in the Carath\'{e}odory sense.

\begin{figure}[ht!]
\begin{center}
\includegraphics[width=0.45\textwidth]{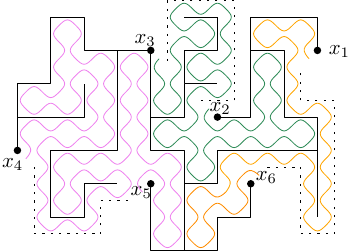}
\end{center}
\caption{\label{fig::polygon} 
For the UST 
in a polygon with six marked points on the boundary, 
with the boundary arcs $(x_1 \, x_2), (x_3 \, x_4), (x_5 \, x_6)$ wired, 
there are three Peano curves connecting $\{x_1, x_2, x_3, x_4, x_5, x_6\}$ pairwise.}
\end{figure}

Next, let $\Omega^{\delta}\subset\delta\Z^2$ be the graph on the primal lattice corresponding to $\Omega^{\delta, \diamond}$. Consider the primal polygon $(\Omega^{\delta}; x_1^{\delta}, \ldots, x_{2N}^{\delta})$ and spanning trees on it, with the following boundary conditions
(see Figure~\ref{fig::polygon}):
first, every other boundary arc is wired,
\begin{align*} 
(x_{2r-1}^{\delta} \, x_{2r}^{\delta})\textnormal{ is wired } \quad \textnormal{for all } r \in \{1,2,\ldots,N\},
\end{align*}
and second, these $N$ wired arcs are further wired together
according to a 
non-crossing partition outside of $\Omega^{\delta}$. 
Note that there is a bijection between non-crossing partitions of the $N$ wired boundary arcs 
and planar link patterns with $N$ links, illustrated in Figure~\ref{fig::8pointsE_meander}. 
Hence, we may encode the boundary condition by a label $\beta\in\LP_N$,
and we thus speak of the UST \emph{with boundary condition} (b.c.) $\beta$. 
Let $\tree_{\delta}$ be a uniformly chosen spanning tree on $\Omega^{\delta}$ 
with b.c. $\beta$. 
Then, there exist $N$ curves on the medial lattice 
$\Omega^{\delta, \diamond}$
running along the tree and connecting $\{x_1^{\delta, \diamond}, \ldots, x_{2N}^{\delta, \diamond}\}$ pairwise. 
We call them UST \emph{Peano curves}. The goal of this section is to describe the scaling limit of these Peano curves.

\smallbreak

Our first result identifies the scaling limits of the UST Peano curves with $\SLE_8$ type processes having specific partition functions, given exactly by the functions
$\LF_{\beta}$ of Theorem~\ref{thm::coulombgasintegral}, defined in~\eqref{eqn::coulombgasintegral}. 
For $N=1$, we have 
$\LF_{\vcenter{\hbox{\includegraphics[scale=0.2]{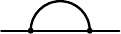}}}}(x_1,x_2)
= \pi \, (x_2-x_1)^{1/4}$
(cf.~Lemma~\ref{lem: N=1})
and the limit process in Theorem~\ref{thm::ust_general}
is the chordal $\SLE_8$ in $\Omega$ between $x_1$ and $x_2$~\cite{LawlerSchrammWernerLERWUST}. 

\begin{theorem} \label{thm::ust_general}
Fix 
a polygon $(\Omega; x_1, \ldots, x_{2N})$ whose boundary $\partial\Omega$ is  a $C^1$-Jordan curve.
Fix also a link pattern $\beta \in \LP_N$. 
Suppose that a sequence $(\Omega^{\delta, \diamond}; x_1^{\delta, \diamond}, \ldots, x_{2N}^{\delta, \diamond})$ of medial polygons converges to $(\Omega; x_1, \ldots, x_{2N})$ 
in the sense~\eqref{eqn::polygon_cvg}.
Consider the UST on the primal polygon $(\Omega^{\delta}; x_1^{\delta}, \ldots, x_{2N}^{\delta})$ with boundary condition
$\beta$.
For each $i\in\{1,2, \ldots, 2N\}$, let $\eta_i^{\delta}$ be the Peano curve started from $x_{i}^{\delta, \diamond}$. 
Let $\varphi$ 
be any conformal map from $\Omega$ onto $\HH$ such that $\varphi(x_1)<\cdots<\varphi(x_{2N})$. 
Then, $\eta_i^{\delta}$ converges weakly to the image under $\varphi^{-1}$ of the Loewner chain with driving function solving the following SDEs, up to 
the first time when $\varphi(x_{i-1})$ or $\varphi(x_{i+1})$ is swallowed\textnormal{:} 
\begin{align}\label{eqn::ust_polygon_driving_general}
\begin{cases}
\ud W_t = \sqrt{8} \, \ud B_t + 8 \, (\partial_i\log \LF_{\beta})(V_t^{1}, \ldots, V_t^{i-1}, W_t, V_t^{i+1}, \ldots, V_t^{2N}) \, \ud t, \\
\ud V_t^j =\frac{2 \, \ud t}{V_t^j-W_t},\\ 
W_0 = \varphi(x_{i}) ,\\
V_0^j=\varphi(x_j), \quad j\in\{1, \ldots, i-1, i+1, \ldots, 2N\} .
\end{cases}
\end{align}
\end{theorem}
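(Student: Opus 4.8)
The plan is to follow the now-standard strategy for identifying scaling limits of interface curves via a discrete holomorphic (fermionic-type) observable, adapted to the $\kappa = 8$ UST Peano curve setting with nontrivial boundary conditions $\beta$. First I would fix the target curve $\eta_i^\delta$ starting from $x_i^{\delta,\diamond}$, parameterize it by capacity (after transporting to $\HH$ via $\varphi$), and run a martingale/tightness argument in two parts: (i) tightness of the family $\{\eta_i^\delta\}_\delta$ in the curve metric~\eqref{eqn::metric_curves}, and (ii) identification of every subsequential limit as the Loewner chain with driving function~\eqref{eqn::ust_polygon_driving_general}. For (i), I would invoke the Aizenman--Burchard / Kemppainen--Smirnov machinery: it suffices to verify a uniform (in $\delta$) power-law bound on the probability that the Peano curve makes an unexpected crossing of an annulus, which for the UST follows from Wilson's algorithm and known estimates on loop-erased walks, exactly as in the boundary-arc setting of~\cite{LawlerSchrammWernerLERWUST, HanLiuWuUST}. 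Since boundary conditions only affect where arcs are wired, the annulus-crossing estimates are insensitive to $\beta$, so tightness transfers verbatim.

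The heart of the proof is (ii), the observable. I would construct a discrete observable $M^\delta$ on the medial polygon --- a suitable ratio built from the UST partition function (equivalently, a combination of loop-erased walk hitting quantities à la Kenyon--Wilson) that is a martingale for the exploration of $\eta_i^\delta$ --- normalized so that its scaling limit is $\LF_\beta$ evaluated at the images of the marked points under the uniformizing map of the slit domain. The key algebraic input is that $\LF_\beta$, by Theorem~\ref{thm::coulombgasintegral}, satisfies the second-order PDE system~\eqref{eqn::USTPDE} and the Möbius covariance~\eqref{eqn::USTCOV}: these two facts are precisely what make the process
\begin{align*}
t \longmapsto \prod_{j \neq i}\,(\text{capacity factors})\times \LF_\beta\big(V_t^1, \ldots, V_t^{i-1}, W_t, V_t^{i+1}, \ldots, V_t^{2N}\big)
\end{align*}
a local martingale for a Loewner chain driven by~\eqref{eqn::driving_general} with $\PartF = \LF_\beta$ (the Itô computation: the drift term $8\,\partial_i \log \LF_\beta$ is exactly the one that kills the $\ud t$-part, using the PDE to cancel the second-order term and the covariance to handle the marked-point drifts). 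Thus, once I show the discrete martingale observable $M^\delta$ converges (uniformly on compacts away from the marked points) to this continuum quantity --- via precompactness of $M^\delta$ from discrete harmonicity/holomorphicity, boundary-value identification on the wired and free arcs dictated by $\beta$, and uniqueness of the boundary value problem it solves --- a standard argument (e.g.\ as in~\cite{LawlerSchrammWernerLERWUST} or~\cite{HanLiuWuUST}) upgrades convergence of observables to convergence of driving functions: the limiting driving process $W$ is a semimartingale whose martingale part has quadratic variation $8\,\ud t$ (so a time-changed Brownian motion with coefficient $\sqrt 8$) and whose finite-variation part is forced to be $8\,\partial_i \log \LF_\beta \,\ud t$ by the martingale property of the limit of $M^\delta$. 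Combining with tightness from (i) and running the chain up to the swallowing time of $\varphi(x_{i-1})$ or $\varphi(x_{i+1})$ yields the claimed weak convergence. Transporting back by $\varphi^{-1}$ and using conformal invariance of both the discrete model (in the scaling limit) and of $\SLE$-type Loewner chains gives the statement for general $C^1$-Jordan polygons; the $C^1$ regularity is what guarantees $\varphi$ extends to a homeomorphism with the requisite boundary derivative control needed for~\eqref{eqn::USTCOV} and for the curve-metric convergence~\eqref{eqn::polygon_cvg} to pass through.

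The main obstacle, as the authors themselves flag, is the \emph{choice and analysis of the discrete observable}: unlike the simplest case ($N=1$, a single Peano curve between two boundary arcs) where one uses a straightforward loop-erased-walk / harmonic observable, here the observable must simultaneously encode (a) the connectivity data of $N$ curves, (b) the wired-together boundary condition $\beta$, and (c) the degenerate field insertions at all $2N$ marked points, and it must be shown to be \emph{exactly} a discrete martingale (not merely approximately) and to have $\LF_\beta$ --- a complicated iterated Coulomb gas integral --- as its precise scaling limit. Establishing that the limit is $\LF_\beta$ and not some other solution of~\eqref{eqn::USTPDE}--\eqref{eqn::USTCOV} requires pinning down boundary asymptotics of the observable matching the (ASY) properties of Theorem~\ref{thm::coulombgasintegral}, which is the most delicate analytic step; by contrast, the Itô-calculus verification that these PDE/covariance properties produce the stated martingale, and the tightness argument, are comparatively routine.
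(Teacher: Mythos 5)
Your overall blueprint---tightness via annulus-crossing estimates, a discrete holomorphic observable, then Itô calculus to extract the driving function---matches the paper's strategy, as does your correct recognition that the choice and analysis of the observable is the hardest step. However, your description of what the observable \emph{is} and how its limit is tied to $\LF_\beta$ contains a genuine gap, and the alternative you propose is precisely the one the paper argues does not work at $\kappa = 8$.

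Concretely, the paper's observable $\phi_\beta^\delta$ is not ``a ratio built from the UST partition function normalized so its scaling limit is $\LF_\beta$.'' It is a function of a \emph{spatial} variable: $u_\delta(z^*)$ is the probability a dual vertex $z^*$ lies to the right of an exploration path, and $v_\delta(z)$ is a two-component forest quantity at a primal vertex $z$; these combine into a discrete holomorphic $\phi_\beta^\delta$ whose scaling limit is a Schwarz--Christoffel conformal map $\phi_\beta$ from $\Omega$ onto a unit-width rectangle with slits dictated by $\beta$. The link to $\LF_\beta$ is \emph{not} established by matching boundary asymptotics of the observable to the (ASY) property of Theorem~\ref{thm::coulombgasintegral}. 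Rather, one first expands $\phi_\beta(z;\boldsymbol{x})$ to second order in $(z-x_1)^{1/2}$ near the growth point (Lemma~\ref{lem::conformal_general_expansion}), obtaining coefficients $\LH_\beta, \LK_\beta$ built from the Schwarz--Christoffel accessory parameters $\mu_\ell$; then, via the period matrix $P_\beta$, Vandermonde determinants, and Cramer's rule, one proves the exact algebraic identity $\partial_1 \log \LF_\beta = (3\LK_\beta - 2\partial_1\LH_\beta)/(2\LH_\beta)$ (Lemma~\ref{lem::ust_general_goal}). It is this identity, fed into the Itô expansion of $M_t(z) = \phi_\beta(g_t(z); W_t, V_t^\cdot)$, that forces the drift to be $8\,\partial_1\log\LF_\beta$.

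Your fallback plan---``pin down $\LF_\beta$ among solutions of~\eqref{eqn::USTPDE}--\eqref{eqn::USTCOV} via (ASY)''---would require a uniqueness/classification theorem for the solution space of the level-two BPZ system at $\kappa = 8$, which is exactly what the paper states is unavailable: at $\kappa = 8$ the two Frobenius exponents merge, the Flores--Kleban / Peltola--Wu classification fails, and the dimension of the full solution space is not even known. This is why the explicit algebraic bridge through the conformal map, rather than an a priori uniqueness argument, is indispensable. Two further structural points you omit but which carry real content: (a) proving the limiting $\phi_\beta$ is injective (hence genuinely a conformal map onto the slit rectangle) is a delicate step requiring a maximum-principle argument ruling out configurations~\eqref{eqn::1}, and the paper flags it explicitly as nontrivial for general $\beta$; and (b) the proof must first be done for $i=1$ with $\{1,2N\}\notin\beta$, then extended to $\{1,2N\}\in\beta$ by tree--dual-tree duality, and finally to all $i$ via the rotation covariance of $\LF_\beta$ (Lemma~\ref{lem: rotation symmetry loop}), none of which are automatic from the $i=1$ case.
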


The main issue to prove Theorem~\ref{thm::ust_general} is to identify the limit and show its uniqueness. To this end, one can use a discrete holomorphic observable, which is natural when only one or two curves are present (in that case, there is at most one free parameter after fixing three parameters via conformal invariance). 
Dub\'edat proposed a formula for the simplest case of
\begin{align} \label{eqn::unnested}
\beta = \unnested := \{\{1,2\} , \{3,4\} , \ldots, \{2N-1, 2N\}\} ,
\end{align}
in his article~\cite{DubedatEulerIntegralsCommutingSLEs} but without proof.
His formula is different from ours at first sight, but it follows from Proposition~\ref{prop::two_Fs_as_determinants} that they are actually the same.
The recent work~\cite{HanLiuWuUST} concerns the case of  
$N=2$, which is solvable by an ordinary differential equation\footnote{H.W. learned the observable in~\cite{HanLiuWuUST} from a master course delivered by S.~Smirnov in 2015, but we are not able to identify a published reference.}.
However, the general $\beta$ involving non-trivial conformal moduli is significantly more difficult.

\smallbreak

The proof of Theorem~\ref{thm::ust_general} is given in Section~\ref{sec::ust}. 
Roughly, it follows the standard strategy: 
first, we need precompactness (tightness) of the family 
$(\eta_i^{\delta})_{\delta>0}$ (from well-known arguments, cf.~Lemma~\ref{lem::Peanocurve_tight}); 
second, we construct a discrete martingale observable (Sections~\ref{subsec::holo_general}--\ref{subsec::ust_cvg_observable}); 
and third, we identify the subsequential limits $\phi_{\beta}$ through the observable (Section~\ref{subsec::ust_Peano_conv}). 
The identification step involves deriving the expansion of the observable $\phi_{\beta}(z)$ 
as $z$ approaches one of the marked points 
to a certain precision, 
and relating the expansion coefficients explicitly to the partition function $\LF_{\beta}$ 
(Lemmas~\ref{lem::conformal_general_expansion}~\&~\ref{lem::ust_general_goal_new} in Section~\ref{subsec::expansion}). 
The observable $\phi_{\beta}$ from Proposition~\ref{prop::holo_cvg} also gives the scaling limit distribution of the loop-erased random walk branch in the UST, see~\cite{LiuWuLERW}.

Interestingly enough, the scaling limit of the observable $\phi_{\beta}$ (see Proposition~\ref{prop::holo_cvg})
can be written, on the one hand, as an abelian integral (see Equation~\eqref{eq: observable_limit_detA-DetC}), 
\begin{align} \label{eq: observable another formula}
\phi_{\beta}(z) 
= \; & - \frac{\chi_{\beta}(z)}{\chi_{\beta}(x_{b_1})}
\end{align}
involving the $a$-periods discussed in Section~\ref{sec::coulombgasintegrals_new}, 
\begin{align*}
\chi_{\beta}(z) := 
\landupint_{x_1}^z \ud u_1 
\landupint_{x_{a_{2}}}^{x_{b_{2}}} \ud u_2 
\cdots \landupint_{x_{a_{N-1}}}^{x_{b_{N-1}}} \ud u_{N-1} 
\prod_{1\leq r<s\leq N-1}(u_{s}-u_{r}) 
\; \prod_{r=1}^{N-1}
\; \frac{\ud u_r}{\prod_{k=1}^{2N} (u_{r}-x_{k})^{1/2}} ,
\end{align*}
and on the other hand, as a degenerate  
Schwarz-Christoffel conformal map (see Appendix~\ref{app::SC_mappings}),
\begin{align} \label{eq: Schwarz-Christoffel formula}
\phi_{\beta}(z) 
= \landupint_{x_1}^z \frac{\prod_{\ell=1}^{N-2}(u-\mu_\ell) \, \ud u}{\prod_{j=1}^{2N}(u-x_{j})^{1/2}} 
\bigg(\landupint_{x_1}^{x_{b_1}} \frac{\prod_{\ell=1}^{N-2}(u-\mu_\ell) \, \ud u}{\prod_{j=1}^{2N}(u-x_{j})^{1/2}} \bigg)^{-1} , \qquad z \in \overline{\HH} ,
\end{align}
where the accessory parameters $\mu_1, \ldots, \mu_{N-2} \in \R$ are mapped to the tips of the slits in the image of $\phi_{\beta}$ --- see Figure~\ref{fig::slitrectangle} for an example.

\begin{figure}[ht!]
\begin{center}
\includegraphics[width=0.7\textwidth]{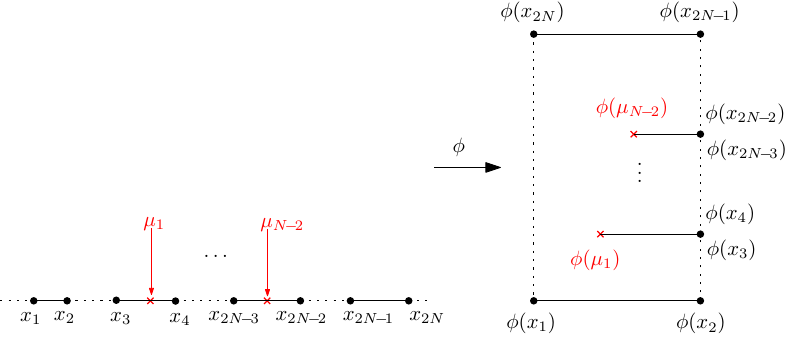}
\end{center}
\caption{\label{fig::slitrectangle}
Illustration of the Schwarz-Christoffel type conformal map $\phi_{\beta}$~\eqref{eq: Schwarz-Christoffel formula} in the case where the link pattern $\beta = \{\{1,2\}, \{3,4\}, \ldots, \{2N-1,2N\}\}$ is the simplest boundary condition. The accessory parameters $\mu_1, \ldots, \mu_{N-2} \in \R$ are mapped to the tips of the slits in the image of $\phi_{\beta}$.}
\end{figure}

\smallbreak

In our second scaling limit result, we identify the scaling limits of the crossing probabilities of the Peano curves as ratios of pure partition functions $\PartF_{\alpha}$ 
with the partition functions $\LF_{\beta}$,
the latter 
arising from the choice of the boundary condition $\beta$.
Of course, the internal crossing pattern encoded in $\alpha$ must be compatible with the b.c. $\beta$, which is exactly what the meander matrix~\eqref{eqn::renormalized_meander_matrix} ensures. 
We denote by $\PP_{\beta}^{\delta}$ the law of the UST with b.c. $\beta \in \LP_N$. 

\begin{restatable}{theorem}{ustcrossingproba}
\label{thm::ust_crossing_proba}
Assume the same setup as in Theorem~\ref{thm::ust_general}. The endpoints of the $N$ Peano curves give rise to a random planar link pattern $\conn^{\delta}$ in $\LP_N$. For any $\alpha\in\LP_N$, we have
\begin{align}\label{eqn::ust_corssing_proba}
\lim_{\delta\to 0} \PP_{\beta}^{\delta}[\conn^{\delta}=\alpha] 
= \LM_{\alpha, \beta} \, \frac{\PartF_{\alpha}(\Omega; x_1, \ldots, x_{2N})}{\LF_{\beta}(\Omega; x_1, \ldots, x_{2N})},
\end{align}
where $\LF_{\beta}$, $\LM_{\alpha,\beta}$, and $\PartF_{\alpha}$ are defined respectively in~\eqref{eqn::coulombgasintegral}, \eqref{eqn::renormalized_meander_matrix}, and~\eqref{eqn::ppf_def}. 
\end{restatable}

Let us briefly summarize the strategy for the proof of Theorem~\ref{thm::ust_crossing_proba}, 
given in detail in Sections~\ref{subsec::ust_crossingproba}--\ref{subsec::ppf_asy}.
Roughly, it requires two inputs. 
First, the scaling limit of the law of $\eta_i^{\delta}$ is given by the Loewner chain associated to $\LF_{\beta}$, which is provided by Theorem~\ref{thm::ust_general}. 
Second, the scaling limit of the probability $\PP_{\beta}^{\delta}[\conn^{\delta}=\alpha]$, denoted by $p^{\alpha}_{\beta}$, exists and is conformally invariant (Proposition~\ref{prop::proba_limit}). 
Our proof of this second fact relies on the earlier work by Kenyon \&~Wilson~\cite{KenyonWilsonBoundaryPartitionsTreesDimers}. 
Lastly, with these two inputs at hand, 
we consider the conditional law 
of $\eta_i^{\delta}$ given $\{\conn^{\delta}=\alpha\}$ as in Proposition~\ref{prop::conditionallaw_ppf}. 
It turns out that the family of such conditional laws is precompact and independent of the boundary condition $\beta$ (Lemma~\ref{lem::conditionallaw}). 
Now, for any subsequential limit $\tilde{\eta}_i$, from the preceding two inputs we conclude that the law of $\tilde{\eta}_i$ is given by the Loewner chain associated to $p^{\alpha}_{\beta} \, \LF_{\beta}$ (Proposition~\ref{prop::conditionallaw_ppf}). 
As the law of $\tilde{\eta}_i$ is independent of the boundary condition, we then conclude that $p^{\alpha}_{\beta} \, \LF_{\beta} = \PartF_{\alpha}$. 

\smallbreak

Let us also remark that, while~\cite{KenyonWilsonBoundaryPartitionsTreesDimers}  
provides a systematic method to calculate the probability $\PP_{\beta}^{\delta}[\conn^{\delta}=\alpha]$ in the discrete model
(see the summary in Section~\ref{subsec::ust_crossingproba}), 
this does not yet give the result in the scaling limit.
Indeed, by taking the limit $\delta \to 0$, we merely obtain a formula for the left-hand side of~\eqref{eqn::ust_corssing_proba}. 
However, it is far from clear why the answer from~\cite{KenyonWilsonBoundaryPartitionsTreesDimers} is the same as the right-hand side of~\eqref{eqn::ust_corssing_proba}. 
Importantly, the latter is an explicit formula for the scaling limit of the crossing probability~\eqref{eqn::ust_corssing_proba}, which can furthermore be directly related to log-CFT, as we motivate next.

\subsection{Speculation: Logarithmic CFT for UST?}
\label{subsec: CFT_intro}

In the research of critical phenomena in planar models, Polyakov's conformal invariance conjecture~\cite{Polyakov:Conformal_symmetry_of_critical_fluctuations}, later formalized by Belavin, Polyakov \&~Zamolodchikov~\cite{BPZ:Infinite_conformal_symmetry_in_2D_QFT},
has proven in the last couple of decades to be a remarkable idea that has lead to many breakthroughs in contemporary mathematics
(see, e.g., \cite{LSW:Brownian_intersection_exponents1,
SchrammICM, SmirnovConformalInvariance}).
Even before its mathematical fruition, 
from the assumption of conformal invariance, many  properties of critical planar models 
such as critical exponents and the KPZ formula were correctly derived (see, e.g.,~\cite{Nienhuis:Coulomb_gas_formulation_of_2D_phase_transitions, 
Duplantier:Conformal_fractal_geometry_and_boundary_quantum_gravity, 
Cardy:Conformal_invariance_and_surface_critical_behavior}).
It has now become customary to speak of a \emph{conformal field theory}, or briefly, CFT, 
associated to each critical lattice model even in the mathematics literature.

The best understood such a CFT description is arguably that for 
the minimal model of the critical planar Ising model (having central charge $c=1/2$), 
whose bulk correlation functions can now be claimed to have been fully constructed~\cite{CHI:Ising_CFT}. 
For other models, even though numerous results towards their conformal invariance have now been established, 
their full description in terms of conformal field theories is at its infancy. 
In particular, unfortunately but interestingly, 
CFTs pertaining to a full description of (non-local) observables in
even the simplest lattice models such as percolation and 
self-avoiding polymers~\cite{Cardy:Logarithmic_correlations, MR:Percolation_LCFT, Read_Saleur:boundary_log_CFT} ($c=0$), 
or spanning trees, critical dense polymers, and the Abelian sandpile model~\cite{Pearce_Rasmussen:Solvable_critical_dense_polymers, Ruelle:Abelian_sandpile_CFT_survey} ($c=-2$), 
or the Ising model at the presence of boundary conditions, 
are still poorly understood even in the physics literature.
Namely, whenever one wishes to find a CFT description for the boundary critical phenomena in these models, such as
for the interfaces and boundary conditions, one immediately runs outside of the realm of minimal models.
So-called \emph{logarithmic
conformal field theory} (log-CFT) has been proposed as a framework for 
the complete description of the scaling limits of such models.
These conformal field theories are non-unitary, and in particular, they lack reflection positivity.
However, as we shall see in the present article, such theories can still have a probabilistic origin.

\smallbreak

Conformal invariance features in CFT via the effect of infinitesimal local conformal transformations, 
represented in terms of the \emph{Virasoro algebra}, 
on the fields in the theory. 
Thus, the representation theory of the Virasoro algebra, or extensions thereof, plays a fundamental role in understanding CFT mathematically~\cite{Schottenloher:Mathematical_introduction_to_CFT}.
(For instance, the minimal models such as in~\cite{CHI:Ising_CFT} comprise only finitely many irreducible Virasoro representations, which makes them amenable to a complete solution.)
The Virasoro algebra is the infinite-dimensional Lie algebra spanned by $\{ \mathrm{L}_n \colon n \in \Z \} \cup \{ \mathrm{C} \}$ such that 
\begin{align*} 
[\mathrm{L}_n,\mathrm{C}] = 0 \quad \quad \textnormal{and} \quad \quad 
[\mathrm{L}_n,\mathrm{L}_m] = (n-m) \mathrm{L}_{n+m} 
+ \frac{1}{12} n(n^2-1) \delta_{n,-m} \mathrm{C} , \quad \textnormal{for } n,m \in \Z.
\end{align*}
The generator $\mathrm{L}_0$ of scalings in the Virasoro algebra 
becomes \emph{non-diagonalizable} in log-CFTs,
that is, it has non-trivial Jordan blocks (generalized eigenspaces of dimension greater than one) in the representations relevant to the theory.
The logarithmic behavior in the correlation functions stems from this property\footnote{This results from the appearance of fields transforming in \emph{non-semisimple} representations of the Virasoro algebra, making the algebraic content of log-CFTs notoriously difficult.}.
For a survey on log-CFT, see~\cite{Creutzig_Ridout:log_CFT_survey}.

\smallbreak 

Let us now very briefly describe the representations of the Virasoro algebra relevant to CFT interpretations of the present work.
Each universal highest weight module $\mathrm{V}_{c,h}$ 
(Verma module) 
is generated by a highest weight vector $v_{c,h}$ that is an eigenvector 
of the Virasoro generator $\mathrm{L}_0$ with some eigenvalue $h \in \C$, called \emph{conformal weight}, 
and where the central element $\mathrm{C}$ 
acts as a constant $c \in \C$, called the \emph{central charge}.
Of interest to us are those Verma modules that contain so-called \emph{singular vectors}, which result in correlation functions satisfying BPZ PDEs, such as~\eqref{eqn::USTPDE} for $\kappa=8$. These Verma modules have been classified by Fe{\u\i}gin and Fuchs~\cite{Feigin-Fuchs:Verma_modules_over_Virasoro_book}: 
they belong to a special series indexed by two integers 
$r,s \geq 1$, and a parameter $t(\kappa) = \kappa/4 \in \C \setminus \{0\}$, such that 
$h = h_{r,s}(\kappa)$ and $c = c(\kappa)$ are given by 
\begin{align} \label{eq: Kac weights}
\begin{split}
h_{r,s}(\kappa) := \; & \frac{(r^2-1)}{4} t(\kappa) + \frac{(s^2-1)}{4} \frac{1}{t(\kappa)} + \frac{(1-rs)}{2} 
\\
c(\kappa) := \; & 13 - 6 \big( t(\kappa)+ \tfrac{1}{t(\kappa)} \big) .
\end{split}
\end{align}
In this case, the smallest such $\ell = rs$ is the lowest \emph{level} at which a singular vector occurs in $\mathrm{V}_{c,h}$.
The $\mathrm{L}_0$-eigenvalues $h_{r,s}(\kappa)$ are often termed \emph{Kac conformal weights}.

We have used the parameterization by $\kappa$ 
to make connection with $\SLE_\kappa$ theory (see~\cite{PeltolaCFTSLE} for references).
For example, we have
$h_{1,1}(\kappa) = 0$, 
$h_{1,2}(\kappa) = \frac{6-\kappa}{2\kappa}$, 
and $h_{1,3}(\kappa) = \frac{8-\kappa}{\kappa}$.
It is believed that the $\SLE_\kappa$ curve is in some sense generated by a CFT (primary) field $\Phi_{1,2}$ of weight $h_{1,2}(\kappa)$,
that generates a representation which is a quotient of the Verma module $\mathrm{V}_{c(\kappa),\, h_{1,2}(\kappa)}$ (by the universality property).
The field $\Phi_{1,2}$ is also known as a \emph{boundary condition changing operator}~\cite{Cardy:Conformal_invariance_and_surface_critical_behavior, Bauer-Bernard:Conformal_field_theories_of_SLEs, Cardy:SLE_and_Dyson_circular_ensembles}.
Note that when $\kappa=8$, we have $h_{1,1}(8) = 0 = h_{1,3}(8)$. This results in a logarithmic correction in the asymptotics of the correlation functions for the field $\Phi_{1,2}$, that we observe rigorously in Theorems~\ref{thm::coulombgasintegral} and~\ref{thm::ppf}.
One way to see this heuristically is via so-called \emph{fusion} of the field with itself, that we next describe\footnote{For the sake of keeping the exposition to the point, 
we only discuss fusion of the simplest primary fields $\Phi_{1,2}$, and refer to the vast CFT literature for more general features (keeping in mind that most of it is written in the physics level of rigor).}.

Generically, one expects a property of type 
$\Phi_{1,2} \boxtimes \Phi_{1,2} = \Phi_{1,1} \boxplus \Phi_{1,3}$ to hold, 
where ``$\boxtimes$'' denotes some kind of an operator product (OPE)
and ``$\boxplus$'' indicates the possible outcomes (but does not represent a direct sum). 
That is, the following heuristic operator product asymptotic expansion should hold:
\begin{align} \label{eqn::fusion_OPE}
\textnormal{``} \; 
\Phi_{1,2}(z_1) \; \Phi_{1,2}(z_2) 
\; \sim \; 
\frac{c_1}{(z_2-z_1)^{\Delta_{1,1}}} \, \Phi_{1,1} (z_2)
\; + \; 
\frac{c_2}{(z_2-z_1)^{\Delta_{1,3}}} \, \Phi_{1,3} (z_2) \; \textnormal{''} ,
\quad \textnormal{as } 
|z_1 - z_2| \to 0 ,
\end{align}
where $c_1, c_2 \in \C$ are \emph{structure constants}, and
the exponents are 
$\Delta_{1,1} = 2 h_{1,2}(\kappa) - h_{1,1}(\kappa) = \frac{6-\kappa}{\kappa}$ for the so-called identity channel,
and $\Delta_{1,3} = 2 h_{1,2}(\kappa) - h_{1,3}(\kappa) = - \frac{2}{\kappa}$ for the other channel.
Caution is in order here:~\eqref{eqn::fusion_OPE} is to be understood in terms of \emph{correlation functions} of the fields, as the latter are not defined pointwise.
In other words, specifying~\eqref{eqn::fusion_OPE} means specifying the Frobenius series of the correlation functions.
When $\kappa=8$, the exponents coincide: $\Delta_{1,1} = \Delta_{1,3} = -1/4$.
This results in a phenomenon similar to the solution of the hypergeometric equation when the roots of its indicial exponents coincide (or differ by an integer) --- one of the linearly independent solutions has a logarithm.
We invite the reader to compare this with the statements \textnormal{(ASY)} in Theorems~\ref{thm::coulombgasintegral} and~\ref{thm::ppf}.

To accommodate this phenomenon, one could write the right-hand side of~\eqref{eqn::fusion_OPE} in the less restrictive form 
$c_1(z_1,z_2) \,  \Phi_{1,1}(z_2) + c_2(z_1,z_2) \,  \Phi_{1,3}(z_2)$,
for some functions $c_1(z_1,z_2)$ and $c_2(z_1,z_2)$ allowing logarithmic terms in the expansion.
Our results show that, for any CFT (boundary) fields describing the scaling limit of the UST Peano curves,
that is, $\SLE_8$ curves, the formal OPE product~\eqref{eqn::fusion_OPE} has the explicit form 
\begin{align} \label{eq: our_fusion}
(z_2-z_1)^{-1/4} \, \big( \pi \, \Phi_{1,1}(z_2) - \log (z_2 - z_1) \,  \Phi_{1,3}(z_2)  \big) .
\end{align}

From the point of view of Virasoro representation theory, if a Verma module $\mathrm{V}_{c,h}$ contains a singular vector, we may take one at the lowest level and form the quotient module of 
$\mathrm{V}_{c,h}$ by this submodule. 
This quotient module $\mathrm{S}_{c,h}$ is unique and simple (irreducible).
Minimal models are CFTs whose fields are constrained to live in such representations, 
whence the quotienting results in strict truncation of the operator content of the theory, forcing it to be finite.
More precisely, one can also parametrize the special series of central charges and Kac weights by two coprime integers $p,p' \geq 1$ 
as in~\cite[Chapter~7, Eq.~(7.65)]{DMS:CFT}. 
When $c(\kappa) \leq 1$, which is of interest to us, we have $t(\kappa) = p / p'$.
A minimal model of type $M(p,p')$ comprises fields 
$\{ \Phi_{r,s} \colon r,s \in \bZpos , \;  1 \leq r \leq p'-1 , \; 1 \leq s \leq p-1 , \; p r > p' s \}$. 
In particular, with $\kappa=8$, 
we find that the minimal model $M(2,1)$ is empty. Such a model would have central charge $c=-2$, and it is perhaps the most studied example of a log-CFT~\cite{Gurarie:log_CFT, Kausch:Symplectic_fermions}.
In particular, extending it beyond the minimal model to include, for instance, fields $\Phi_{1,s}$ with $s \geq 1$,
we can consider a theory in particular containing $\Phi _{1,1}$, $\Phi _{1,2}$, and $\Phi _{1,3}$. 
Note that the conformal weights of the fields $\Phi_{1,s}$ with $\kappa = 8$ read
$h_{1,s}(8) \in \{0 , -\frac{1}{8} , 0 , \frac{3}{8}, 1 , \frac{15}{8} , 3 , \frac{35}{8} , 6 , \frac{63}{8}, 10, \ldots\}$.
In such a model, one can consider fusion of representations of the Virasoro algebra and ask whether all relevant Virasoro modules are contained in the theory.
From the literature of fusion products in CFT~\cite{Gurarie:log_CFT, GK:Indecomposable_fusion_products}, 
in the case of $c=-2$ the fusion of two simple modules 
$\mathrm{S}_{1,2}$ 
(corresponding to $\Phi_{1,2}$ with $\kappa=8$) is 
described by the exact sequence
\begin{align} \label{eqn::staggered}
0 \longrightarrow \mathrm{S}_{1,1} \overset{\iota}{\longrightarrow} 
\mathrm{S}_{1,2} \boxtimes \mathrm{S}_{1,2}
\overset{\pi}{\longrightarrow} \mathrm{S}_{1,3} \longrightarrow 0 .
\end{align}
The resulting object $\mathrm{M} := \mathrm{S}_{1,2} \boxtimes \mathrm{S}_{1,2}$ is a so-called
\emph{staggered module}~\cite{Rohsiepe:Reducible_but_indecomposable, KR:Staggered} of the Virasoro algebra, which in this case is non-trivial but relatively innocent.
Gurarie showed in~\cite{Gurarie:log_CFT} that this staggered module exists, 
and Gaberdiel \&~Kausch constructed it explicitly~\cite{GK:Indecomposable_fusion_products} as the fusion~\eqref{eqn::staggered}.
We will not get into details of this construction, but only note that 
such a module is unique~\cite[Example~2 and Corollary~3.5]{KR:Staggered}\footnote{See also~\cite[Section~3.6]{Kytola:SLE_local_martingales_in_logarithmic_representations} relating local martingales for certain $\SLE$ variants to log-CFT.}. 
From its structure~\eqref{eqn::staggered}, we see that the staggered module $\mathrm{M}$
contains the simple module $\mathrm{S}_{1,1}$
as a submodule, and it projects onto the module $\mathrm{S}_{1,3}$ such that
$\mathrm{M}  / \mathrm{S}_{1,1} \cong \mathrm{S}_{1,3}$.
Loosely speaking, the asymptotics in Theorems~\ref{thm::coulombgasintegral} and~\ref{thm::ppf}
should correspond to this fusion. Note that in our formulas, also the structure constants ``$-1$'' and ``$\pi$'',
that cannot be inferred from the representation theory, are explicit as in~\eqref{eq: our_fusion}.
Following~\cite{Gurarie:log_CFT}, the asymptotic property without a logarithm 
corresponds to the identity field $\Phi_{1,1}$ that generates the simple submodule $\mathrm{S}_{1,1}$,
and the asymptotic property with the logarithm to its ``logarithmic partner'' $\Phi_{1,3}$ that generates a projective Virasoro module $\mathrm{S}_{1,3}$, which also happens to be simple in this case.
However, the fusion module $\mathrm{M}$ is not a direct sum of these pieces, whereas it is \emph{indecomposable but not semisimple}.
Indeed, $\mathrm{L}_0$ is not diagonalizable, and
we have an off-diagonal action from $\mathrm{S}_{1,3}$ to $\mathrm{S}_{1,1}$,
so $\mathrm{S}_{1,3}$ cannot be a Virasoro submodule of $\mathrm{M}$: 
namely in $\mathrm{M}$, we have $\mathrm{L}_0 \, \Phi_{1,3} = \Phi_{1,1}$ and $\mathrm{L}_n \, \Phi_{1,3} = 0$ for all $n \geq 1$.
This is drastically different from the case of unitary CFTs.

\smallbreak

\begin{speculation}
Correlation functions of the UST wired/free boundary condition changing operators $\Phi_{1,2}$ 
\`a la Cardy~\textnormal{\cite{Cardy:Conformal_invariance_and_surface_critical_behavior, Pearce_Rasmussen:Solvable_critical_dense_polymers}} 
must satisfy asymptotic properties encoded in the explicit OPE~\eqref{eq: our_fusion} 
\textnormal{(}more precisely, Theorems~\ref{thm::coulombgasintegral} and~\ref{thm::ppf} \textnormal{(ASY)}\textnormal{)}. 
In particular, the structure constants for this fusion are explicit. 
Assuming that these operators generate simple 
Virasoro modules $\mathrm{S}_{1,2}$ of central charge $-2$ and conformal weight $-1/8$, 
the fusion rules~\eqref{eq: our_fusion} are associated with the unique Virasoro staggered module~\eqref{eqn::staggered}. 
\end{speculation}

\smallbreak

\begin{remark} \label{rem::boundary_arm}
Lastly, we remark that the CFT predictions also agree with the known boundary arm exponents for the $\SLE_8$. Indeed, from~\textnormal{\cite[Eq.~(1.2)]{WuZhanSLEBoundaryArmExponents}}, 
the odd $(N-1)$-arm exponent equals $N(N-2)/8$
\textnormal{(}that is, $\alpha_{2n-1}^+$ with $2n=N$\textnormal{)}, 
and the even $(N-1)$-arm exponent equals $(N-1)^2/8$
\textnormal{(}that is, $\alpha_{2n}^+$ with $N-1=2n$\textnormal{)}.
With the Kac conformal weights 
$h_{1,N+1}(\kappa) = N(2(N+2)-\kappa)/2\kappa$
for the ``$N$-leg'' boundary operator $\Phi_{1,N+1}$,
we find agreement with a generalization of the OPE
\textnormal{(}see~\textnormal{\cite[Eq.~(4.16)]{PeltolaCFTSLE})} when $\kappa = 8$\textnormal{:}
\begin{align*}
\frac{N(N-2)}{8} \; = \; \; & h_{1,N+1}(8) - Nh_{1,2}(8) + Nh_{1,2}(8) 
\; = \; h_{1, N+1}(8) , && \textnormal{ $N$ even}, \\
\frac{(N-1)^2}{8} \; = \;\; & 
h_{1,N+1}(8) - Nh_{1,2}(8) + (N-1)h_{1,2}(8) 
\; = \; h_{1, N+1}(8) - h_{1,2}(8), && \textnormal{ $N$ odd}. 
\end{align*}
\end{remark}

\subsection*{Organization}

The structure of the subsequent sections 
of this article is the following.

In Section~\ref{sec::coulombgasintegrals_new}, we introduce the partition functions and relate them to both Coulomb gas integrals stemming from CFT, and to period matrices of hyperelliptic Riemann surfaces. In particular, we show that they have a determinantal structure. 
We prove most of the key properties of the partition functions $\LF_\beta$ in  Section~\ref{sec::coulombgasintegrals_new}.
The next Section~\ref{sec::ust} concerns the discrete UST model. 
The goal is to derive explicitly the law of the Peano curve in the scaling limit. 
The key ingredient is the identification step, which we establish by 
constructing a suitable martingale observable and analyzing it in detail. 
The scaling limit of the observable is uniquely determined by its boundary data (cf.~Proposition~\ref{prop::holo_cvg}) ---  
it is a conformal map onto a certain slit rectangle. 
The last Section~\ref{sec::ppf} 
is devoted to identifying the crossing probabilities of the UST Peano curves in the scaling limit, and using these results to compute the asymptotics of the pure partition functions $\PartF_\alpha$.

We also include four appendices in this article.
Appendix~\ref{app::matrices} contains simple computations relating the real integrals discussed above to loop integrals appearing in Section~\ref{sec::coulombgasintegrals_new}.
Appendix~\ref{app::examples} gives examples of partition functions. 
Appendix~\ref{app::SC_mappings} discusses Schwarz-Christoffel mappings. 
The last Appendix~\ref{app: marginal law} contains a standard martingale argument to derive the conditional laws of the scaling limit curves for each connectivity. We use it in the identification of the crossing probabilities in Theorem~\ref{thm::ust_crossing_proba}.

\subsection*{Acknowledgements}
\begin{itemize}
\item 
This material is part of a project that has received funding from the  European Research Council (ERC) under the European Union's Horizon 2020 research and innovation programme (101042460): 
ERC Starting grant ``Interplay of structures in conformal and universal random geometry'' (ISCoURaGe) 
and from the Academy of Finland grant number 340461 ``Conformal invariance in planar random geometry.''
E.P.~is also supported by 
the Academy of Finland Centre of Excellence Programme grant number 346315 ``Finnish centre of excellence in Randomness and STructures (FiRST)'' 
and by the Deutsche Forschungsgemeinschaft (DFG, German Research Foundation) under Germany's Excellence Strategy EXC-2047/1-390685813, 
as well as the DFG collaborative research centre ``The mathematics of emerging effects'' CRC-1060/211504053.

\smallbreak

\item 
H.W. is funded by Beijing Natural Science Foundation (JQ20001). H.W. is partly affiliated at Yanqi Lake Beijing Institute of Mathematical Sciences and Applications, Beijing, China. 

\smallbreak

\item 
We thank Xiaokui Yang for helpful discussions on complex analysis and Kalle Kyt{\"o}l{\"a} for pointing out useful references in log-CFT.
We are very grateful to the anonymous referee for several helpful suggestions that simplified many of the arguments and greatly shaped this article.
\end{itemize}


\smallskip{}

\section{Determinantal structure of Coulomb gas integrals for $c = -2$}
\label{sec::coulombgasintegrals_new}

Many correlation functions in conformal field theory can be written in terms of so-called Coulomb gas integrals~\cite{DF-multipoint_correlation_functions, DubedatEulerIntegralsCommutingSLEs, KytolaPeltolaConformalCovBoundaryCorrelation}.
In the present work, we apply this (initially heuristic) formalism to the case of
correlation functions arising from the scaling limit of 
UST models. 

We will consider a specific basis of such functions, denoted $\LF_{\beta}$ for $\beta \in \LP_N$.
Importantly and specifically to the present case, these Coulomb gas integrals can be written in terms of determinants of matrices involving $a$-periods of a hyperelliptic Riemann surface,  
which morally correspond to choices of various possible screening variables for the Coulomb gas integrals. 
Another remarkable feature of the basis functions $\LF_{\beta}$ is total positivity:
they can be chosen to be all simultaneously positive.
These functions are closely related to the scaling limit of the holomorphic UST observable of Section~\ref{sec::ust}, see Lemmas~\ref{lem::conformal_general_expansion}~\&~\ref{lem::ust_general_goal_new}. 

\smallbreak

In the Coulomb gas formalism of conformal field theory (CFT), 
one constructs correlation functions of vertex operators from a free-field representation 
involving certain exponentials of the free boson (Gaussian free field, GFF)~\cite[Chapter~9]{DMS:CFT}.
The resulting correlation functions have an explicit integral form. 
The additional determinantal structure of these correlation functions in the present special case of central charge $c = -2$ and $\SLE_\kappa$ parameter $\kappa = 8$ could be seen as a fermionic feature of the theory describing UST observables~\cite{Kausch:Symplectic_fermions}.

Specifically, we consider functions $\LF \colon \chamber_{2N} \to \C$, 
defined on the configuration space
\begin{align} \label{eq: chamber}
\chamber_{2N} := \big\{ \boldsymbol{x} = (x_{1},\ldots,x_{2N}) \in \R^{2N} \colon x_{1} < \cdots < x_{2N} \big\} .
\end{align}
For fixed $\boldsymbol{x}$, the value of $\LF(\boldsymbol{x})$
is written as a Dotsenko-Fateev type integral~\cite{DF-multipoint_correlation_functions, DubedatEulerIntegralsCommutingSLEs},
\begin{align} \label{eq: ansatz8}
\LF(\boldsymbol{x}) :=  \int_{\Gamma} f(\boldsymbol{x};u_1,\ldots,u_\ell) \; \ud u_1 \cdots \ud u_\ell ,
\end{align}
where $\Gamma \subset \C^\ell$ is an integration surface for the integration variables $u_1, \ldots, u_\ell$ 
(screening variables),
belonging to some compact subset of $\C^\ell$, with $\ell \in \bZnn$, 
and the integrand $f$ is a branch of the multivalued function
\begin{align} 
\label{eq: integrand}
f (\boldsymbol{x};u_1,\ldots,u_\ell) 
:= \; & f^{(0)}(\boldsymbol{x})
\prod_{1\leq r<s\leq \ell}(u_{s}-u_{r}) 
\prod_{\substack{1\leq i\leq 2N \\ 1\leq r\leq \ell}}
(u_{r}-x_{i})^{-1/2} 
= f^{(0)}(\boldsymbol{x}) \; 
\frac{\Delta (\boldsymbol{u})}{\wfuncnobranch(\boldsymbol{u}; \boldsymbol{x})} , 
\end{align} 
where the prefactor $f^{(0)}$ is independent of the integration variables,
\begin{align} 
\label{eqn::def_fnod} 
f^{(0)}(\boldsymbol{x}) := \; &
\prod_{1\leq i<j\leq 2N}(x_{j}-x_{i})^{1/4} ,
\end{align}
$\Delta$ is the Vandermonde determinant only involving the integration variables $\boldsymbol{u} = (u_1, \ldots, u_\ell)$, 
\begin{align} \label{eqn::def_vander} 
\Delta (\boldsymbol{u}) 
:= \prod_{1\leq r<s\leq \ell}(u_{s}-u_{r}) ,
\end{align}
and $\wfuncnobranch$ is defined as the (multivalued) product
\begin{align} \label{eqn::def_hypercurve_prod}
\wfuncnobranch(\boldsymbol{u}; \boldsymbol{x}) := \; &
\prod_{r = 1}^{\ell} \wfuncnobranch(u_r; \boldsymbol{x}) ,
\qquad 
\wfuncnobranch(u; \boldsymbol{x}) := \prod_{j=1}^{2N}(u-x_{j})^{1/2} .
\end{align}

Note that $f^{(0)}$ is a Coulomb gas correlation function without any screening --- or an $\SLE_8(2,2,\ldots,2)$ partition function in the imaginary geometry framework~\cite{DubedatSLEFreefield, MillerSheffieldIG4, BerestyckiLaslierRayDimersIG}, where certain types of SLE variants are coupled with the GFF.

We usually keep the variables $\boldsymbol{x}$ in~\eqref{eq: chamber},
but $\LF$ also extends
to a multivalued function~on
\begin{align} \label{eq: complex chamber}
\mathfrak{Y}_{2N} := \big\{ \boldsymbol{x} = (x_{1},\ldots,x_{2N}) \in \C^{2N} \colon x_{i} \neq x_{j} \textnormal{ for all } 1 \leq i \neq j \leq 2N \big\} .
\end{align}
As a function of the integration variables 
\begin{align*}
\boldsymbol{u} = (u_1, \ldots, u_\ell) \in 
\Wchamber^{(\ell)} = \; \Wchamber_{x_{1},\ldots,x_{2N}}^{(\ell)}
:= \big( \C\setminus \{x_{1},\ldots,x_{2N}\} \big)^{\ell} ,
\end{align*}
$f(\boldsymbol{x};\cdot)$ has branch points at $u_r = x_j$ for all $r$ and $j$, and zeros at $u_{r} = u_{s}$ for all $r \neq s$.
To define a branch of $f(\boldsymbol{x};\cdot)$ on a simply connected subset of $\Wchamber^{(\ell)}$, we can just determine its value at some point in this set, 
and then define its value for all other points by analytic continuation.

To construct the desired partition functions, we take $\ell = N$, ensuring the scaling property
\begin{align*}
\LF(\lambda x_{1},\ldots, \lambda x_{2N}) = \lambda^{N/4} \LF(x_{1},\ldots,x_{2N}) ,
\end{align*}
which corresponds to the scale-covariance of a CFT primary field of weight
$h_{1,2} = \frac{6-\kappa}{2\kappa} = - \frac{1}{8}$ with $\kappa = 8$. 
We must also choose the integration contours $\Gamma$ judiciously, as discussed below in detail. 
In fact, the choice of the integration contours is the most intricate part of the Coulomb gas formalism, and it is far from clear how to implement this, e.g., in the setup of SLE/GFF couplings, or a timelike (imaginary) version of Liouville theory, e.g.,~\cite{SantachiaraViti,GKR:Imaginary_Liouville}.

\smallbreak

The purpose of this section is to introduce the relevant functions $\LF_{\beta}$ in Coulomb gas integral form and show that they also have a determinantal structure. 
We first discuss the underlying hyperelliptic Riemann surfaces in Sections~\ref{subsec:hyperelliptic}--\ref{subsec::period_matrix_homology}, and the choices of integration contours in Section~\ref{subsec::Coulomb_gas_Fbeta}, which contains the definition of the basis functions $\{ \LF_{\beta} \colon \beta \in \LP_N\}$ (Definition~\ref{def: Lbeta}).
Sections~\ref{subsec::rotation_symmetry}--\ref{subsec::positivityproof} address salient properties of these functions: 
rotation symmetry, M\"obius covariance, partial differential equations, asymptotic properties, and total positivity 
--- which together with the summary in Section~\ref{subsec::ppf_concluding} will prove Theorem~\ref{thm::coulombgasintegral}.

\subsection{Integration contours and one-forms on a hyperelliptic Riemann surface}
\label{subsec:hyperelliptic}
Throughout, we fix $N \geq 1$ and $\beta \in \LP_N$ with link endpoints ordered as in~\eqref{eq: link pattern ordering}.
We define the following disjoint curves on $\C$ indexed by $r \in \{1,2,\ldots, N\}$: 
\begin{itemize}[leftmargin=2em]
\item $\gamma_r^\beta$ is a simple curve 
started from $x_{a_r}$, ending at $x_{b_r}$, and such that 
$\gamma_r^\beta \cap \gamma_s^\beta = \emptyset$ for all $r \neq s$, 
and $\gamma_r^\beta \subset \overline{\HH}^* := \{ z \in \C \colon \Im(z) < 0 \}$; and

\smallbreak

\item
$\acycle^\beta_r$ is a clockwise oriented simple loop surrounding $\gamma_r^\beta$ and no other $\gamma_s^\beta$, 
and such that $\acycle_r^\beta \cap \acycle_s^\beta = \emptyset$ for all $r \neq s$.
\end{itemize}
Then, as illustrated in Figure~\ref{fig: homology cycles},
the (homology classes of) $N-1$ of these loops,
e.g., $\{\acycle^\beta_2, \acycle^\beta_3, \ldots, \acycle^\beta_{N}\}$, 
form a half of a canonical homology basis (namely the $a$-cycles) 
for the first integral
homology group $H_1(\Sigma,\Z)$ of the hyperelliptic Riemann surface $\Sigma = \Sigma_{x_1,\ldots,x_{2N}}$ of genus $g = N-1$ associated to the hyperelliptic curve
\begin{align} \label{eq: hyperelliptic_curve}
\Big\{ (u,\wfuncnobranch) \in \C^2 \colon \wfuncnobranch^2 = \prod_{j = 1}^{2N} (u - x_j) \Big\} .
\end{align} 
(We also include here the case $g=1$ of an elliptic curve and $g=0$ with trivial homology.)
See, e.g., the book~\cite[Chapter~III.7]{RiemannSurfacesFarkasKra} for standard facts concerning such Riemann surfaces. 
Recall that $\Sigma$ is a two-sheeted branched covering of the Riemann sphere ramified at the points $x_1,\ldots,x_{2N}$.
In our case, none of the ramification points is at infinity 
--- note that there are two points $\infty^+$ and $\infty^-$ 
that correspond to infinity, lying on the two sheets $\Sigma^+$ and $\Sigma^-$
of $\Sigma$ (that is, on the two copies of the Riemann sphere).
In the case of $g=0$, $\Sigma_{x_1,x_2}$ is the surface on which 
$u \mapsto \sqrt{(u - x_1) (u - x_2)}$ is single-valued, obtained by gluing two copies of the Riemann sphere together along the cut $[x_1, x_2]$.
In general, $\Sigma_{x_1,\ldots,x_{2N}}$ can be formed by choosing $N$ disjoint and non-intersecting cuts (e.g., the cuts $\gamma_r^\beta$ from $x_{a_r}$ to $x_{b_r}$ according to the link pattern $\beta$) and gluing two copies of the Riemann sphere together along these cuts (then clearly the genus of $\Sigma$ is $N-1$).
The function
\begin{align} \label{eqn::def_hypercurve}
u \quad \mapsto \quad \wfuncnobranch(u; \boldsymbol{x}) := 
\prod_{j=1}^{2N}(u-x_{j})^{1/2} 
\end{align}
admits a single-valued meromorphic branch on $\Sigma$ (see, e.g.,~\cite[Chapter~III.7.4]{RiemannSurfacesFarkasKra}), which determines the complex structure of $\Sigma$. We make the standard choice\footnote{Note that this branch choice is common to all $\beta \in \LP_N$, and all choices of the branch cuts $\gamma_1^\beta$, $\ldots$, $\gamma_N^\beta$ with various $\beta \in \LP_N$ correspond to the same Riemann surface.} where the branch is real and positive for $u > x_{2N}$ on $\Sigma^+$, which we denote as $\wfuncnobranch(\cdot \,; \boldsymbol{x}) = \wfunc(\cdot \,; \boldsymbol{x})$.

\begin{figure}[ht!]
\begin{subfigure}[b]{\textwidth}
\begin{center}
\includegraphics[width=0.7\textwidth]{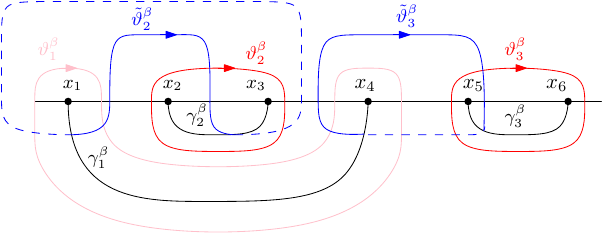}
\end{center}
\caption{Branch cuts $\gamma_1^\beta, \gamma_2^\beta, \gamma_3^\beta$ and 
loops $\acycle_1^\beta, \acycle_2^\beta, \acycle_3^\beta$
and $\bcycle_2^\beta, \bcycle_3^\beta$ 
associated to the link pattern $\beta = \{ \{1,4\}, \{2,3\}, \{5,6\} \}$.}
\end{subfigure}
\begin{subfigure}[b]{\textwidth}
\begin{center}
\includegraphics[width=0.7\textwidth]{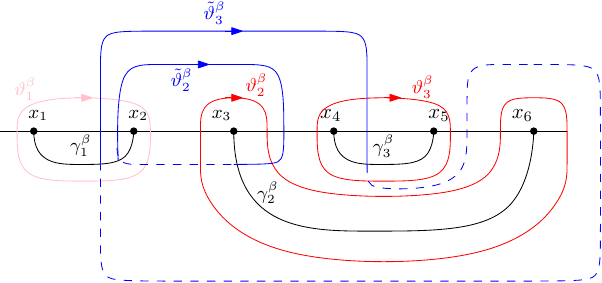}
\end{center}
\caption{Branch cuts $\gamma_1^\beta, \gamma_2^\beta, \gamma_3^\beta$ and 
loops $\acycle_1^\beta, \acycle_2^\beta, \acycle_3^\beta$
and $\bcycle_2^\beta, \bcycle_3^\beta$ 
associated to the link pattern $\beta = \{ \{1,2\}, \{3,6\}, \{4,5\} \}$.}
\end{subfigure}
\caption{\label{fig: homology cycles}
In this figure, the solid and dashed lines lie on different Riemann sheets $\Sigma^+$ and $\Sigma^-$.
The red loops $\smash{\acycle_2^\beta, \acycle_3^\beta, \ldots, \acycle_{N}^\beta}$ ($a$-cycles)
and the blue loops $\smash{\bcycle_2^\beta, \bcycle_3^\beta,\ldots,\bcycle_{N}^\beta}$ ($b$-cycles)
form a canonical homology basis for $H_1(\Sigma, \Z)$. 
The additional lighter red loop $\smash{\acycle_{1}^\beta}$
is a linear combination of the others in $H_1(\Sigma, \Z)$, see Equation~\eqref{eq::relation_of_acycles}.
}
\end{figure}

A useful basis for holomorphic differentials on $\Sigma$ is given by the $N-1$ one-forms 
\begin{align} \label{eqn::parametera_polynomial}
\omega_{r} 
:= \frac{u^{ r }\ud u}{\prod_{j=1}^{2N}(u-x_{j})^{1/2}} 
= \frac{u^{ r }\ud u}{\wfunc(u; \boldsymbol{x})} , 
\qquad \textnormal{for } r\in \{0,1,\ldots,N-2\} ,
\end{align}
see, e.g.,~\cite[Corollary~1 on page~98]{RiemannSurfacesFarkasKra}.
We also use the one-form 
\begin{align*}
\omega_{N-1} 
:= \frac{u^{N-1} \ud u}{\prod_{j=1}^{2N}(u-x_{j})^{1/2}} 
= \frac{u^{N-1} \ud u}{\wfunc(u; \boldsymbol{x})} , 
\end{align*}
which is not holomorphic but meromorphic: it has two simple poles with opposite residues 
$-1$ and $+1$ at the two points $\infty^+$ and $\infty^-$ 
that correspond to infinity.

Let us also recall that integrals 
$\ointclockwise_{\acycle^\beta_{r}} \theta$
of holomorphic or meromorphic one-forms $\theta$ on $\Sigma$ 
(i.e., Abelian differentials)
over the $a$-cycles are termed \emph{$a$-periods}. 
While the integral of a holomorphic one-form only depends on the homology class of the loop, the integral of a meromorphic one-form can also depend on the actual loop: 
if $\theta$ has non-zero residues at some points on $\Sigma$, 
then the value of $\ointclockwise_{\acycle} \theta$ depends on whether the loop $\acycle$ surrounds any of those points. 
For example, for the meromorphic one-form $\omega_{N-1}$, we have 
\begin{align} \label{eq: mero_one_form_residue}
- 2 \pi \ii 
\; = \; 2 \pi \ii \; \underset{\infty^+}{\mathrm{Res}} \, \omega_{N-1}
\; = \; \sum_{r=1}^N \ointclockwise_{\acycle^\beta_{r}} \omega_{N-1} ,
\end{align}
since $\acycle_1^\beta + \cdots + \acycle_{N}^\beta$ is a contractible loop on $\Sigma^+$ surrounding $\infty^+$ counterclockwise 
--- see, e.g.,~\cite[Chapter~III.3]{RiemannSurfacesFarkasKra} and Figure~\ref{fig: homology cycles}.

\subsection{Matrices involving a-periods}
\label{subsec::period_matrix_homology}

In our application to the UST model, we will use the following matrices of $a$-periods of the holomorphic one-forms~\eqref{eqn::parametera_polynomial} on $\Sigma$, and their line integral counterparts: 
\begin{align} 
\label{eq::P_matrices}
& P_{\beta}^{\circ} := \Big( \ointclockwise_{\acycle^\beta_{r}} \omega_{s-1} \Big)_{r,s=1}^{N} 
\;\;\;\, \qquad \textnormal{and} \qquad
P_{\beta} := \Big( \landupint_{x_{a_r}}^{x_{b_r}} \omega_{s-1}  \Big)_{r,s=1}^{N} , \\
\label{eq::A_matrices}
& A_\beta^\circ := \Big( \ointclockwise_{\acycle^\beta_{r+1}} \omega_{s-1} \Big)_{r,s=1}^{N-1} 
\qquad \textnormal{and} \qquad
A_\beta := \Big( \landupint_{x_{a_{r+1}}}^{x_{b_{r+1}}} \omega_{s-1} \Big)_{r,s=1}^{N-1} ,
\end{align}
where the integration symbols ``$\landupint$'' indicate that the integration\footnote{Note that these integrals are convergent, since the blow-ups 
at the endpoints of the contours are mild enough.}
is performed in the upper half-plane,
and we use the convention that $\det A_{{\vcenter{\hbox{\includegraphics[scale=0.2]{figures/link-0.pdf}}}}}^{\circ} = 1$ and $\det A_{{\vcenter{\hbox{\includegraphics[scale=0.2]{figures/link-0.pdf}}}}} = \tfrac{1}{2}$ when $N=1$. 
We will also repeatedly use the relation  
\begin{align} \label{eq::relation_of_acycles}
\acycle_{1}^\beta = -(\acycle_2^\beta + \acycle_3^\beta + \cdots + \acycle_{N}^\beta) \quad 
\textnormal{in } H_1(\Sigma, \Z) .
\end{align}

Let us record as a lemma the well-known but crucial property that all of these matrices are invertible. 
We also include a short argument for the readers' convenience, as well as an explicit relation between the matrices $A_\beta^\circ$, $P_{\beta}^{\circ}$ 
involving the $a$-periods with the matrices 
$A_\beta$, $P_{\beta}$ 
involving integrals over the real line.
(The latter are much more useful in computations.)

\begin{lemma} \label{lem: matrices_invertible}
The four matrices defined in \textnormal{(\ref{eq::A_matrices},~\ref{eq::P_matrices})} are invertible.
\end{lemma}

\begin{proof}
Note that $A_\beta^\circ$ (resp.~$A_\beta$) is the principal submatrix of $P_{\beta}^{\circ}$ (resp.~$P_\beta$) 
obtained by removing the first row and the last column. 
Furthermore, after adding all of the other rows of $P_{\beta}^{\circ}$ to its first row, recalling the relation~\eqref{eq::relation_of_acycles} 
and the residue~\eqref{eq: mero_one_form_residue}, 
we see that $\det P_{\beta}^{\circ}$ equals the determinant of
the matrix whose first row comprises $N-1$ zeros and $-2 \pi \ii$, while its other rows coincide with those of $P_{\beta}^{\circ}$. 
Hence, 
\begin{align} \label{eq::relate_A_and_P}
\det P_{\beta}^{\circ} = 2 \pi \ii \, (-1)^N \det A_{\beta}^{\circ} .
\end{align}

Next, relating the integrals over the $a$-cycles to the integrals over the corresponding real intervals 
(cf.~Lemma~\ref{lem: relation_of_Pcirc_and_P_matrix} in Appendix~\ref{app::matrices}) 
we see that $O_\beta \, P_{\beta} = P_{\beta}^{\circ}$, 
where $O_\beta$ is an explicit upper-triangular matrix defined in~\eqref{eq::Mbeta_explicit}.
In particular, the upper-triangular structure 
also implies that
$(\hat{O}_\beta)_{1,1} \, A_\beta = A_\beta^\circ$, 
where $(\hat{O}_\beta)_{1,1}$ is the (invertible) principal submatrix of $O_\beta$ obtained by removing the first row and the first column. 
Concretely, we have
\begin{align} \label{eq::relate_det_and_circle}
\det P_{\beta} = 2^{-N} \det P_{\beta}^{\circ} 
\qquad \textnormal{and} \qquad
\det A_{\beta} = 2^{1-N} \det A_{\beta}^{\circ} . 
\end{align}
It now suffices to show that the matrix $A_\beta^\circ$ is invertible. 
To this end, we consider the equation $A_{\beta}^{\circ} \boldsymbol{\upsilon}^t = \boldsymbol{0}$ 
for $\boldsymbol{\upsilon} = (\upsilon_1, \ldots, \upsilon_{N-1}) \in \C^{N-1}$ in terms of holomorphic one-forms  
\begin{align*}
\theta := \; & \sum_{s=1}^{N-1} \upsilon_s \, \omega_{s-1} , \qquad \upsilon_1, \ldots, \upsilon_{N-1} \in \C ,
\end{align*}
such that 
\begin{align} \label{eq: theta_integral_zero}
\ointclockwise_{\acycle^\beta_{r+1}} \theta = \; & 0 , \qquad \textnormal{for all } r \in \{1,2,\ldots,N-1\} .
\end{align}
Since $\{\omega_0, \ldots, \omega_{N-2}\}$ is a basis for the space of holomorphic differentials on $\Sigma$, and $\{\acycle^\beta_2, \ldots, \acycle^\beta_{N}\}$ are the $a$-cycles in a canonical homology basis for $H_1(\Sigma,\Z)$,
Equation~\eqref{eq: theta_integral_zero} implies that $\upsilon_1 = \cdots = \upsilon_{N-1} = 0$ 
(see, e.g.,~\cite[Proposition~III.3.3 in Chapter~III.3]{RiemannSurfacesFarkasKra}). 
\end{proof}

\subsection{Coulomb gas basis functions}
\label{subsec::Coulomb_gas_Fbeta}

To discuss concrete functions of complex variables, we identify $\C \cup \{\infty\}$ with the Riemann sphere, regarding it as $\Sigma^+$, one of the sheets of $\Sigma = \Sigma_{x_1,\ldots,x_{2N}}$ --- so that the solid paths in Figure~\ref{fig: homology cycles} lie on $\C$ and $\infty$ corresponds to $\infty^+ \in \Sigma$.
We then choose a branch 
(depending on $\beta$) 
of the multivalued integrand in~\eqref{eq: ansatz8} (with $\ell = N$) 
which is real and positive on 
\begin{align} \label{eq:: branch choice set}
\mathcal{R}_\beta :=  \big\{\boldsymbol{u} \in \Wchamber^{(N)}  \colon x_{a_r} < u_r < x_{a_r+1} , \; \textnormal{ for all } 1 \leq r \leq N \big\} .
\end{align}
For definiteness, we denote this branch choice as $f_\beta(\boldsymbol{x};\cdot) \colon \mathcal{R}_\beta \to \R_{>0}$. 
We extend the definition of $f_\beta(\boldsymbol{x};\cdot)$ to $\Wchamber^{(N)}$
via analytic continuation, so it becomes a multivalued function 
(single-valued but complex valued on the complement of the branch cuts)
\begin{align*}
f_\beta(\boldsymbol{x};\cdot) \;\colon\; \Wchamber^{(N)} \to \C .
\end{align*}

\begin{definition} \label{def: Lbeta}
With these choices, we define the Coulomb gas integral functions $\LF_\beta, \LF_\beta^\circ \colon \chamber_{2N} \to \C$ as\footnote{Note that the integrals in $\LF_\beta$ are convergent, since the blow-ups 
as $u_r$ tends to $\{ x_{a_r}, x_{b_r}\}$ are mild enough.}
\begin{align} 
\label{eq: Def of F beta}
\LF_\beta (\boldsymbol{x}) 
:= \; & \landupint_{x_{a_1}}^{x_{b_1}} \ud u_1 
\landupint_{x_{a_2}}^{x_{b_2}} \ud u_2 
\cdots \landupint_{x_{a_N}}^{x_{b_N}} \ud u_N 
\; f_\beta(\boldsymbol{x};\boldsymbol{u}) , \\
\label{eq: Def of F beta loops}
\LF_\beta^\circ (\boldsymbol{x}) 
:= \; & \ointclockwise_{\acycle^\beta_1}  \ud u_1 \ointclockwise_{\acycle^\beta_2}  \ud u_2 \cdots \ointclockwise_{\acycle^\beta_N} \ud u_N 
\; f_\beta(\boldsymbol{x};\boldsymbol{u}) .
\end{align}
\end{definition}
We also extend $\LF_\beta$ and $\LF_\beta^\circ$ to multivalued functions on $\mathfrak{Y}_{2N}$~\eqref{eq: complex chamber}.

\begin{remark}
The integrals in $\LF_\beta$ are evaluated, avoiding the branch cuts, by moving the points from above the real line: for instance, for 
$\beta  = \vcenter{\hbox{\includegraphics[scale=0.3]{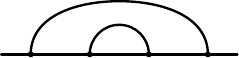}}} = \{\{1, 4\}, \{2, 3\} \}$, we have
\begin{align*}
\landupint_{x_1}^{x_4} \ud u_1 \; f_{\vcenter{\hbox{\includegraphics[scale=0.2]{figures/link-2.pdf}}}} (\boldsymbol{x};\boldsymbol{u})   
= \; & 
\landupint_{x_1}^{x_2} \ud u_1 \; | f_{\vcenter{\hbox{\includegraphics[scale=0.2]{figures/link-2.pdf}}}} (\boldsymbol{x};\boldsymbol{u}) | 
+ \ii \, \landupint_{x_2}^{x_3} \ud u_1 \; | f_{\vcenter{\hbox{\includegraphics[scale=0.2]{figures/link-2.pdf}}}} (\boldsymbol{x};\boldsymbol{u}) | 
- \landupint_{x_3}^{x_4} \ud u_1 
\; | f_{\vcenter{\hbox{\includegraphics[scale=0.2]{figures/link-2.pdf}}}}  (\boldsymbol{x};\boldsymbol{u}) | .
\end{align*}
\end{remark}

Importantly and specifically to the present case where the $\SLE_\kappa$ parameter is $\kappa=8$ and the central charge is $c=-2$, 
these Coulomb gas integrals can be written in terms of determinants of the matrices involving $a$-periods introduced in Section~\ref{subsec:hyperelliptic}. 
To this end, we observe that the latter can be evaluated in terms of the Vandermonde determinant: 
\begin{align} \label{eq: VandermondeA}
\begin{split}
\det A_{\beta}^{\circ} (\boldsymbol{x}) 
= \; & \ointclockwise_{\acycle^\beta_2} \ud u_2
\cdots \ointclockwise_{\acycle^\beta_N} \ud u_N 
\; \frac{\Delta (\boldsymbol{\dot{u}}_1)}{\wfunc(\boldsymbol{\dot{u}}_1; \boldsymbol{x})} \\
= \; & (-1)^{r+1} \, \ointclockwise_{\acycle^\beta_1} \ud u_1 
\cdots \ointclockwise_{\acycle^\beta_{r-1}} \ud u_{r-1} 
\ointclockwise_{\acycle^\beta_{r+1}} \ud u_{r+1} 
\cdots \ointclockwise_{\acycle^\beta_{N}} \ud u_{N} 
\; \frac{\Delta (\boldsymbol{\dot{u}}_r)}{\wfunc(\boldsymbol{\dot{u}}_r; \boldsymbol{x})} ,
\end{split}
\end{align}
for any $r \in \{1,2,\ldots,N\}$, 
where we write $\boldsymbol{\dot{u}}_r := (u_1, \ldots, u_{r-1},u_{r+1},\ldots,u_{N})$ 
and $\Delta$ is defined in~\eqref{eqn::def_vander},  
and $\wfunc$ in~\eqref{eqn::def_hypercurve_prod}.
Note that $\det A_{\beta}^{\circ} (\boldsymbol{x})$ in~\eqref{eq: VandermondeA} is well-defined only 
up to a branch choice for the function  
$\smash{u \mapsto \wfunc(u; \boldsymbol{x}) = \prod_{j}(u-x_{j})^{1/2}}$ on $\Sigma$. 
With our standard choice\footnote{Here, we identify the Riemann sphere $\C \cup \{\infty\} = \Sigma^+ \ni u$ as the sheet containing the $a$-cycles.}  
$\wfunc(u \,; \boldsymbol{x})$ which is real and positive when $u > x_{2N}$, we can state the explicit relations between $\LF_\beta$, $\LF_\beta^\circ$, and $\det A_{\beta}^{\circ}$:

\begin{proposition} \label{prop::two_Fs_as_determinants}
We have 
\begin{align} \label{eq: relate_F_and_Fcirc}
\LF_\beta (\boldsymbol{x}) = 2^{-N} \LF_\beta^\circ (\boldsymbol{x}) 
\; \neq 0  , \qquad \textnormal{ for all } \boldsymbol{x} \in \chamber_{2N} , 
\end{align}
and using the $a$-period matrix $A_{\beta}^{\circ} = A_{\beta}^{\circ}(\boldsymbol{x})$ from~\eqref{eq::A_matrices}, we have 
\begin{align} 
\label{eqn::PartFcirc_detAcirc}
\frac{\LF_\beta^\circ(\boldsymbol{x})}{2 \pi \ii \, f^{(0)} (\boldsymbol{x})} 
= \; & \zeta^\circ_{\beta} \, \det A_{\beta}^{\circ} (\boldsymbol{x}) , \qquad \boldsymbol{x} \in \chamber_{2N} , 
\end{align}
where $f^{(0)}$ is defined in~\eqref{eqn::def_fnod} and  
$\zeta^\circ_{\beta} := \ii^{\sum_{s=1}^N (2N-a_s+2)}$ is a phase factor. 
\end{proposition}

While making the expression for $\LF_{\beta}^{\circ}$ less symmetric, this form~(\ref{eq: VandermondeA},~\ref{eqn::PartFcirc_detAcirc}) 
is quite useful for simplifying formulas.
We include the $\beta$-dependent phase factors here due to the choice of branch~\eqref{eq:: branch choice set} of the integrand $f_\beta(\boldsymbol{x};\cdot)$,  which is manifest for the total positivity of the collection of functions $\{\LF_{\beta} \colon \beta \in \LP_N\}$ (see Proposition~\ref{prop: positivity}).

\begin{proof}
Using the Vandermonde determinant and the relation~\eqref{eq::relate_A_and_P}, we see that, up to some multiplicative phase factor,  
$\LF_\beta^\circ(\boldsymbol{x})$
equals 
$f^{(0)} (\boldsymbol{x}) \, \det P_{\beta}^{\circ}(\boldsymbol{x})
= 2 \pi \ii \, (-1)^N \, f^{(0)} (\boldsymbol{x}) \, \det A_{\beta}^{\circ}(\boldsymbol{x})$. 
We thus obtain the asserted identity~\eqref{eqn::PartFcirc_detAcirc} 
from the Vandermonde determinant~\eqref{eq: VandermondeA} and our branch choices.
Lemma~\ref{lem: matrices_invertible} implies that the function is non-zero, 
and the asserted identity~\eqref{eq: relate_F_and_Fcirc} follows using 
Lemma~\ref{lem: relation_of_Pcirc_and_P_matrix} from Appendix~\ref{app::matrices} with $2^N = \det O_\beta$. 
\end{proof}

\subsection{Rotation symmetry}
\label{subsec::rotation_symmetry}

We next record a simple 
property of $\LF_\beta$ when its variables are cyclically permuted within $\mathfrak{Y}_{2N}$.
To state it, we denote by 
$\sigma = \bigl(\begin{smallmatrix}
  1 & 2 & 3 & \cdots & 2N-1 & 2N \\
  2 & 3 & 4 & \cdots &  2N  & 1
\end{smallmatrix}\bigr)$
the cyclic counterclockwise permutation of the indices, and 
for each $\beta \in \LP_N$, we denote by $\sigma(\beta) \in \LP_N$ the link pattern obtained from $\beta$ via permuting the indices by $\sigma$ and then ordering the link endpoints appropriately. 
E.g., with $N=2$ we have
\begin{align*}
\beta = \; &
\vcenter{\hbox{\includegraphics[scale=0.3]{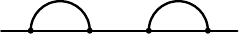}}} = \{\{1, 2\}, \{3, 4\} \} 
\qquad \longmapsto \qquad
\sigma(\beta) = \vcenter{\hbox{\includegraphics[scale=0.3]{figures/link-2.pdf}}} = \{\{1, 4\}, \{2, 3\} \}
\\
\beta = \; &
\vcenter{\hbox{\includegraphics[scale=0.3]{figures/link-2.pdf}}} = \{\{1, 4\}, \{2, 3\} \}
\qquad \longmapsto \qquad
\sigma(\beta) = \vcenter{\hbox{\includegraphics[scale=0.3]{figures/link-1.pdf}}} = \{\{1, 2\}, \{3, 4\} \} .
\end{align*}

\begin{lemma} \label{lem: rotation symmetry}
Let $\boldsymbol{\rho} \colon [0,1] \to \mathfrak{Y}_{2N}$ 
be a path from 
$\boldsymbol{\rho}(0) = \boldsymbol{x} = (x_{1},x_{2},\ldots,x_{2N-1},x_{2N}) \in \chamber_{2N}$
to $\boldsymbol{\rho}(1) := (x_{2},x_{3},\ldots,x_{2N-1},x_{2N},x_{1}) \in \mathfrak{Y}_{2N}$ 
such that $\boldsymbol{\rho} = (\rho_1, \rho_2, \ldots, \rho_{2N})$ satisfy
\begin{align*}
\begin{cases}
\Im(\rho_j(t)) = 0 , & \textnormal{for all } j \in \{1,2,\ldots,2N-1\} , \\
\Im(\rho_j(t)) \geq 0 , & \textnormal{for } j = 2N ,
\end{cases}
\qquad \textnormal{for all } t \in [0,1] .
\end{align*}
Then, we have
\begin{align} \label{eq: rotation symmetry}
\LF_{\sigma(\beta)} (\boldsymbol{\rho}(1))
= e^{\frac{\pi \ii}{4}} \LF_\beta (\boldsymbol{\rho}(0)) .
\end{align}
\end{lemma}

\begin{proof}
Note that the path $\boldsymbol{\rho}$ transforms the integration contours associated to $\beta$ in~\eqref{eq: Def of F beta} into the integration contours associated to $\sigma(\beta)$ in~\eqref{eq: Def of F beta}.
Hence, it is clear from the definition~\eqref{eq: Def of F beta} that~\eqref{eq: rotation symmetry} holds up to some multiplicative phase factor. To find it, we simplify $\LF_\beta = 2^{-N} \LF_\beta^\circ$ using Proposition~\ref{prop::two_Fs_as_determinants}: 
choosing $r$ in~\eqref{eq: VandermondeA} such that $b_r=2N$ (in this case, $a_r = 2r-1$), 
we see that $\boldsymbol{\rho}$ gives rise to the following phase factors:
\begin{itemize}[leftmargin=2em]
\item the factor $f^{(0)}(\boldsymbol{x})$ in~\eqref{eqn::PartFcirc_detAcirc} gets a phase 
$\exp( \frac{\pi \ii }{4} (2N-1))$; and

\smallbreak

\item the integral in~\eqref{eqn::PartFcirc_detAcirc} gets a phase $\exp( - \frac{\pi \ii }{2} (N-1))$. 
\end{itemize}
The asserted equality~\eqref{eq: rotation symmetry} follows by collecting the overall phase factor. 
\end{proof}

\subsection{M\"obius covariance}
\label{subsec::covariance}

We next verify the 
covariance property~\eqref{eqn::USTCOV} for Theorem~\ref{thm::coulombgasintegral}.

\begin{proposition} \label{prop: full Mobius covariance F}
For each $\beta \in \LP_N$, the function $\LF_{\beta}$ satisfies  
the M\"obius covariance~\eqref{eqn::USTCOV} 
for all M\"obius maps $\varphi \colon \HH \to \HH$ of the upper half-plane\textnormal{:}
writing $\varphi(\boldsymbol{x}) := (\varphi(x_{j+1}) \ldots , \varphi(x_{2N}) , \varphi(x_{1}) , \ldots , \varphi(x_{j}))$, 
\begin{align*}
\LF_{\varphi(\beta)}(\varphi(\boldsymbol{x}))
= \prod_{i=1}^{2N} \varphi'(x_{i})^{1/8} 
\times \LF_{\beta}(\boldsymbol{x}) ,
\end{align*}
where 
$\varphi(x_{j+1}) < \varphi(x_{j+2}) < \cdots < \varphi(x_{2N}) < \varphi(x_{1}) < \varphi(x_{2}) < \cdots  < \varphi(x_{j})$ with $j \in \{0,1,2,\ldots,2N-1\}$, and where $\varphi(\beta) \in \LP_N$ 
is the link pattern obtained from $\beta$ via permuting the indices according to the permutation of the boundary points induced by $\varphi$ and then ordering the link endpoints appropriately.  
\end{proposition}

\begin{proof}
Let $\varphi \colon \HH \to \HH$ be a M\"obius map. 
It is straightforward to check that $\frac{\varphi(z)-\varphi(w)}{z-w} = \sqrt{\varphi'(z)} \sqrt{\varphi'(w)}$ for any $z,w \in \overline{\HH}$ 
(see, e.g.,~\cite[Lemma~4.7]{KytolaPeltolaPurePartitionFunctions}). 
It immediately follows that $f^{(0)}$ defined in~\eqref{eqn::def_fnod} satisfies the M\"obius covariance
\begin{align*} 
f^{(0)}(\varphi(\boldsymbol{x}))
= \; & \prod_{i=1}^{2N} \varphi'(x_{i})^{(2N-1)/8} 
\times f^{(0)}(\boldsymbol{x}) .
\end{align*}
Moreover, if
$\varphi(x_{1}) < \varphi(x_{2}) < \cdots < \varphi(x_{2N})$, or 
$\varphi(x_{2}) < \varphi(x_{3}) < \cdots < \varphi(x_{2N}) < \varphi(x_{1})$, 
the homotopy 
type of the integration surface 
$\acycle_2^\beta \times \cdots \times \acycle_N^\beta$ 
does not change while moving the points $x_{1},\ldots,x_{2N}$ to their images 
$\varphi(x_{1}),\dots,\varphi(x_{2N})$.
Hence, after making the change of variables $u_s := \varphi(v_s)$ 
in~\eqref{eq: VandermondeA} with $r=1$, recalling the branch choice $\wfuncnobranch(\cdot \,; \boldsymbol{x}) = \wfunc(\cdot \,; \boldsymbol{x})$ of
\eqref{eqn::def_hypercurve},  
using the identity $\frac{\varphi(z)-\varphi(w)}{z-w} = \sqrt{\varphi'(z)} \sqrt{\varphi'(w)}$ we obtain 
\begin{align} \label{eqn::DETCOV}
\det A_{\varphi(\beta)}^{\circ} (\varphi(\boldsymbol{x})) 
= \; & \frac{\zeta^\circ_{\beta}}{\zeta^\circ_{\varphi(\beta)}} \times  \prod_{i=1}^{2N} \varphi'(x_i)^{-(N-1)/4} 
\times \det A_{\beta}^{\circ} (\boldsymbol{x}) .
\end{align}
If 
$\varphi(x_{j+1}) < \varphi(x_{j+2}) < \cdots < \varphi(x_{2N}) < \varphi(x_{1}) < \varphi(x_{2}) < \cdots  < \varphi(x_{j})$ for some $j \in \{2,3,\ldots,2N-1\}$, we can still iterate this argument using~\eqref{eq: VandermondeA} to conclude that~\eqref{eqn::DETCOV} holds for all M\"obius maps $\varphi$ of the upper half-plane. 
The asserted M\"obius covariance~\eqref{eqn::USTCOV} now follows by Proposition~\ref{prop::two_Fs_as_determinants}. 
\end{proof}

In essence, a M\"obius transformation amounts to changing the hyperelliptic curve~\eqref{eq: hyperelliptic_curve} within its (birational) equivalence class. 
The proof of Proposition~\ref{prop: full Mobius covariance F} shows that the determinant of the $a$-periods is covariant in the sense that (writing $\varphi(\boldsymbol{x}) = (\varphi(x_{1}),\ldots,\varphi(x_{2N}))$ with 
$\varphi(x_{1}) < \varphi(x_{2}) < \cdots < \varphi(x_{2N})$)
\begin{align*} 
\det A_{\beta}^{\circ} (\boldsymbol{x}) 
= \; & \prod_{i=1}^{2N} \varphi'(x_i)^{g/4} \times \det A_{\beta}^{\circ} (\varphi(\boldsymbol{x})) ,
\end{align*}
where $g=N-1$ is the genus of the hyperelliptic curve~\eqref{eq: hyperelliptic_curve}. 
It would be interesting to see whether such a covariance property also carries a more intrinsic geometric meaning.

\subsection{Partial differential equations}
\label{subsec::PDEs}

In this section, we give a short analytic proof for the PDE system~\eqref{eqn::USTPDE} for Theorem~\ref{thm::coulombgasintegral}. 
We give a more probabilistic 
proof in Corollary~\ref{cor::coulombgasintegralPDE}.
We denote the differential operators in~\eqref{eqn::USTPDE} by 
\begin{align*}
\mathcal{D}^{(j)} := 4\pdder{x_{j}} + \sum_{i \neq j} \Big( \frac{2}{x_{i}-x_{j}} \pder{x_{i}} 
+ \frac{1/4}{(x_{i}-x_{j})^{2}} \Big) .
\end{align*}

\begin{lemma} \label{lem: the BSA operator gives total derivative}
The integrand function $f$ defined in~\eqref{eq: integrand} satisfies the PDEs
\begin{align*}
\big(\mathcal{D}^{(j)} f \big)(\boldsymbol{x};\boldsymbol{u})
=\; & \sum_{r=1}^{N} \pder{u_{r}} \big( R(u_{r};\boldsymbol{x};\boldsymbol{\dot{u}}_r) \, f(\boldsymbol{x};\boldsymbol{u}) \big) , 
\qquad \textnormal{for all } j \in \{1,\ldots,2N\} ,
\end{align*}
where $\boldsymbol{\dot{u}}_r = (u_{1},\ldots,u_{r-1},u_{r+1},\ldots,u_{N})$
and $R$ is a rational function which is symmetric in its last $N-1$
variables, and whose only poles are where some of its arguments coincide.
\end{lemma}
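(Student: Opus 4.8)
The statement to prove is Lemma~\ref{lem: the BSA operator gives total derivative}: that the second-order BPZ operator $\mathcal{D}^{(j)}$ applied to the integrand $f$ of~\eqref{eq: integrand} yields a total derivative in the screening variables $u_1, \ldots, u_N$. The plan is to do this by a direct, if somewhat lengthy, differential-calculus computation, following the strategy explained in~\cite[Section~1]{KytolaPeltolaConformalCovBoundaryCorrelation} and in direct parallel with Lemma~\ref{lem: infinitesimal special conformal transformations} (whose proof structure is the template here). First I would write $f = f^{(0)}(\boldsymbol{x}) \cdot \prod_{r<s}(u_s - u_r) \cdot \prod_{i,r}(u_r - x_i)^{-1/2}$ and record the logarithmic derivative $\partial_{x_i} \log f$ and $\partial_{x_i}^2 \log f$ explicitly: the $x$-dependence sits in $f^{(0)}$ (giving terms $\sum_{i'\ne i}\frac{1/4}{x_i - x_{i'}}$) and in the factors $(u_r - x_i)^{-1/2}$ (giving terms $\frac12\sum_r \frac{1}{u_r - x_i}$). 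From this one computes $\mathcal{D}^{(j)} f / f$ as an explicit rational function of $\boldsymbol{x}$ and $\boldsymbol{u}$.

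The core of the argument is then to exhibit a rational function $g(u_r; \boldsymbol{x}; \boldsymbol{\dot u}_r)$, symmetric in its last $N-1$ arguments and with poles only on diagonals, such that $\mathcal{D}^{(j)} f = \sum_r \partial_{u_r}(g \, f)$. Expanding the right-hand side gives $\sum_r \left[ (\partial_{u_r} g) f + g \, (\partial_{u_r}\log f)\, f \right]$, so the task reduces to an identity between two explicit rational functions (after dividing by $f$). The standard choice, adapted from the Benoit--Saint-Aubin / Dotsenko--Fateev machinery, is for $g$ to be built from a denominator $\prod_{i}(u_r - x_i)$ so that the ``screening charge'' structure works out; one then verifies the identity by a Liouville-type argument, checking that both sides are rational functions with the same poles (in each variable $x_j$, $u_r$) and the same behavior at infinity, hence equal. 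Concretely, one fixes all variables but one, say $x_j$, treats both sides as rational functions of $x_j$, matches the principal parts at $x_j = x_i$ ($i \ne j$) and at $x_j = u_r$, and checks decay at $x_j \to \infty$; a symmetric check in each $u_r$ pins down $g$ uniquely. Since the analogous statement for Möbius covariance (Lemma~\ref{lem: infinitesimal special conformal transformations}) has already been imported from~\cite[Lemma~4.14]{KytolaPeltolaConformalCovBoundaryCorrelation}, and that paper's~\cite[Section~1]{KytolaPeltolaConformalCovBoundaryCorrelation} carries out exactly this type of computation, I would cite it and remark that although~\cite{KytolaPeltolaConformalCovBoundaryCorrelation} restricts to $\kappa \notin \QQ$ for unrelated convergence reasons, the purely algebraic identity here is valid for all $\kappa > 0$, in particular $\kappa = 8$.

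The main obstacle is purely computational bookkeeping: the operator $\mathcal{D}^{(j)}$ is second order, so $\mathcal{D}^{(j)} f$ involves $(\partial_{x_j}\log f)^2 + \partial_{x_j}^2 \log f$ plus the first-order and zeroth-order pieces, producing double sums over pairs of screening variables and over the marked points; guessing the correct $g$ so that all the cross terms $\frac{1}{(u_r - x_i)(u_r - x_{i'})}$, $\frac{1}{(u_r - x_i)(u_s - u_r)}$, etc., reorganize into $\partial_{u_r}(g f)$ requires care. I expect the cleanest route is not to guess $g$ blindly but to quote the known form from~\cite[Section~1 and Lemma~4.14]{KytolaPeltolaConformalCovBoundaryCorrelation} (where $g$ is essentially a fixed polynomial-over-$\prod_i(u_r - x_i)$ expression coming from the level-two null-vector equation), note that the specialization of the general Coulomb-gas BPZ computation to central charge $c = -2$, weight $h_{1,2} = -1/8$, and $\ell = N$ screenings is exactly~\eqref{eqn::def_fnod}, and then verify the identity by the Liouville argument above. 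Thus the proof is short in presentation — ``this is the standard Dotsenko--Fateev calculation, valid for all $\kappa$, and we give the direct version in Corollary~\ref{cor::coulombgasintegralPDE}'' — with the genuine verification deferred to the more explicit treatment promised there.
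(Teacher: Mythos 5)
Your proposal is correct and takes essentially the same approach as the paper, which also proves the lemma simply by citing~\cite[Corollary~4.11]{KytolaPeltolaConformalCovBoundaryCorrelation} and observing that the restriction there to $\kappa\notin\QQ$ is immaterial for this purely algebraic identity. The only small things worth noting are that the precise reference is Corollary~4.11 of that paper rather than Lemma~4.14 (the latter corresponds to Lemma~\ref{lem: infinitesimal special conformal transformations} here), and that the paper singles out the fact $\mathcal{D}^{(j)}f^{(0)}=0$ as the structural simplification that makes the explicit computation tractable, which your outline does not mention.
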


\begin{proof}
This was proven in~\cite[Corollary~4.11]{KytolaPeltolaConformalCovBoundaryCorrelation}\footnote{In the notation of~\cite{KytolaPeltolaConformalCovBoundaryCorrelation}, 
$n=2N$, and $d_i = 2$ for all~$i$.} 
under the assumption $\kappa \notin \QQ$, but the same proof works for all $\kappa > 0$; in particular for $\kappa = 8$.
The proof is a relatively straightforward argument using
explicit analysis of $\mathcal{D}^{(j)}$ and $f$ together with the fact that the function $f^{(0)}$ defined in~\eqref{eqn::def_fnod} 
is a solution to the PDE system
$\mathcal{D}^{(j)} f^{(0)} = 0 $ for $j \in \{1,\ldots,2N\}$,
which can be shown by an explicit calculation. 
\end{proof}

To conclude from Lemma~\ref{lem: the BSA operator gives total derivative} 
that $\LF_\beta$ satisfy the PDEs~\eqref{eqn::USTPDE}, 
we use integration by parts.

\begin{proposition} \label{prop: PDEs F}
For each $\beta \in \LP_N$, the function $\LF_{\beta}$ satisfies 
the PDE system~\eqref{eqn::USTPDE}. 
\end{proposition}

\begin{proof}
Fix $j \in \{1,\ldots,2N\}$. 
By dominated convergence, we can take the differential operator $\mathcal{D}^{(j)}$ inside the integral in 
$\LF_{\beta}^\circ$, and thus let it act directly to the integrand $f_\beta$. 
Lemma~\ref{lem: the BSA operator gives total derivative}
then gives
\begin{align} \label{eq: integral sum}
\big( \mathcal{D}^{(j)} \LF_{\beta}^\circ \big)(\boldsymbol{x})
 = 
\sum_{r=1}^{N} \ointclockwise_{\Gamma_\beta} 
\pder{u_{r}} \big( R(u_{r};\boldsymbol{x};\boldsymbol{\dot{u}}_r) \; f_\beta (\boldsymbol{x};\boldsymbol{u}) \big) 
 \; \ud u_1 \cdots \ud u_N ,
\end{align}
where $\Gamma_\beta := \acycle_1^\beta \times \cdots \times \acycle_N^\beta$. 
Now, for each fixed $r$, we perform integration by parts in the $r$:th 
term of~\eqref{eq: integral sum}.
As the other integration variables are bounded away from $u_{r}$,
and the values of the integrand at the beginning and end points of the loop $\acycle_r^\beta \ni u_{r}$ coincide, the boundary terms cancel out.
We conclude that each term in~\eqref{eq: integral sum} actually equals zero, 
which gives the asserted PDE: $\mathcal{D}^{(j)} \LF_{\beta} = 2^{-N} \mathcal{D}^{(j)} \LF_{\beta}^\circ = 0$. 
\end{proof}

\subsection{Asymptotics}
\label{subsec: asy}
{

The goal of this section is to prove that $\LF_{\beta}$ 
satisfy the recursive asymptotics~(\ref{eqn::USTASY1},~\ref{eqn::USTASY2})  motivated by CFT fusion rules --- the result is stated in Proposition~\ref{prop: Asymptotics F}, which proves part of Theorem~\ref{thm::coulombgasintegral}.

\begin{proposition} \label{prop: Asymptotics F}
The collection $\{\LF_{\beta} \colon \beta \in \LP_N , \; N \in \bZnn \}$ of functions  
satisfies $\LF_{\emptyset} \equiv 1$ and 
the recursive asymptotics~\textnormal{(}\ref{eqn::USTASY1},~\ref{eqn::USTASY2}\textnormal{)}.
\end{proposition}

\begin{proof}
The normalization property $\LF_{\emptyset} \equiv 1$ 
is understood as an empty product of integrals in the definition~\eqref{eq: Def of F beta}. 
For the asymptotics, note that after conjugating by a suitable M\"obius transformation, 
by Proposition~\ref{prop: full Mobius covariance F} 
it suffices to prove~(\ref{eqn::USTASY1},~\ref{eqn::USTASY2}) for $j=1$. 
Furthermore, by translation invariance (e.g., Proposition~\ref{prop: full Mobius covariance F}), 
we may assume without loss of generality that $\xi = 0$ (this will simplify some computations).

Now, fix $\beta \in \LP_N$ with link endpoints ordered as in~\eqref{eq: link pattern ordering}. 
It thus remains to consider the limit of $\LF_{\beta}$
as $x_1, x_2 \to 0$ for $0 < x_3 < x_4 < \cdots < x_{2N}$ in the following two cases:
\begin{enumerate} 
\item  \label{item::case1} $\{a_1,b_1\} = \{1,2\} \in \beta$; or 

\smallbreak

\item  \label{item::case2} $\{1,2\} \notin \beta$, in which case there are some indices $b_1 \in \{4,6,\ldots,2N\}$ and
$b_2 \in \{3,5,\ldots,2N-1\}$ such that 
$\{a_1,b_1\} = \{1,b_1\}$ and $\{a_2,b_2\} = \{2,b_2\}$. 
\end{enumerate}
We write $\boldsymbol{x} = (x_1, \ldots, x_{2N}) \in \chamber_{2N}$
and $\boldsymbol{\ddot{x}} = (x_3, x_4, \ldots, x_{2N})$. 
By the identities in  
Proposition~\ref{prop::two_Fs_as_determinants}, we have 
$\LF_\beta(\boldsymbol{x}) 
= 2^{-N} \LF_\beta^\circ(\boldsymbol{x}) 
= 2^{1-N}\pi \ii \, \zeta^\circ_{\beta} \, f^{(0)} (\boldsymbol{x}) \, \det A_{\beta}^{\circ} (\boldsymbol{x})$, 
where
\begin{itemize}[leftmargin=2em]
\item $\zeta^\circ_{\beta} = \ii^{\sum_{s=1}^N (2N-a_s+2)}$ is a phase factor satisfying 
\begin{align*}
\zeta^\circ_{\beta} = -\ii \, \zeta^\circ_{\beta / \{1,2\}}
\qquad \textnormal{and} \qquad 
\zeta^\circ_{\beta} = \ii^{b_2+1} \, \zeta^\circ_{\wp_1(\beta)/\{1,2\}} ;
\end{align*}

\smallbreak

\item $f^{(0)}$ is defined in Equation~\eqref{eqn::def_fnod} and satisfies the simple asymptotics
\begin{align*}
\lim_{x_1 , x_2 \to 0} \frac{f^{(0)}(\boldsymbol{x})}{(x_{2}-x_{1})^{1/4}} 
= \; & f^{(0)}(\boldsymbol{\ddot{x}})\times \prod_{3\le i\le 2N} x_i^{1/2} ;
\end{align*}

\smallbreak

\item and $\det A_{\beta}^{\circ}$ is the determinant of the matrix of $a$-periods defined in~\eqref{eq::A_matrices}, 
whose asymptotics we derive in Lemmas~\ref{lem: Asymptotics1 A} and~\ref{lem: Asymptotics2 A} below (respectively in Cases~\ref{item::case1} and~\ref{item::case2}).
\end{itemize}
Combining these inputs, we obtain the asserted limits~(\ref{eqn::USTASY1},~\ref{eqn::USTASY2}).
\end{proof}

\begin{lemma} \label{lem: Asymptotics1 A}
Suppose $\{a_1,b_1\} = \{1,2\} \in \beta$, and write $\boldsymbol{x} = (x_1, \ldots, x_{2N}) \in \chamber_{2N}$
and $\boldsymbol{\ddot{x}} = (x_3, x_4, \ldots, x_{2N})$, where $0 < x_3 < x_4 < \cdots < x_{2N}$.
Then, we have
\begin{align} \label{eq: Asymptotics1 P}
\lim_{x_1 , x_2 \to 0} \det A_{\beta}^{\circ} (\boldsymbol{x})
= \; & \frac{ 2 \pi \ii }{\prod_{i=3}^{2N}x_{i}^{1/2}} \; \det A_{\beta/\{1,2\}}^{\circ} (\boldsymbol{\ddot{x}}) ,
\end{align}
where $\beta/\{1,2\} \in \LP_{N-1}$ denotes the link pattern obtained from $\beta$ by removing the link $\{1,2\}$ and relabeling the remaining indices by $1, 2, \ldots, 2N-2$.
\end{lemma}

\begin{proof}
We expand the determinant according to the cofactors along the first column:
\begin{align} \label{eq: det_expansion1}
\det A_{\beta}^{\circ} 
= \; & \sum_{r=1}^{N-1} (-1)^{1+r} \, ( \det (\hat{A}_{\beta}^{\circ})_{r,1} ) \,
\ointclockwise_{\acycle^\beta_{r+1}} \frac{\ud u}{\wfunc(u; \boldsymbol{x})} ,
\end{align}
where 
$\det (\hat{A}_{\beta}^{\circ})_{r,1}$ is the minor obtained from $A_{\beta}^{\circ}$ by removing the first column and $r$:th row.

On the one hand, because the integration contours $\acycle_2^\beta, \acycle_3^\beta, \ldots, \acycle_{N}^\beta$ 
remain bounded away from each other and from the points $x_1$ and $x_2$, 
and their homotopy types do not change upon taking the limit $x_1 , x_2 \to 0$, 
by dominated convergence we see that, 
the matrix entries of $(\hat{A}_{\beta}^{\circ})_{r,1}$, with $r \in \{1,2,\ldots,N-1\}$ and $s \in \{2,3,\ldots,N-1\}$, have finite limits: 
\begin{align*}
\lim_{x_1 , x_2 \to 0} \ointclockwise_{\acycle^\beta_{r+1}} \omega_{s-1} 
= \; & \lim_{x_1 , x_2 \to 0} \ointclockwise_{\acycle^\beta_{r+1}} \frac{u^{s-1} \ud u}{\wfunc(u; \boldsymbol{x})} 
\; = \; 
\ointclockwise_{\acycle^\beta_{r+1}} \frac{u^{s-2} \ud u}{\wfunc(u; \boldsymbol{\ddot{x}})} 
\; = \; \ointclockwise_{\hat{\acycle}^\beta_{r}} \hat{\omega}_{s-2} ,
\end{align*}
where $\hat{\omega}_{s-2}$ are the holomorphic one-forms 
and $\hat{\acycle}^\beta_{r}$ the $a$-cycles associated to $\Sigma_{x_3,x_4,\ldots,x_{2N}}$.

On the other hand, the matrix entry 
$(A_{\beta}^{\circ})_{r,1} = \ointclockwise_{\acycle^\beta_{r+1}} \omega_0 = \ointclockwise_{\acycle^\beta_{r+1}} \frac{\ud u}{\wfunc(u; \boldsymbol{x})}$ has a similar limit: 
\begin{align*}
\lim_{x_1 , x_2 \to 0} \ointclockwise_{\acycle^\beta_{r+1}} \frac{\ud u}{\wfunc(u; \boldsymbol{x})}
\; = \; \ointclockwise_{\acycle^\beta_{r+1}} \frac{\ud u}{u \, \wfunc(u; \boldsymbol{\ddot{x}})}
\; = \; \ointclockwise_{\hat{\acycle}^\beta_{r}} \tilde{\omega} ,
\end{align*}
where $\smash{\tilde{\omega} = \frac{\ud u}{u \, \wfunc(u; \boldsymbol{\ddot{x}})}}$ is 
a meromorphic one-form on $\Sigma_{x_3,x_4,\ldots,x_{2N}}$ 
with simple poles at the two copies $0^\pm$ of the origin with residues
$\smash{\pm \frac{1}{\wfunc(0; \boldsymbol{\ddot{x}})}}$. 
In conclusion, we have
\begin{align} \label{eq: Asymptotics1 A pre}
\lim_{x_1 , x_2 \to 0} \det A_{\beta}^{\circ} (\boldsymbol{x}) 
\quad = \quad
\begin{vmatrix}
\underset{\hat{\acycle}^\beta_{1}}{\ointclockwise} \tilde{\omega} \quad
& \underset{\hat{\acycle}^\beta_{1}}{\ointclockwise} \hat{\omega}_{0} \quad
& \underset{\hat{\acycle}^\beta_{1}}{\ointclockwise} \hat{\omega}_{1} \quad
& \cdots & 
\underset{\hat{\acycle}^\beta_{1}}{\ointclockwise} \hat{\omega}_{N-3} \\[4ex]
\underset{\hat{\acycle}^\beta_{2}}{\ointclockwise} \tilde{\omega} \quad
& \underset{\hat{\acycle}^\beta_{2}}{\ointclockwise} \hat{\omega}_{0} \quad
& \underset{\hat{\acycle}^\beta_{2}}{\ointclockwise} \hat{\omega}_{1} \quad
& \cdots & \underset{\hat{\acycle}^\beta_{2}}{\ointclockwise} \hat{\omega}_{N-3}  \\[2ex]
\vdots & \cdots & \ddots & \vdots \\[2ex]
\underset{\hat{\acycle}^\beta_{N-1}}{\ointclockwise} \tilde{\omega}\quad
& \underset{\hat{\acycle}^\beta_{N-1}}{\ointclockwise} \hat{\omega}_{0} \quad
& \underset{\hat{\acycle}^\beta_{N-1}}{\ointclockwise} \hat{\omega}_{1} \quad
& \cdots & \underset{\hat{\acycle}^\beta_{N-1}}{\ointclockwise} \hat{\omega}_{N-3} 
\end{vmatrix} .
\end{align}
After adding all of the other rows to the first row in~\eqref{eq: Asymptotics1 A pre}, recalling the relation 
\begin{align*}
\hat{\acycle}^\beta_{1} = - ( \hat{\acycle}^\beta_{2} + \hat{\acycle}^\beta_{3} + \cdots + \hat{\acycle}^\beta_{N-1} ) \quad 
\textnormal{in } H_1(\Sigma_{x_3,x_4,\ldots,x_{2N}}, \Z) ,
\end{align*}
and noting that 
\begin{align} \label{eq: res_at_zero}
\sum_{r=1}^{N-1} \ointclockwise_{\hat{\acycle}^\beta_{r}} \tilde{\omega}
\; = \; 2 \pi \ii \; \underset{0^+}{\mathrm{Res}} \, \tilde{\omega}
\; = \; \frac{2 \pi \ii}{\wfunc(0; \boldsymbol{\ddot{x}})} 
\; = \; \frac{2 \pi \ii}{\prod_{i=3}^{2N}x_{i}^{1/2}} ,
\end{align}
we see that the right-hand side of~\eqref{eq: Asymptotics1 A pre} equals the determinant of
the matrix whose first row comprises~\eqref{eq: res_at_zero} and $N-2$ zeros, 
while its other (unchanged) rows coincide with those of the right-hand side of~\eqref{eq: Asymptotics1 A pre}.
In particular, since its principal submatrix obtained by removing the first row and the first column is $A_{\beta/\{1,2\}}^{\circ} (\boldsymbol{\ddot{x}})$, we conclude that~\eqref{eq: Asymptotics1 P} indeed holds. 
\end{proof}

\begin{lemma} \label{lem: Asymptotics2 A}
Suppose that $\{1,2\} \notin \beta$, 
and write $\{a_1,b_1\} = \{1,b_1\}$ and $\{a_2,b_2\} = \{2,b_2\}$ with indices $b_1 \in \{4,6,\ldots,2N\}$ and $b_2 \in \{3,5,\ldots,2N-1\}$, and
$\boldsymbol{x} = (x_1, \ldots, x_{2N}) \in \chamber_{2N}$
and $\boldsymbol{\ddot{x}} = (x_3, x_4, \ldots, x_{2N})$, where $0 < x_3 < x_4 < \cdots < x_{2N}$.
Then, we have
\begin{align} \label{eq: Asymptotics2 A}
\lim_{x_1 , x_2 \to 0} \frac{\det A_{\beta}^{\circ} (\boldsymbol{x})}{|\log(x_2-x_1)| } 
= \; & \frac{2 (-1)^{(b_2+1)/2}}{\prod_{i=3}^{2N}x_{i}^{1/2}} \; \det A_{\wp_1(\beta)/\{1,2\}}^{\circ} (\boldsymbol{\ddot{x}}) ,
\end{align}
where $\wp_1$ is the tying operation~\eqref{eq::tying_operation}.
\end{lemma}

\begin{proof}
We expand the determinant as in Equation~\eqref{eq: det_expansion1}.
On the one hand, the matrix elements of $(\hat{A}_{\beta}^{\circ})_{r,1}$ all remain finite in the limit $x_1 , x_2 \to 0$.
On the other hand, the matrix entry $(A_{\beta}^{\circ})_{r,1}$ also has a finite limit unless $r=1$. 
For the matrix entry $(A_{\beta}^{\circ})_{1,1}$, we can evaluate its limit 
by performing the change of variables $\smash{v = \frac{u - x_2}{x_{b_2} - x_2}}$:
\begin{align} \label{eq: limit of first matrix entry}
\begin{split}
\; & \lim_{x_1 , x_2 \to 0} \frac{1}{|\log(x_2-x_1)| } \; \ointclockwise_{\acycle^\beta_{2}} \omega_0 \\
= \; & \lim_{x_1 , x_2 \to 0} \frac{1}{|\log(x_2-x_1)| } \; 
\ointclockwise_{0}^{1} \frac{\ud v}{\sqrt{v \big(v+\frac{x_2-x_1}{x_{b_2}-x_2}\big)}}
\; \frac{1}{\wfunc((x_{b_2} - x_2) v + x_2; \boldsymbol{\ddot{x}})} .
\end{split}
\end{align}
For every $\epsilon>0$ and $c_1>0$, we can choose small $c_2 > 0$ such that $|\wfunc(u; \boldsymbol{\ddot{x}})| \le M$ and $| \wfunc(u; \boldsymbol{\ddot{x}}) - \wfunc(0; \boldsymbol{\ddot{x}}) | \le \epsilon$ for all $u \in [- 2 \, c_2 \, x_{b_2} , c_2 \, x_{b_2}]$,  
and $c_1 \frac{x_2-x_1}{x_{b_2}-x_2} \le c_2$. 
Then, 
\begin{align*}
\bigg| 
\landupint_0^{c_1 \frac{x_2-x_1}{x_{b_2}-x_2}} \ud v \,
\frac{\ud v}{\sqrt{v \big(v+\frac{x_2-x_1}{x_{b_2}-x_2}\big)}}
\; \frac{1}{\wfunc((x_{b_2} - x_2) v + x_2; \boldsymbol{\ddot{x}})} 
\bigg| 
\; \le \; M \landupint_0^{c_1} \frac{\ud v}{ \sqrt{v(v+1)} } ,
\end{align*}
which is bounded for any finite $c_1$; 
\begin{align*}
\bigg| 
\landupint_{c_2}^{1}  \ud v \,
\frac{\ud v}{\sqrt{v \big(v+\frac{x_2-x_1}{x_{b_2}-x_2}\big)}}
\; \frac{1}{\wfunc((x_{b_2} - x_2) v + x_2; \boldsymbol{\ddot{x}})} 
\bigg| 
\; \le \; \frac{1}{c_2} 
\landupint_{0}^{1} \ud v \, \frac{1}{|\wfunc((x_{b_2} - x_2) v + x_2; \boldsymbol{\ddot{x}})|}  ,
\end{align*}
which is bounded for any non-zero $c_2$;  
and 
using the bound $\sqrt{\frac{1}{v(v+1)}} \leq \frac{1}{v}$, we also find 
\begin{align*}
\bigg|
\landupint_{c_1 \frac{x_2-x_1}{x_{b_2}-x_2}}^{c_2} 
\big( \wfunc(u; \boldsymbol{\ddot{x}}) - \wfunc(0; \boldsymbol{\ddot{x}}) \big) \,
\frac{\ud v}{\sqrt{v \big(v+\frac{x_2-x_1}{x_{b_2}-x_2}\big)}}
\bigg|
\; \le \; & \epsilon 
\log \Big( \frac{c_2 \, (x_{b_2}-x_2)}{c_1 \, (x_2-x_1) } \Big) ,
\end{align*}
which blows up like $|\log(x_2-x_1)|$ as $x_1, x_2 \to 0$, 
while $c_1$, $c_2$, and $\epsilon$ are kept fixed.  
Noting that $v \mapsto \smash{\sqrt{\tfrac{v}{v+\lambda}}}$ is increasing on $[0,\infty)$ for any constant $\lambda > 0$, we find (with $\lambda = c_1 \frac{x_2-x_1}{x_{b_2}-x_2}$)
\begin{align*}
\sqrt{\frac{c_1}{c_1+1}} \,
\log \Big( \frac{c_2 \, (x_{b_2}-x_2)}{c_1 \, (x_2-x_1) } \Big) 
\; \le \; 
\landupint_{c_1 \frac{x_2-x_1}{x_{b_2}-x_2}}^{c_2} 
\frac{\ud v}{\sqrt{v \big(v+\frac{x_2-x_1}{x_{b_2}-x_2}\big)}}
\; \le \; 
\log \Big( \frac{c_2 \, (x_{b_2}-x_2)}{c_1 \, (x_2-x_1) } \Big) ,
\end{align*}
whose upper bound is of order $|\log(x_2-x_1)|$ and lower bound 
of order $\smash{\sqrt{\frac{c_1}{c_1+1}}} \, |\log(x_2-x_1)|$ as $x_1, x_2 \to 0$, 
while $c_1$ is kept fixed.
Combining the above four estimates and taking the limits
$x_1, x_2 \to 0$; then $c_2 \to 0$; then $\epsilon \to 0$; and then $c_1 \to \infty$ (in this order), we obtain\footnote{Here, we can decompose the loop integral in~\eqref{eq: limit of first matrix entry} as a linear combination of line integrals as in Appendix~\ref{app::matrices}, which gives the factor ``$2$''. 
The integral concentrates near the starting point of the contour in the limit.} 
\begin{align*}
\textnormal{\eqref{eq: limit of first matrix entry}}
\; = \; \frac{2}{\wfunc(0; \boldsymbol{\ddot{x}})} 
\; = \; \frac{2}{\prod_{i=3}^{2N}x_{i}^{1/2}} ,
\end{align*}
In conclusion, the only term in the determinant~\eqref{eq: det_expansion1} divided by $|\log(x_2-x_1)|$ which survives in the limit $x_1, x_2 \to 0$ is the one with $r=1$, and this limit equals
\begin{align*} 
\lim_{x_1 , x_2 \to 0} \frac{\det A_{\beta}^{\circ} (\boldsymbol{x})}{|\log(x_2-x_1)| } 
= \; & \lim_{x_1 , x_2 \to 0} ( \det (\hat{A}_{\beta}^{\circ})_{1,1} ) \, 
\frac{1}{|\log(x_2-x_1)| }  \ointclockwise_{\acycle^\beta_{2}} \omega_0 \\
= \; & \frac{2  (-1)^{(b_2+1)/2}}{\prod_{i=3}^{2N}x_{i}^{1/2}} \; \det A_{\wp_1(\beta)/\{1,2\}}^{\circ} (\boldsymbol{\ddot{x}}) ,
\end{align*}
where we evaluated the limit of $\det (\hat{A}_{\beta}^{\circ})_{1,1}$ similarly as in the proof of Lemma~\ref{lem: Asymptotics1 A}, 
observing that the result is the period matrix for $\wp_1(\beta)/\{1,2\}$ with the $\smash{(\tfrac{b_2-1}{2})}$:th $a$-cycle removed, 
yielding\footnote{Recall that in the $a$-period matrices, we omit, by convention, the first $a$-cycle.} the multiplicative sign factor 
$(-1)^{1+(b_2-1)/2} = (-1)^{(b_2+1)/2}$.
\end{proof}
}

\subsection{Positivity}
\label{subsec::positivityproof}

{

In this section, we prove that $\LF_{\beta}$ can be chosen to be simultaneously positive, thus verifying property~(POS) in Theorem~\ref{thm::coulombgasintegral}.

\smallbreak

We first record a very useful general property of Coulomb gas integrals of type~\eqref{eq: ansatz8} 
with $\ell \in \{2,3,\ldots, N\}$ and $\Gamma = \acycle_1 \times \acycle_2 \times \cdots \times \acycle_\ell$, where $\acycle_1, \acycle_2, \ldots, \acycle_\ell$ are clockwise\footnote{One could also orient them counterclockwise --- what is important is that all loops have the same orientation.} oriented simple mutually non-intersecting loops on $\C \setminus \{\gamma_1^\beta, \ldots, \gamma_{2N}^\beta\}$. 
Namely, for any fixed $r \in \{1,2,\ldots,\ell\}$, we can replace the integration 
along $\acycle_r$ by an integration along another simple loop $\Gloop_r$ obtained from $\acycle_r$ by pulling it over some of the other loops in $\Gamma$ (as specified in Lemma~\ref{lem::circle_integral_general} below).
This property hold regardless of the branch choice for the integrand $f$ defined in~\eqref{eq: integrand}. 
The setup is illustrated in Figure~\ref{fig: nesting deformation}. 

\begin{lemma} \label{lem::circle_integral_general}
Fix $\ell \in \{2,3,\ldots,N\}$ and $r \in \{1,2,\ldots,\ell\}$. 
Let $\Gloop_r$ be a clockwise oriented simple loop on $\C \setminus \{\gamma_1^\beta, \ldots, \gamma_{2N}^\beta\}$ obtained from $\acycle_r$ by pulling $\acycle_r$ over some of the other loops in $\{\acycle_1, \acycle_2, \ldots, \acycle_\ell\} \setminus \{\acycle_r\}$\textnormal{:}
\begin{align} \label{eq::Gloop_in_homology}
\Gloop_r = \acycle_r + \sum_{s \in I_r} \acycle_s \quad 
\textnormal{in } H_1(\Sigma, \Z) ,
\end{align}
where $I_r \subset \{1,2,\ldots, \ell\} \setminus \{r\}$. 
Then, writing $\boldsymbol{u} = (u_1, \ldots, u_\ell)$, we have
\begin{align} \label{eq::integra_deformation}
\begin{split}
\; & 
\ointclockwise_{\acycle_1}  \ud u_1 \ointclockwise_{\acycle_2}  \ud u_2 
\cdots \ointclockwise_{\acycle_\ell} \ud u_\ell 
\; f (\boldsymbol{x};\boldsymbol{u}) \\
= \; & \ointclockwise_{\acycle_1} \ud u_1 
\cdots \ointclockwise_{\acycle_{r-1}} \ud u_{r-1} 
\ointclockwise_{\Gloop_r} \ud u_r
\ointclockwise_{\acycle_{r+1}} \ud u_{r+1} 
\cdots \ointclockwise_{\acycle_{\ell}} \ud u_{\ell} 
\; f (\boldsymbol{x};\boldsymbol{u}) .
\end{split}
\end{align}
\end{lemma}

Note that the replacement in Lemma~\ref{lem::circle_integral_general} is only valid when we integrate $f (\boldsymbol{x}; u_1, \ldots, u_\ell)$
along all $\ell$ loops $\acycle_1, \acycle_2, \ldots, \acycle_\ell$, 
in which case we can use antisymmetry to get cancellations.

\begin{proof}
After deforming and decomposing the loop $\Gloop_r$ into the linear combination~\eqref{eq::Gloop_in_homology},  
we see by antisymmetry of the integrand and Fubini's theorem 
that only $\acycle_r$ can give a non-zero contribution on the right-hand side of~\eqref{eq::integra_deformation}: 
indeed, for any $s \neq r$, the double-integral of~\eqref{eq: integrand} along $\acycle_s$ vanishes (here, we use the assumption that $\ell \geq 2$), 
\begin{align*}
\ointclockwise_{\acycle_s} \ud u_s \ointclockwise_{\acycle_s}  \ud u_r
\; f (\boldsymbol{x};u_1,\ldots,u_\ell)
= 0 ,
\end{align*}
by antisymmetry of the integrand~\eqref{eq: integrand} with respect to the exchange 
$u_s \leftrightarrow u_r$.
\end{proof}

\begin{figure}[ht!]
\includegraphics[width=0.75\textwidth]{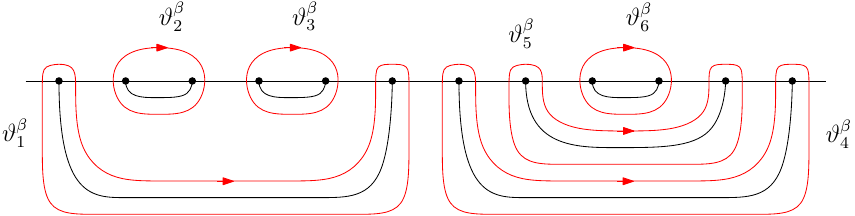}
\[\Downarrow\]
\includegraphics[width=0.75\textwidth]{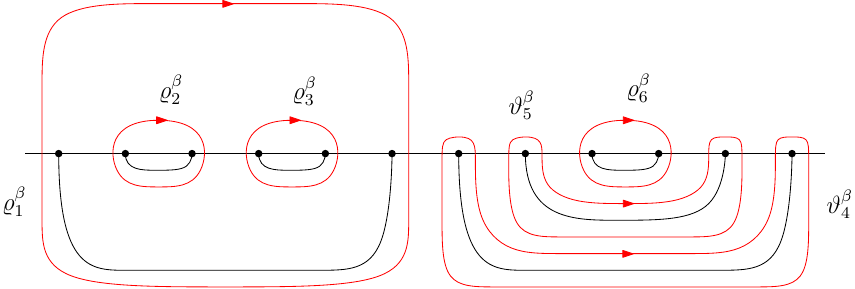}
\[\Downarrow\]
\includegraphics[width=0.75\textwidth]{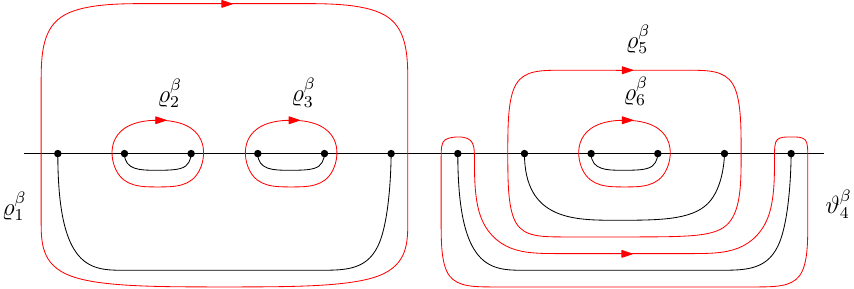}
\[\Downarrow\]
\includegraphics[width=0.75\textwidth]{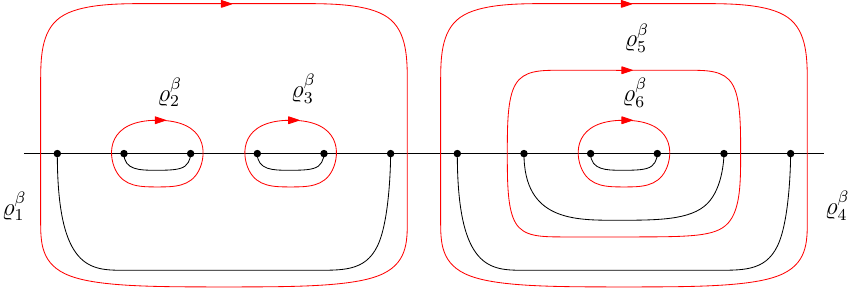}
\caption{\label{fig: nesting deformation}Illustration of a repeated application of Lemma~\ref{lem::circle_integral_general} in the proof of Corollary~\ref{cor: F_real}: we iteratively transform the integration contours $\smash{\Gamma_\beta := \acycle_1^\beta \times \cdots \times \acycle_N^\beta}$ (top line) into the integration contours $\smash{\GGloop_\beta := \Gloop_1^\beta \times \cdots \times \Gloop_N^\beta}$ (bottom line), 
which are symmetric with respect to the real axis.
In this example, we have $\beta=\{\{1,6\}, \{2,3\}, \{4,5\}, \{7,12\}, \{8,11\}, \{9,10\}\}$.
}
\end{figure}

\begin{corollary} \label{cor: F_real}
For each $\beta \in \LP_N$, 
the functions $\LF_\beta, \LF_\beta^\circ \colon \chamber_{2N} \to \R$ are real-valued.
\end{corollary}

\begin{proof}
A repeated application of Lemma~\ref{lem::circle_integral_general} shows that 
the integration contours 
$\Gamma_\beta := \acycle_1^\beta \times \cdots \times \acycle_N^\beta$ in 
$\LF_\beta^\circ (\boldsymbol{x}) 
:= \ointclockwise_{\Gamma_\beta} f_\beta(\boldsymbol{x};\boldsymbol{u}) \; \ud \boldsymbol{u}$  
can be deformed into ones that are symmetric with respect to the real axis, which we write as 
$\smash{\GGloop_\beta := \Gloop_1^\beta \times \cdots \times \Gloop_N^\beta}$ (see also Figure~\ref{fig: nesting deformation}). 
Then, after making the change of variables by complex conjugation on the integrations along each $\Gloop_r^\beta \cap \overline{\HH}^*$, we see that
\begin{align*}
\LF_\beta^\circ (\boldsymbol{x}) 
= \; & \ointclockwise_{\Gloop^\beta_1}  \ud u_1 \ointclockwise_{\Gloop^\beta_2}  \ud u_2 \cdots \ointclockwise_{\Gloop^\beta_N} \ud u_N 
\; f_\beta(\boldsymbol{x};\boldsymbol{u}) \\
= \; & 2^N \ointclockwise_{\Gloop^\beta_1 \cap \overline{\HH}}  \ud u_1 \ointclockwise_{\Gloop^\beta_2 \cap \overline{\HH}} \ud u_2 \cdots \ointclockwise_{\Gloop^\beta_N \cap \overline{\HH}} \ud u_N 
\; \Re f_\beta(\boldsymbol{x};\boldsymbol{u}) 
&& \textnormal{[by Lem.~\ref{lem::circle_integral_general}]}
\\
= \; & 2^N \LF_\beta (\boldsymbol{x}) ,
&& \textnormal{[by Prop.~\ref{prop::two_Fs_as_determinants}]}
\end{align*}
which shows that $\LF_\beta, \LF_\beta^\circ \colon \chamber_{2N} \to \R$ are real-valued.
\end{proof}

\begin{proposition} \label{prop: positivity}
For each $\beta\in\LP_N$, we have $\LF_{\beta}(\boldsymbol{x})>0$, for all $\boldsymbol{x}\in\chamber_{2N}$. 
\end{proposition}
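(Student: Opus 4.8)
The plan is to prove positivity by induction on $N$, using the expansion~\eqref{eq: F_beta_R_permutations} of $\LF_\beta$ as a linear combination of the manifestly nonnegative real integrals $\SimplexInt_{k_1,\dots,k_N}$. The base case $N=1$ is Lemma~\ref{lem: N=1}, which gives $\LF_{\vcenter{\hbox{\includegraphics[scale=0.2]{figures/link-0.pdf}}}}(x_1,x_2) = \pi(x_2-x_1)^{1/4} > 0$. For the inductive step, the key difficulty is that the coefficients $c_\beta(\boldsymbol{k})$ are complex (they carry a phase $\ii^{\sum(k_r-a_r)}$ together with an alternating sum over $\mathfrak{S}_N$), so positivity of $\LF_\beta$ is far from term-by-term obvious. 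The guiding idea should be to exploit the antisymmetry of the integrand under exchange of integration variables — the "fermionic" feature stressed repeatedly in the introduction — to rewrite the relevant combination of $\SimplexInt$'s as a single integral of a squared (or Gram-determinant) quantity, which is automatically nonnegative, and then argue strict positivity separately.

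\textbf{Key steps.} First I would fix $\beta\in\LP_N$ and, using Proposition~\ref{prop::general_integral_consistent_loop} (or directly Corollary~\ref{cor: F_beta_R_permutations}), reduce to analyzing the multiple real integral
\begin{align*}
\LF_\beta(\boldsymbol{x}) \;=\; \text{(positive constant)} \times \int \prod_{1\le r<s\le N}(u_s-u_r)\;\prod_{r=1}^N \frac{\ud u_r}{\prod_{k=1}^{2N}|u_r-x_k|^{1/2}},
\end{align*}
where each $u_r$ ranges over $(x_{a_r},x_{b_r})$ with the Coulomb-gas branch. The Vandermonde factor $\prod_{r<s}(u_s-u_r)$ is the crucial ingredient: antisymmetrizing the integrand over the $N!$ orderings of the $u_r$'s, and recalling that $\SimplexInt_{k_1,\dots,k_N}$ vanishes unless the $k_r$ are distinct, I expect the sum in~\eqref{eq: F_beta_R_permutations} to collapse to an alternating-sum structure whose value is a determinant. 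Concretely, one writes the antisymmetrized integral over the region where $u_1<\cdots<u_N$ as $\det\big(\int_{I_r} w_r(u)\, u^{s-1}\,\ud u\big)$ or, after a suitable symmetrization, as $\int_{u_1<\cdots<u_N}\det(\cdots)\ge 0$; the precise bookkeeping of which intervals $I_r$ appear is exactly what the meander/link-pattern combinatorics controls. I would then verify that this determinant (equivalently, the Gram-type quantity) is strictly positive by checking that the relevant functions are linearly independent on the integration domain — this is where one invokes the invertibility results (Lemma~\ref{lem: Acirc_matrix_invertible}, Corollary~\ref{cor: A_matrix_invertible}), since those are precisely statements that certain period matrices of the differentials $\omega_r$ are nondegenerate.

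\textbf{Main obstacle.} The hardest part will be the sign/combinatorial bookkeeping: showing that the complex phases $\ii^{\sum(k_r-a_r)}$ and the permutation signs $(-1)^{\#\mathrm{inv}(\sigma)}$ conspire, for an arbitrary link pattern $\beta$ (with arbitrarily nested links), to produce a real, nonnegative, determinantal expression rather than a combination that could have either sign. For a nested link pattern one cannot simply factor the integral as a product over the links, so an induction that peels off an outermost or innermost simple link, combined with a careful tracking of how the branch cuts reorganize, seems necessary; I anticipate this is where most of the work lies, and I would structure it so that removing a simple link $\{j,j+1\}\in\beta$ reduces $\LF_\beta$ (up to the positive factor $\pi(x_{j+1}-x_j)^{1/4}$, via Lemma~\ref{lem: neighbor asy} read backwards, or directly) to $\LF_{\beta/\{j,j+1\}}$, closing the induction once the determinantal positivity has been established at each stage. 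Strict positivity (as opposed to nonnegativity) then follows because the integrand is positive on an open subset of the integration domain, so the nonnegative integral cannot vanish.
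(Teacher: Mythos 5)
Your proposal correctly identifies both the right expansion, Corollary~\ref{cor: F_beta_R_permutations}, and the right difficulty: the coefficients $c_\beta(\boldsymbol{k})$ carry complex phases and an alternating sum over $\mathfrak{S}_N$, so positivity is not term-by-term obvious. But the mechanism you propose to resolve it does not close the gap.

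The Gram-determinant idea would require the integrand to contain the Vandermonde \emph{squared}, or more generally two determinantal factors, so that Andr\'eief's identity produces a matrix of inner products $\big(\int \phi_i\phi_j\big)$ whose determinant is automatically nonnegative. Here the integrand has a \emph{single} Vandermonde $\prod_{r<s}(u_s-u_r)=\det(u_r^{s-1})$, and moreover the $u_r$ range over \emph{distinct} intervals $(x_{a_r},x_{b_r})$; carrying out the integration therefore gives, up to a phase and the prefactor $f^{(0)}$, the period-matrix determinant $\det P_\beta$ (this is precisely~\eqref{eqn::PartF_detP_general}). That determinant is complex, not a Gram determinant, and the Riemann-surface nondegeneracy results (Lemma~\ref{lem: Acirc_matrix_invertible}, Corollary~\ref{cor: A_matrix_invertible}) only tell you $\det P_\beta\neq 0$ — they say nothing about its argument. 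So the step "the relevant quantity is a nonnegative determinant" is not available, and the real/nonnegativity claim is left unproved. Similarly, peeling off a simple link via Lemma~\ref{lem: neighbor asy} controls the limit of $\LF_\beta/(x_{j+1}-x_j)^{1/4}$ as $x_j,x_{j+1}\to\xi$, i.e.\ behavior on a boundary stratum of $\chamber_{2N}$; by itself this does not fix the sign of $\LF_\beta$ at a generic interior point unless one \emph{already} knows $\LF_\beta$ is real-valued and nonvanishing, and real-valuedness is exactly as hard as what you are trying to prove.

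What the paper actually does is purely combinatorial and bypasses any determinant-positivity argument: Lemma~\ref{lem: coefficients positive} shows that every coefficient $c_\beta(\boldsymbol{k})$ equals $0$ or $1$. The proof constructs, for each admissible $\boldsymbol{k}$, an explicit sign-reversing involution $\iota_{\beta,\boldsymbol{k}}$ on the permutations contributing to $c_\beta(\boldsymbol{k})$; pairs $(\sigma,\iota(\sigma))$ with $\iota(\sigma)\neq\sigma$ cancel via~\eqref{eq: coef_cancel}, and the surviving fixed points reduce the phase via~\eqref{eq: coef_reduce} to match, inductively, $c_{\hat\beta}(\boldsymbol{\hat k})$ for the link pattern with the outer link removed. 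Once the coefficients are in $\{0,1\}$, positivity follows immediately because at least one term (e.g.\ $\boldsymbol{k}=(a_1,\ldots,a_N)$) has coefficient $1$ and $\SimplexIntKappa8_{\boldsymbol{a}}(\boldsymbol{x})>0$. This involution argument is the essential missing ingredient in your proposal; everything else you set up is compatible with it, but without it the phases are not controlled.
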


\begin{proof}
It follows from Corollary~\ref{cor: F_real} that $\LF_{\beta}(\boldsymbol{x}) \in \R$ 
and from Proposition~\ref{prop::two_Fs_as_determinants} that $\LF_{\beta}(\boldsymbol{x}) \neq 0$, for all $\boldsymbol{x}\in\chamber_{2N}$ and $\beta\in\LP_N$. 
As each function $\boldsymbol{x} \mapsto \LF_{\beta}(\boldsymbol{x})$ is continuous on $\chamber_{2N}$, we see that for each $\beta\in\LP_N$, we have either $\LF_{\beta} < 0$ on $\chamber_{2N}$,  
or $\LF_{\beta} > 0$ on $\chamber_{2N}$.
By the recursive asymptotics~(\ref{eqn::USTASY1},~\ref{eqn::USTASY2})  combined with the normalization $\LF_{\emptyset} \equiv 1$, we see that all of them must be simultaneously positive. 
\end{proof} 
}


\smallskip{}

\section{Uniform spanning tree in polygons}
\label{sec::ust}

In this section, we consider scaling limits of Peano curves for uniform spanning trees (UST).
In the pioneering work~\cite{LawlerSchrammWernerLERWUST}, 
Lawler, Schramm \& Werner showed that Schramm's $\SLE_\kappa$ curve~\cite{SchrammScalinglimitsLERWUST} with $\kappa = 8$
describes the scaling limit of the UST Peano curve with Dobrushin boundary conditions. 
We will address the general case of any number of Peano curves, with boundary conditions $\beta$ encoded in~\eqref{eq: link pattern ordering}.
Such generalizations have been also considered  in~\cite{DubedatEulerIntegralsCommutingSLEs, KenyonWilsonBoundaryPartitionsTreesDimers, HanLiuWuUST}.
To identify the limit and show its uniqueness, one uses a discrete holomorphic observable. 
The case of two Peano curves with alternating (free/wired/free/wired) boundary conditions was detailed in~\cite{HanLiuWuUST} 
by using Smirnov's four-point observable.
However, in order to address the general case of any number of Peano curves,
it is crucial to find a new appropriate observable (see Definition~\ref{def: exploration path} and 
Lemma~\ref{lem::holo_general}).
For cases involving non-trivial conformal moduli, 
in the simplest example $\beta=\unnested$ (as in~\eqref{eqn::unnested}),  
where every other boundary interval is independently wired, 
the scaling limit of the relevant observable for the identification was also pointed out in~Dub\'edat~\cite[Section~3.3]{DubedatEulerIntegralsCommutingSLEs} 
and Kenyon~\&~Wilson~\cite[Section~5.2]{KenyonWilsonBoundaryPartitionsTreesDimers}.
We provide a general formula for it in Proposition~\ref{prop::holo_cvg} and Appendix~\ref{app::SC_mappings}.

\smallbreak

The main result of this section, Theorem~\ref{thm::ust_general}, 
describes the scaling limit curves explicitly in terms of $\SLE_8$ type processes 
with specific partition functions (namely $\LF_{\beta}$ introduced in 
Definition~\ref{def: Lbeta}, Section~\ref{subsec::Coulomb_gas_Fbeta}). 
The proof of Theorem~\ref{thm::ust_general} uses techniques from discrete complex analysis,
conventional for addressing conformally invariant scaling limits of discrete systems.
To begin, we collect some preliminaries.

\subsubsection*{Square lattice}
$\Z^2$ is the graph with vertex set $V(\Z^2):=\{(m, n) \colon m, n\in \Z\}$ and edge set $E(\Z^2)$ given by edges between nearest neighbors (i.e., pairs of vertices with distance one).
This is our primal lattice. Its dual lattice is denoted by $(\Z^2)^*$. The medial lattice $(\Z^2)^{\diamond}$ is the graph with centers of edges of $\Z^2$ as vertex set and edges connecting nearest neighbors. 
In this article, when we add the subscript or superscript $\delta$, we mean that subgraphs of the lattices $\Z^2, (\Z^2)^*, (\Z^2)^\diamond$ have been scaled by $\delta > 0$. 
We shall consider the models in the scaling limit $\delta \to 0$.

\subsubsection*{Discrete holomorphicity}
A function $\phi \colon \Z^2 \cup (\Z^2)^* \to \C$ is (discrete) \emph{holomorphic} around a medial vertex $x^{\diamond}$ if we have $\phi(n)-\phi(s) = \ii \, (\phi(e)-\phi(w))$, where $n, w, s, e$ are the vertices incident to $x^{\diamond}$ in counterclockwise order
(two of them are vertices of $\Z^2$ and the other two are vertices of $(\Z^2)^*$). 
We say that $\phi$ is holomorphic on a subgraph of $\Z^2\cup(\Z^2)^*$ if it is holomorphic at all vertices in the subgraph. 
See~\cite[Section~8]{DuminilCopinParafermionic} for more details.

\subsubsection*{Uniform spanning tree (UST)}
Suppose that $G=(V,E)$ is a finite connected graph. 
A \emph{forest} is a subgraph of $G$ that has no loops. 
A \emph{tree} is a connected forest. 
A subgraph of $G$ is \emph{spanning} if it covers $V$. 
The \emph{uniform spanning tree} (UST) on $G$ is a probability measure on the set of all spanning trees of $G$ in which every tree is chosen with equal probability.  
Given a disjoint set $\tau = \bigcup_{k=1}^N \tau_k$ of trees $\tau_k$ of $G$, a spanning tree with $\tau$ \emph{wired} is a spanning tree $T$ of $G$ such that $\tau \subset T$.
The \emph{uniform spanning tree with $\tau$ wired} is a probability measure on the set of all spanning trees of $G$ with $\tau$ wired in which every tree is chosen with equal probability.
In this article, we focus on UST in polygons, with various boundary conditions described via  wired boundary arcs.

\subsubsection*{Discrete polygons}

Informally speaking, a discrete polygon is a bounded simply connected subgraph $\Omega$ of $\Z^2$ 
with $2N$ fixed boundary points $x_1, x_2, \ldots, x_{2N}$ in counterclockwise order, 
and such that the 
``odd''
boundary arcs $(x_{2r-1} \, x_{2r})$ with $1\le r\le N$ are on the primal lattice $\Z^2$, 
and the 
``even''
boundary arcs $(x_{2r} \, x_{2r+1})$ with $1\le r\le N$ are on the dual lattice $(\Z^2)^*$. 
The precise definition is given below; see also Figure~\ref{fig::polygonpre} for an illustration.

Consider the medial lattice $(\Z^2)^{\diamond}$ with the following orientation of its edges:
edges of each face containing a vertex of $\Z^2$ are oriented counterclockwise, 
and edges of each face containing a vertex of $(\Z^2)^*$ are oriented clockwise.
Let $x_1^{\diamond}, \ldots, x_{2N}^{\diamond}$ be $2N$ distinct medial vertices, 
and let $(x_1^{\diamond} \, x_2^{\diamond}), \ldots, (x_{2N-1}^{\diamond} \, x_{2N}^{\diamond}), (x_{2N}^{\diamond} \, x_1^{\diamond})$ denote $2N$ oriented 
paths 
on $(\Z^2)^{\diamond}$
satisfying the following conditions\footnote{Throughout, we use the cyclic indexing convention $x_{2N+1}^{\diamond} := x_{1}^{\diamond}$ and $x_{2N+1} := x_{1}$ etc.}: 
\begin{itemize}[leftmargin=2em]
\item 
each path $(x_{2r-1}^{\diamond} \, x_{2r}^{\diamond})$ has clockwise oriented edges for $1\le r\le N$; 

\smallbreak

\item  
each path $(x_{2r}^{\diamond} \, x_{2r+1}^{\diamond})$ has counterclockwise oriented edges for $1\leq r \leq N$; and

\smallbreak

\item 
all paths are edge-avoiding and satisfy 
$(x_{i-1}^{\diamond} \, x_{i}^{\diamond}) \cap (x_i^{\diamond} \, x_{i+1}^{\diamond})=\{x_i^{\diamond}\}$ for $1\le i\le 2N$. 
\end{itemize}
Given  
$\{ (x_i^{\diamond} \, x_{i+1}^{\diamond}) \colon 1\le i\le 2N \}$, 
the \emph{medial polygon} $(\Omega^{\diamond}; x_1^{\diamond}, \ldots, x_{2N}^{\diamond})$ is defined as the subgraph of $(\Z^2)^{\diamond}$ induced by the vertices enclosed by or lying on the 
non-oriented loop $\partial\Omega^\diamond$ obtained by concatenating all of $(x_i^{\diamond} \, x_{i+1}^{\diamond})$, illustrated in blue in Figure~\ref{fig::polygonpre}.

Next, let $\Omega\subset\Z^2$ be the graph with edge set consisting of all edges passing through endpoints of medial edges 
in $E(\Omega^{\diamond})\setminus \bigcup_{r=1}^N (x_{2r}^{\diamond} \, x_{2r+1}^{\diamond})$ 
and vertex set comprising the endpoints of these edges. 
For each $i \in \{1,2,\ldots,2N\}$, 
we denote by $x_i$ the vertex of $\Omega$ nearest to $x_i^{\diamond}$, and we call $(\Omega; x_1, \ldots, x_{2N})$ the \emph{primal polygon}\footnote{A cautious reader will notice that we abuse the notation slightly: 
in some occasions, we use $(\Omega; x_1, \ldots, x_{2N})$ to indicate a (continuum) polygon, i.e., 
a bounded simply connected domain $\Omega$ with $2N$ distinct marked boundary points; 
while in some other occasions, we use $(\Omega; x_1, \ldots, x_{2N})$ to indicate a primal polygon with $2N$ distinct marked boundary vertices. 
We believe this will not cause confusion, as it is clear from the context which one is being considered.}. 
We let $(x_{2r-1} \, x_{2r})$ be the set of edges corresponding to medial vertices in $(x_{2r-1}^{\diamond} \, x_{2r}^{\diamond})\cap\partial\Omega^{\diamond}$. 
Similarly, let $\Omega^*\subset (\Z^2)^*$ be the graph with edge set consisting of all edges passing through endpoints of medial edges in $E(\Omega^{\diamond})\setminus \bigcup_{r=1}^N(x_{2r-1}^{\diamond} \, x_{2r}^{\diamond})$ 
and vertex set comprising the endpoints of these edges. 
For each $i \in \{1,2,\ldots,2N\}$, 
we denote by $x_i^*$ the vertex of $\Omega^*$ nearest to $x_i^{\diamond}$, and we call $(\Omega^*; x_1^*, \ldots, x_{2N}^*)$ the \emph{dual polygon}. 
Lastly, we let $(x_{2r}^* \, x_{2r+1}^*)$ be the set of edges corresponding to medial vertices in $(x_{2r}^{\diamond} \, x_{2r}^{\diamond})\cap\partial\Omega^{\diamond}$. 
We will assume that $\Omega$ and $\Omega^*$ form bounded simply connected domains, ensuring the existence of spanning trees on both graphs.

\begin{figure}[ht!]
\begin{center}
\includegraphics[width=0.4\textwidth]{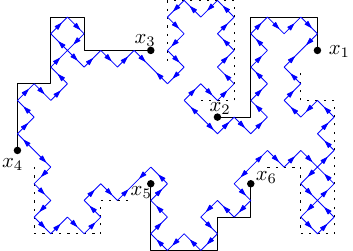}
\end{center}
\caption{\label{fig::polygonpre} 
Illustration of the boundary of a discrete polygon with six marked points on the boundary. 
The boundary arcs $(x_1 \, x_2)$, $(x_3\, x_4)$, and $(x_5 \, x_6)$ are wired.
The dashed arcs are parts of the boundary of the dual polygon.
The blue loop is the boundary $\partial\Omega^\diamond$ of the medial polygon, with the alternating orientations of its components  illustrated.}
\end{figure}

\subsubsection*{Boundary conditions and Peano curves}

We consider the following UST model on the primal polygon 
$(\Omega; x_1, \ldots, x_{2N})$.
First, each ``odd'' arc $(x_{2r-1} \, x_{2r})$ 
is wired, for $1\le r\le N$, 
and second, some of these $N$ arcs are further wired together outside of $\Omega$ according to a non-crossing partition described by a planar link pattern $\beta\in\LP_N$, 
represented by $N$ disjoint chords 
traversing between the primal components and the dual components of the non-crossing partition --- 
see Figure~\ref{fig::8pointsE_meander}(a).  
We say that the UST has \emph{boundary condition} (b.c.)~$\beta$.

Suppose that $\tree$ is a spanning tree of the primal polygon $(\Omega; x_1, \ldots, x_{2N})$ 
with b.c. $\beta\in\LP_N$. 
Then, there exist $N$ paths on $(\Z^2)^{\diamond}$ running along $\tree$ and connecting among $\{x_1^{\diamond}, \ldots, x_{2N}^{\diamond}\}$, which we call \emph{Peano curves} --- see Figure~\ref{fig::polygon}(b). 
The endpoints of these $N$ Peano curves form a random planar link pattern $\conn$ in $\LP_N$. 
As $\tree$ is a spanning tree, we see that the loop configuration formed from the chords of $\conn$ inside $\Omega$ 
and the chords of $\beta$ outside of $\Omega$ must have exactly one loop --- see Figure~\ref{fig::8pointsE_meander}(b).

\begin{figure}[ht!]
\begin{subfigure}[b]{0.3\textwidth}
\begin{center}
\includegraphics[width=0.45\textwidth]{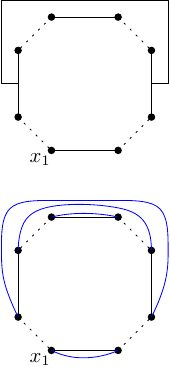}
\end{center}
\caption{A boundary condition (b.c.) $\beta$.}
\end{subfigure}
\begin{subfigure}[b]{0.675\textwidth}
\begin{center}
\includegraphics[width=0.2\textwidth]{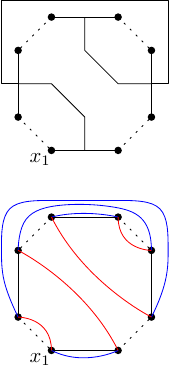}$\;$
\includegraphics[width=0.2\textwidth]{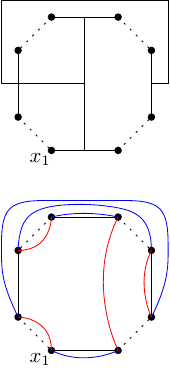}$\;$
\includegraphics[width=0.2\textwidth]{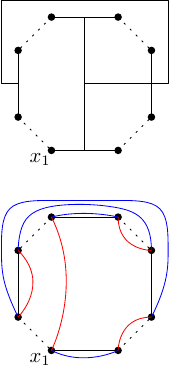}$\;$
\includegraphics[width=0.2\textwidth]{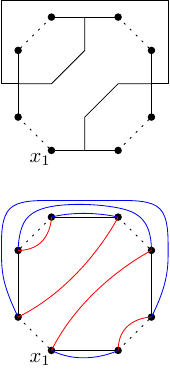}
\end{center}
\caption{Four possible planar link patterns $\alpha$ with $\LM_{\alpha,\beta}=1$.}
\end{subfigure}
\caption{\label{fig::8pointsE_meander} 
Consider discrete polygons with eight marked points on the boundary. 
One possible boundary condition 
for the UST model is indicated in the left panel. 
This b.c., encoded in the planar link pattern $\beta=\{\{1,2\},\{3,8\},\{4,7\},\{5,6\}\}$ (bottom), 
corresponds to the non-crossing partition $\{\{1\}, \{2,4\}, \{3\}\}$ of the $N$ wired boundary arcs outside of the polygon (top).
The corresponding possible planar link patterns $\conn$ formed by the Peano curves are indicated in the right panel 
(bottom), and they also correspond to non-crossing partitions inside the polygon (top). Indeed, there is a natural bijection between non-crossing partitions of the $N$ wired boundary arcs and planar link patterns with $N$ links. This correspondence will be used in Sections~\ref{sec::ust} and~\ref{sec::ppf}.
}
\end{figure}

\subsubsection*{Outline of this section}

We will consider discrete polygons approximating some continuum polygon in the plane. 
Fix $N\ge 1$ and a polygon $(\Omega; x_1, \ldots, x_{2N})$ whose boundary $\partial\Omega$ is a $C^1$-Jordan curve. 
Suppose that a sequence $(\Omega^{\delta, \diamond}; x_1^{\delta, \diamond}, \ldots, x_{2N}^{\delta, \diamond})$  of medial polygons converges to $(\Omega; x_1, \ldots, x_{2N})$ in the sense detailed in Equation~\eqref{eqn::polygon_cvg}. 
We consider the UST on the primal polygon $(\Omega^{\delta}; x_1^{\delta}, \ldots, x_{2N}^{\delta})$ with b.c. $\beta\in\LP_N$.
For each index $i \in \{1,2,\ldots,2N\}$, let $\eta_i^{\delta}$ be the Peano curve started from $x_{i}^{\delta, \diamond}$. 
Let us first note that each family $\{\eta_i^{\delta}\}_{\delta>0}$ is precompact, so we can consider its subsequential scaling limits as $\delta \to 0$.

\begin{lemma}\label{lem::Peanocurve_tight}
Assume the same setup as in Theorem~\ref{thm::ust_general}. Fix $i\in\{1, 2, \ldots, 2N\}$. 
The family of laws of $\{\eta^{\delta}_i\}_{\delta>0}$ is 
precompact in the curve space~\eqref{eqn::metric_curvesspace}.
Furthermore, any subsequential limit 
does not hit any other point in $\{x_1, x_2, \ldots, x_{2N}\}$ than its two endpoints, almost surely. 
\end{lemma}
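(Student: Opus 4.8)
The plan is to establish tightness via a standard Russo–Seymour–Welsh (RSW) type crossing estimate for the UST, together with a control of the Peano curves near the marked points. For tightness, I would invoke Aizenman–Burchard's criterion: it suffices to show that, uniformly in $\delta$, the probability that $\eta_i^\delta$ makes an annulus crossing of an annulus $A(z; r, R) = \{w : r < |w-z| < R\}$ with $r/R$ small decays as a power of $r/R$, uniformly over $z$ and over the choice of annulus. The key input here is that a UST Peano curve, being the interface between the tree $\LT_\delta$ and its dual tree (the complementary spanning tree on $(\Omega^\delta)^*$), crosses such an annulus only if \emph{either} the primal tree \emph{or} the dual tree contains a crossing of the annulus inside one of its branches; an $m$-fold crossing forces $m$ disjoint primal-or-dual branches crossing the annulus. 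Since the UST on $\Z^2$ (and its Peano curve, already known to converge to $\mathrm{SLE}_8$ in the Dobrushin case~\cite{LawlerSchrammWernerLERWUST}) enjoys the requisite polynomial decay of multi-arm crossing probabilities --- this is by now classical and can be extracted, e.g., from the convergence of the single Peano curve together with Wilson's algorithm and known loop-erased random walk estimates --- the Aizenman–Burchard moment bound follows, giving tightness of $\{\eta_i^\delta\}_{\delta>0}$ in the metric~\eqref{eqn::metric_curves}. One must check that the boundary conditions (the wired arcs $(x_{2i-1}^\delta\,x_{2i}^\delta)$ wired further according to $\beta$) do not spoil these estimates; this is handled by noting that wiring only decreases the relevant crossing events by monotonicity considerations available for the UST, or directly by comparing to the free/wired cases near the boundary, using that $\partial\Omega$ is a $C^1$-Jordan curve so the discrete domains are uniformly ``nice'' (e.g., satisfy a uniform interior/exterior cone or corkscrew condition).

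For the second assertion --- that a subsequential limit $\eta_i$ does not touch any $x_k$ for $k \neq i$ and $k \neq$ (the index of the other endpoint of $\eta_i$) --- I would again use a crossing estimate, now localized near each $x_k$. Concretely, fix such a $k$ and a small $\epsilon > 0$; I want to show that the probability that $\eta_i^\delta$ enters the ball $B(x_k, \epsilon)$ is bounded by a quantity tending to $0$ as $\epsilon \to 0$, uniformly in $\delta$. The point is the combinatorics of the boundary: near $x_k$ the boundary of $\Omega^\delta$ carries a prescribed pattern of wired (primal) arc on one side and free (dual) arc on the other, and for $\eta_i^\delta$ to reach $B(x_k,\epsilon)$ while $x_k$ is not its endpoint, the curve must cross the dyadic annuli $A(x_k; 2^{-j-1}\epsilon, 2^{-j}\epsilon)$ inside $\Omega^\delta$, which --- by the same primal/dual branch argument as above --- forces a crossing arm event in $\Omega^\delta$ near the boundary point $x_k$. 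The boundary arm exponent for the UST Peano curve is strictly positive (cf. Remark~\ref{rem::boundary_arm} and~\cite{WuZhanSLEBoundaryArmExponents}, where the relevant one-arm boundary exponent for $\kappa = 8$ is positive), so this probability is bounded by $C\epsilon^{c}$ with $c > 0$, uniformly in $\delta$. Passing to the subsequential limit and then letting $\epsilon \to 0$ yields that $\eta_i$ avoids $x_k$ almost surely; a union bound over the finitely many bad $k$ finishes the claim.

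The main obstacle, I expect, is not the abstract Aizenman–Burchard machinery but rather the uniformity of the crossing/arm estimates \emph{down to the boundary} and \emph{in the presence of the general boundary condition $\beta$}. The wirings prescribed by $\beta$ identify several boundary arcs into a single tree component, which could in principle create long-range dependence; one needs an argument (e.g., via Wilson's algorithm rooted appropriately, or via the transfer-current/Burton–Pemantle determinantal structure of the UST) showing that such wirings only help, i.e., that the relevant crossing probabilities in the polygon with boundary condition $\beta$ are dominated by those in a reference domain (such as a half-plane or a disk) up to a universal constant depending only on the regularity of $\partial\Omega$. Establishing this domination cleanly --- and verifying that the $C^1$ regularity of $\partial\Omega$ together with the convergence~\eqref{eqn::polygon_cvg} gives uniform control of the discrete domains near each $x_k$ --- is the technical heart of the lemma; the rest is a routine assembly of known ingredients.
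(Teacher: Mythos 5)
Your proposal captures the substance of what the paper's (two-line) proof delegates to: the paper simply cites \cite[Theorems~4.7--4.8]{LawlerSchrammWernerLERWUST} and \cite[Proposition~4.3]{HanLiuWuUST} for tightness, and \cite[Lemma~4.4]{HanLiuWuUST} for the non-hitting statement, and the content of those references is precisely the Aizenman--Burchard annulus-crossing criterion applied via the observation that a $k$-fold Peano crossing forces disjoint primal-or-dual tree branches to cross, whose probability is controlled by Wilson's algorithm and LERW estimates, together with a boundary arm-type estimate near the marked points. So the route is the same and the proposal is sound. Two small cautions on phrasing: the remark that ``wiring only decreases the relevant crossing events by monotonicity'' is not a clean monotonicity statement for the UST (which is negatively associated, but crossing events are not simply monotone in the wiring); the references instead get the uniform-in-$\beta$ bound directly from Wilson's algorithm, which applies to any wired boundary configuration, and you in fact already offer this alternative. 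Also, for the second assertion you cite the $\SLE_8$ boundary arm exponents of \cite{WuZhanSLEBoundaryArmExponents}, but to get a bound \emph{uniform in $\delta$} one needs the corresponding discrete UST/LERW estimate (as in \cite[Lemma~4.4]{HanLiuWuUST}), not the continuum exponent; the discrete estimate does hold, so the conclusion stands.
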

\begin{proof}
The same argument as in~\cite[Theorems~4.7~\&~4.8]{LawlerSchrammWernerLERWUST} 
(see also~\cite[Proposition~4.3]{HanLiuWuUST}) 
shows the precompactness.
The second statement can be proven similarly as~\cite[Proposition~4.3]{HanLiuWuUST}.
\end{proof}

The goal of this section is to derive explicitly the scaling limit of the law of $\eta_i^{\delta}$ as $\delta\to 0$. 
We follow the standard strategy: 
first, we have precompactness of the sequence from Lemma~\ref{lem::Peanocurve_tight}; 
second, we construct a suitable  martingale observable in Section~\ref{subsec::holo_general}; 
and we then identify all subsequential limits through this observable in Sections~\ref{subsec::ust_cvg_observable}--\ref{subsec::ust_Peano_conv}.
The explicit identification relies on somewhat complicated analysis of the scaling limit of the observable, 
which --- quite interestingly --- is closely related to both the $a$-period matrices discussed in Section~\ref{sec::coulombgasintegrals_new}, 
and to explicit Schwarz-Christoffel type conformal mappings discussed in Appendix~\ref{app::SC_mappings}.

For definiteness, we shall construct the observable explicitly, and derive the limit of the Peano curve $\eta_1^{\delta}$ with $i=1$, in the case where $\{1, 2N\}\not\in\beta$. 
The general case follows from this 
after conjugating by a suitable M\"obius transformation, 
by Proposition~\ref{prop: full Mobius covariance F}, and possibly working with the dual tree instead of the primal tree.

\subsection{Exploration path and discrete holomorphic observable}
\label{subsec::holo_general}

Throughout, we 
fix $N\ge 2$ and a b.c. $\beta\in\LP_N$ with link endpoints ordered as in~\eqref{eq: link pattern ordering} and such that $\{1, 2N\}\not\in\beta$. 

\begin{figure}[ht!]
\begin{subfigure}[b]{0.3\textwidth}
\begin{center}
\includegraphics[width=0.5\textwidth]{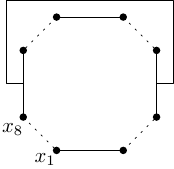}
\end{center}
\caption{A boundary condition (b.c.) $\beta$.}
\end{subfigure}
\begin{subfigure}[b]{0.675\textwidth}
\begin{center}
\includegraphics[width=0.22\textwidth]{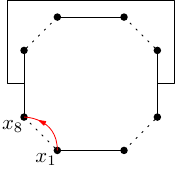}$\;$
\includegraphics[width=0.22\textwidth]{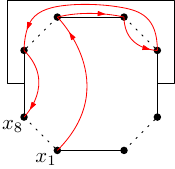}$\;$
\includegraphics[width=0.22\textwidth]{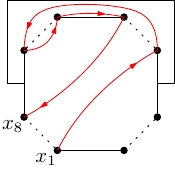}
\end{center}
\caption{Three possibilities for the exploration path from $x_1$ to $x_8$.}
\end{subfigure}
\caption{\label{fig::8pointsE_curve} 
Consider discrete polygons with eight marked points on the boundary. 
One possible boundary condition $\beta=\{\{1,2\},\{3,8\},\{4,7\},\{5,6\}\}$ is indicated in the left panel. 
The corresponding exploration path from $x_1$ to $x_8$ is indicated in the right panel.}
\end{figure}

Consider the set of spanning trees of the primal polygon $(\Omega^{\delta}; x_1^{\delta}, \ldots, x_{2N}^{\delta})$ with b.c. $\beta\in\LP_N$. 
Let $\tree_\beta^\delta$ be chosen uniformly among these spanning trees. 
The $N$ Peano curves $\smash{\eta^{\delta}_{2r-1}}$ started from $\smash{x_{2r-1}^{\delta, \diamond}}$ terminate among the medial vertices $\smash{\{x_{2r}^{\delta, \diamond} \colon 1\le r \le N\}}$. 
We define an exploration path $\xi_\beta^\delta$ along $\tree_\beta^\delta$ starting from $x_1^{\delta, \diamond}$ and terminating at $x_{2N}^{\delta, \diamond}$, which detects the meander formed from the random Peano curves inside $\Omega$ (encoded in $\conn$ in $\LP_N$)
and the given chords of the b.c.~$\beta$ outside of $\Omega$, via the following procedure (see Figure~\ref{fig::8pointsE_curve}).

\begin{definition} \label{def: exploration path}
The following rules uniquely determine 
$\xi_\beta^\delta$, called the \emph{exploration path} associated to the spanning tree $\tree_\beta^\delta$ with b.c.~$\beta$. 
\begin{enumerate} 
\item \label{item::obs-a}
$\xi_\beta^\delta$ starts from $x_1^{\delta, \diamond}$ and follows $\eta_1^{\delta}$ until it reaches some point in $\{x_{2r}^{\delta, \diamond} \colon 1\le r \le N\}$. 

\smallbreak

\item \label{item::obs-b}
When $\xi_\beta^\delta$ arrives at some point in $\{x_{2r}^{\delta, \diamond} \colon 1\le r \le N\}$, it follows the chord given by $\beta$ outside of $\Omega^{\delta}$ until it reaches some point in $\{x_{2r-1}^{\delta, \diamond} \colon 1\le r \le N\}$. 

\smallbreak

\item \label{item::obs-c}
When $\xi_\beta^\delta$ arrives at some point in $\{x_{2r-1}^{\delta, \diamond} \colon 1\le r \le N\}$, it follows the corresponding Peano curve along $\tree_\beta^\delta$ until it reaches some point in $\{x_{2r}^{\delta, \diamond}: 1\le r \le N\}$.

\smallbreak

\item \label{item::obs-d}
After repeating the steps~\ref{item::obs-b}--\ref{item::obs-c} sufficiently many times, 
$\xi_\beta^\delta$ arrives at $x_{2N}^{\delta, \diamond}$ and it then stops. 
\end{enumerate}
\end{definition}

Now we are ready to define the observable. We summarize the setup below. 
\begin{itemize} 
\item 
Consider the set $\st_\beta^\delta$
of spanning trees of the primal polygon $(\Omega^{\delta}; x_1^{\delta}, \ldots, x_{2N}^{\delta})$ with b.c. $\beta\in\LP_N$. 
Let $\tree_\beta^\delta$ be chosen uniformly among these spanning trees, 
and denote by $\xi_\beta^\delta$ the exploration path from $\smash{x_1^{\delta, \diamond}}$ to $x_{2N}^{\delta, \diamond}$.    
For each vertex $z^*$ of $\Omega^{\delta, *}$, define 
\begin{align*}
u_\beta^\delta(z^*) := \PP_{\beta}^{\delta} \big[ \textnormal{$z^*$ lies to the right of $\xi_\beta^\delta$} \big] .
\end{align*}

\smallbreak

\item 
Consider the set  
$\sf_\beta^\delta$
of spanning forests of the primal polygon $(\Omega^{\delta}; x_1^{\delta}, \ldots, x_{2N}^{\delta})$ with b.c. $\beta\in\LP_N$ 
which consist of exactly two trees: one of them
contains the wired arc $(x_1^{\delta} \, x_2^{\delta})$ and the other one contains the wired arc $(x_{2N-1}^{\delta} \, x_{2N}^{\delta})$. 
Let $\forest_\beta^\delta$ be chosen uniformly among these forests. 
For each vertex $z$ of $\Omega^{\delta}$, define 
\begin{align*}
v_\beta^\delta(z) := \PP_{\beta}^{\delta} \big[ \textnormal{$z$ lies in the same tree as the wired arc $(x_{2N-1}^{\delta} \, x_{2N}^{\delta})$ in $\forest_\beta^\delta$} \big] .
\end{align*}
\end{itemize}

We will also use the following notation. 
We consider the connected components (c.c.) of the complement of $\beta$ outside of $\Omega$. 
These c.c.'s have a chequerboard structure:  
we call the c.c.'s touching the ``even'' arcs $\bigcup_{r=1}^N (x_{2r}^{\delta, *} \, x_{2r+1}^{\delta, *})$ ``black'' and denote them 
$\LC_0^\bullet,\LC_1^\bullet,\ldots,\LC_{n(\beta)}^\bullet$, where $n(\beta)$ depends on the nesting of $\beta$, 
and by convention we denote the c.c. containing the interval $(x_{2N}^{\delta, *} \, x_{1}^{\delta, *})$ as $\LC_0^\bullet$. 
We similarly call the c.c.'s touching the ``odd'' arcs $\bigcup_{r=1}^N (x_{2r-1}^{\delta} \, x_{2r}^{\delta})$ ``white'' and denote them 
$\LC_1^\circ,\ldots,\LC_{m(\beta)}^\circ$.

\begin{lemma} \label{lem::holo_general}
The function $\phi_{\beta}^{\delta} \colon \Omega^\delta\cup \Omega^{\delta,*} \to \C$ defined as
\begin{align*}
\phi_{\beta}^{\delta}(\cdot) := u_\beta^\delta(\cdot) + \ii \, \frac{|\sf_\beta^\delta|}{|\st_\beta^\delta|} \, v_\beta^\delta(\cdot) , 
\end{align*}
which equals $u_\beta^\delta(\cdot)$ on $\Omega^{\delta, *}$ and 
$\ii \, \frac{|\sf_\beta^\delta|}{|\st_\beta^\delta|} \, v_\beta^\delta(\cdot)$ on $\Omega^\delta$, 
is discrete holomorphic on the set 
\begin{align*}
( \Omega^\delta\cup \Omega^{\delta,*} ) 
\setminus \big((x_1^{\delta} \, x_2^{\delta})\cup(x_2^{\delta,*} \, x_3^{\delta,*})\cup\cdots\cup(x_{2N-1}^{\delta} \, x_{2N}^{\delta})\cup(x_{2N}^{\delta,*} \, x_{1}^{\delta,*})\big) . 
\end{align*}
Moreover, it has the boundary data
\begin{align*}
\begin{cases}
\Re \phi_{\beta}^{\delta} \equiv 0 \; \textnormal{on }(x_{2N}^{\delta, *} \, x_1^{\delta, *}) ; \\[.3em]  
\Re \phi_{\beta}^{\delta} \equiv 1 \; \textnormal{on the other arcs in the black c.c. } \LC_0^\bullet \textnormal{ in the b.c. $\beta$} ; \\[.3em]   
\Re \phi_{\beta}^{\delta} \; \textnormal{ is constant along the other black c.c.'s } \LC_i^\bullet \textnormal{ in the b.c. $\beta$} ; \\[.3em]  
\Im \phi_{\beta}^{\delta} \equiv 0 \;\textnormal{on }(x_1^{\delta} \, x_2^{\delta}); \textnormal{ and } \\[.3em]  
\Im \phi_{\beta}^{\delta} \; \textnormal{ is constant along all white c.c.'s } \LC_j^\circ \textnormal{ in the b.c. $\beta$} .
\end{cases}
\end{align*}
\end{lemma}

\begin{proof}
The boundary data follows by construction, so we only need to show the discrete holomorphicity. 
For each vertex $z^*$ in $\Omega^{\delta, *}$, denote by $\st_\beta^\delta(z^*)$ the subset of $\st_\beta^\delta$ consisting of spanning trees 
such that $z^*$ lies to the right of $\xi_\beta^\delta$. 
For each vertex $z$ in $\Omega^{\delta}$, denote by $\sf_\beta^\delta(z)$ the subset of $\sf_\beta^\delta$ consisting of spanning forests such that $z$ lies in the same tree as the wired arc $(x_{2N-1}^{\delta} \, x_{2N}^{\delta})$. 
If $\langle n, s\rangle$ is a primal edge of $\Omega^{\delta}$, the corresponding dual edge is denoted by $\langle w, e\rangle$ such that $n$, $w$, $s$, and $e$ are in counterclockwise order.
Now, we have
\begin{align*}
u_\beta^\delta(e)-u_\beta^\delta(w)
= \;\, & \PP_{\beta}^{\delta} \big[\tree_\beta^\delta\in \st_\beta^\delta( e)\big] - \PP_{\beta}^{\delta} \big[\tree_\beta^\delta\in \st_\beta^\delta( w)\big] \\
= \;\, & \PP_{\beta}^{\delta} \big[\tree_\beta^\delta\in \st_\beta^\delta( e)\setminus \st_\beta^\delta( w)\big] - \PP_{\beta}^{\delta} \big[\tree_\beta^\delta\in \st_\beta^\delta( w)\setminus\st_\beta^\delta( e)\big] \\
\overset{(\star)}{=} \; & 
\frac{|\sf_\beta^\delta|}{|\st_\beta^\delta|} \, 
\Big(
\PP_{\beta}^{\delta} \big[\forest_\beta^\delta\in \sf_\beta^\delta( n) \setminus\sf_\beta^\delta( s)\big] 
- 
\PP_{\beta}^{\delta} \big[\forest_\beta^\delta\in \sf_\beta^\delta( s) \setminus\sf_\beta^\delta( n)\big]
\Big) 
 \\
= \; & \frac{|\sf_\beta^\delta|}{|\st_\beta^\delta|}
\, \Big(\PP_{\beta}^{\delta} \big[\forest_\beta^\delta \in \sf_\beta^\delta( n)\big] 
- \PP_{\beta}^{\delta} \big[\forest_\beta^\delta\in \sf_\beta^\delta( s)\big] \Big) \\
= \; & \frac{|\sf_\beta^\delta|}{|\st_\beta^\delta|} \, ( v_\beta^\delta(n)-v_\beta^\delta(s) ) ,
\end{align*}
where ($\star$) follows by a simple bijection between spanning trees 
in $\st_\beta^\delta(e)\setminus\st_\beta^\delta(w)$  
and spanning forests 
in $\sf_\beta^\delta( n)\setminus\sf_\beta^\delta(s)$, 
obtained by deleting the edge $\langle n,s\rangle$.
\end{proof}

We will identify the limit of the observable $\phi_{\beta}^{\delta}$ as a conformal map from $\Omega$ onto a rectangle of unit width with horizontal or vertical slits, uniquely determined by the boundary data.

\subsection{Convergence of the observable}
\label{subsec::ust_cvg_observable}

We still fix a boundary condition $\beta\in\LP_N$ with link endpoints ordered as in~\eqref{eq: link pattern ordering} and such that 
$\{1, 2N\}\not\in\beta$.
In Lemma~\ref{lem::holo_general}, we have constructed a discrete holomorphic observable $\phi_{\beta}^{\delta}$. 
In the course of the present and the subsequent sections, 
we will see that it converges as $\delta \to 0$ to its continuum analogue: 
a conformal map $\phi_{\beta}$ from the polygon $(\Omega;x_1,\ldots,x_{2N})$ onto a certain slit rectangle depending on $\beta$, 
explicitly given in terms of a (degenerate) Schwarz-Christoffel mapping, and uniquely determined by the boundary data in Lemma~\ref{lem::holo_general} --- for concrete examples,  see~Proposition~\ref{prop::holo_cvg} and Equation~\eqref{eq: Schwarz-Christoffel formula app} in Appendix~\ref{app::SC_mappings}, as well as Figure~\ref{fig::8points_observable}.

To address the convergence of the observable, we use a notion of convergence of polygons in terms of uniformizing maps with respect to the unit disc $\U = \{ z \in \C \colon |z| < 1 \}$.
As before, we regard any planar graph also as a planar domain 
by considering the union of all its vertices, edges, and faces.
We say that a sequence $(\Omega^{\delta}; x_1^{\delta}, \ldots, x_n^{\delta})$ of discrete polygons on $\delta\Z^2$ 
converges as $\delta \to 0$ to a polygon $(\Omega; x_1, \ldots, x_n)$ \emph{in the Carath\'{e}odory sense}\footnote{Equivalently, this could be phrased in terms of uniformizing maps with respect to the upper half-plane $\HH$.} 
if there exist conformal maps $\varphi_{\delta}$ from $\Omega^{\delta}$ onto $\U$,
and a conformal map $\varphi$ from $\Omega$ onto $\U$,
such that $\varphi_{\delta}^{-1} \to \varphi^{-1}$ locally uniformly on $\U$,
and $\varphi_{\delta}(x_j^{\delta}) \to \varphi(x_j)$ for all $1\le j\le n$.

The convergence of the observable will hold locally uniformly in the following sense.
If $\psi^{\delta}$ are functions on vertices of $\Omega^{\delta}$, 
we extend them to functions on the corresponding planar domains by linear interpolation.
Then, we say that the sequence $(\psi^{\delta})_{\delta>0}$ \emph{converges to a holomorphic function $\psi$ locally uniformly} 
as $\delta \to 0$ if
the corresponding maps $\psi^{\delta} \circ \varphi_{\delta}^{-1}$ converge to $\psi \circ \varphi^{-1}$ locally uniformly on $\U$.

\begin{proposition} \label{prop::holo_cvg}
Suppose that a sequence $(\Omega^{\delta, \diamond}; x_1^{\delta, \diamond}, \ldots, x_{2N}^{\delta, \diamond})$  of medial polygons converges to $(\Omega; x_1, \ldots, x_{2N})$ in the Carath\'{e}odory sense as $\delta\rightarrow 0$. 
Then, the discrete holomorphic function $\phi_{\beta}^{\delta}$ of Lemma~\ref{lem::holo_general} converges locally uniformly as $\delta\to 0$ to the unique holomorphic function $\phi_{\beta}$ on $\Omega$ with boundary data \textnormal{(}analogous to that in Lemma~\ref{lem::holo_general}\textnormal{)}
\begin{align} \label{eqn::boundarydata}
\begin{cases}
\Re \phi_{\beta} \equiv 0 \; \textnormal{on }(x_{2N} \, x_1) ; \\[.3em]  
\Re \phi_{\beta} \equiv 1 \; \textnormal{on the other arcs in the black c.c. } \LC_0^\bullet \textnormal{ in the b.c. $\beta$} ; \\[.3em]   
\Re \phi_{\beta} \; \textnormal{ is constant along the other black c.c.'s } \LC_i^\bullet \textnormal{ in the b.c. $\beta$} ; \\[.3em]  
\Im \phi_{\beta} \equiv 0 \;\textnormal{on }(x_1 \, x_2); \textnormal{ and } \\[.3em]  
\Im \phi_{\beta} \; \textnormal{ is constant along all white c.c.'s } \LC_j^\circ \textnormal{ in the b.c. $\beta$} .
\end{cases}
\end{align}
\end{proposition}

We emphasize that the convergence of the observable 
does not require any additional regularity of $\partial\Omega$: 
we only need that $\partial\Omega$ is locally connected. 
Moreover, we only require the convergence of polygons in the Carath\'{e}odory sense, 
that is weaker than that in Equation~\eqref{eqn::polygon_cvg}.
These two points are essential for the proof of Theorem~\ref{thm::ust_general}, since upon exploring discrete interfaces, we cannot guarantee much regularity for the boundaries of the domains thus obtained.

\begin{figure}[ht!]
\vspace*{-6mm}

\begin{subfigure}[b]{0.3\textwidth}
\begin{center}
\includegraphics[width=0.5\textwidth]{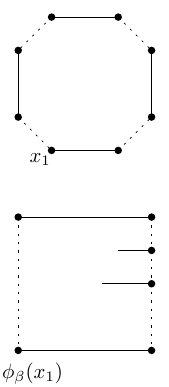}
\end{center}
\caption{\tiny{$\beta=\{\{1,2\}, \{3,4\}, \{5,6\}, \{7,8\}\}$.}}
\end{subfigure}
\begin{subfigure}[b]{0.3\textwidth}
\begin{center}
\includegraphics[width=0.5\textwidth]{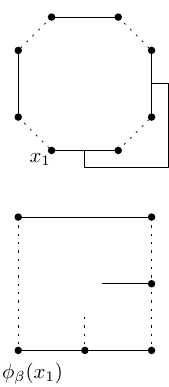}
\end{center}
\caption{\tiny{$\beta=\{\{1,4\},\{2,3\}, \{5,6\}, \{7,8\}\}$.}}
\end{subfigure}
\begin{subfigure}[b]{0.3\textwidth}
\begin{center}
\includegraphics[width=0.5\textwidth]{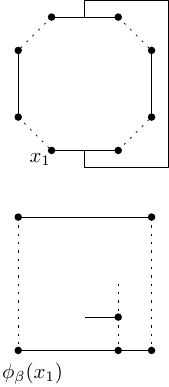}
\end{center}
\caption{\tiny{$\beta=\{\{1,6\}, \{2,5\}, \{3,4\}, \{7,8\}\}$.}}
\end{subfigure}\\

\vspace*{5mm}

\begin{subfigure}[b]{0.3\textwidth}
\begin{center}
\includegraphics[width=0.5\textwidth]{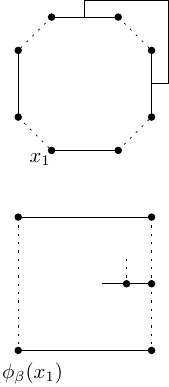}
\end{center}
\caption{\tiny{$\beta=\{\{1,2\}, \{3,6\}, \{4,5\}, \{7,8\}\}$.}}
\end{subfigure}
\begin{subfigure}[b]{0.3\textwidth}
\begin{center}
\includegraphics[width=0.5\textwidth]{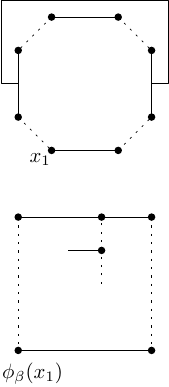}
\end{center}
\caption{\tiny{$\beta=\{\{1,2\},\{3,8\},\{4,7\},\{5,6\}\}$.}}
\end{subfigure}
\begin{subfigure}[b]{0.3\textwidth}
\begin{center}
\includegraphics[width=0.5\textwidth]{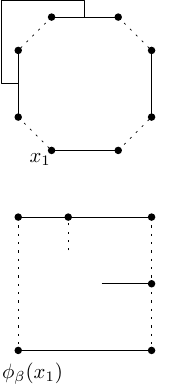}
\end{center}
\caption{\tiny{$\beta=\{\{1,2\},\{3,4\},\{5,8\}, \{6,7\}\}$.}}
\end{subfigure}\\

\vspace*{5mm}

\begin{subfigure}[b]{0.3\textwidth}
\begin{center}
\includegraphics[width=0.5\textwidth]{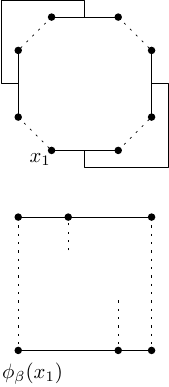}
\end{center}
\caption{\tiny{$\beta=\{\{1,4\}, \{2,3\}, \{5,8\}, \{6,7\}\}$.}}
\end{subfigure}
\begin{subfigure}[b]{0.3\textwidth}
\begin{center}
\includegraphics[width=0.5\textwidth]{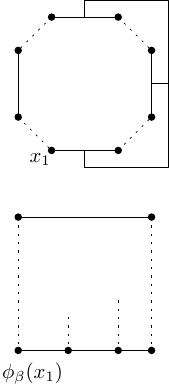}
\end{center}
\caption{\tiny{$\beta=\{\{1,6\}, \{2,3\}, \{4,5\}, \{7,8\}\}$.}}
\end{subfigure}
\begin{subfigure}[b]{0.3\textwidth}
\begin{center}
\includegraphics[width=0.5\textwidth]{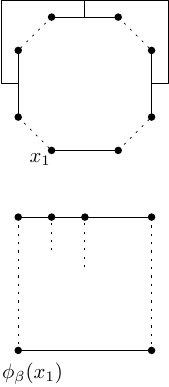}
\end{center}
\caption{\tiny{$\beta=\{\{1,2\}, \{3,8\}, \{4,5\}, \{6,7\}\}$.}}
\end{subfigure}

\caption{\label{fig::8points_observable}
For discrete polygons with eight marked points,
there are nine possibilities for planar boundary conditions $\beta$ such that $\{1,8\}\not\in\beta$. 
In each case, we draw the boundary conditions on the top, 
and the corresponding image of the conformal map 
$\phi_{\beta}$ in Proposition~\ref{prop::holo_cvg}
at the bottom.
}
\end{figure}

{
\begin{proof}[Proof of Proposition~\ref{prop::holo_cvg}]
The proof has the following three main steps. 
\begin{enumerate}
\item \label{step1}
The sequence 
$(\phi_{\beta}^{\delta})_{\delta>0}$
is uniformly bounded and its any subsequential limit under the locally uniform convergence is holomorphic on $\Omega$. 

\smallbreak

\item \label{step2}
Any subsequential limit $\psi$ 
has the boundary data~\eqref{eqn::boundarydata}. 

\smallbreak

\item \label{step3}
The boundary data~\eqref{eqn::boundarydata} uniquely determines $\psi=\phi_{\beta}$.
\end{enumerate}

\paragraph*{Step~\ref{step1}.}
We use standard discrete complex analysis arguments (see, e.g.~\cite[Section~8]{DuminilCopinParafermionic}).
If the sequence $(\phi_{\beta}^{\delta})_{\delta>0}$ is uniformly bounded, 
then it has a locally uniformly convergent subsequence.
The discrete holomorphicity from Lemma~\ref{lem::holo_general} implies that any discrete contour integral of any $\phi_{\beta}^{\delta}$ in $\Omega^{\delta, \diamond}$ vanishes,
and this property is inherited by any contour integral of any subsequential limit, which thus is holomorphic in $\Omega$ by Morera's theorem.

To complete Step~\ref{step1}, it remains to show the uniform boundedness of $\phi_{\beta}^{\delta}$, 
which by Lemma~\ref{lem::holo_general} 
is equivalent to the uniform boundedness of 
$h_\delta := |\sf_\beta^\delta| / |\st_\beta^\delta|$, for all $\delta>0$.
The latter fact can be argued via contradiction: 
if $h_{\delta_n} \to \infty$ along some sequence $\delta_n\to 0$,
then, as above, $h_{\delta_n}^{-1} \circ \phi_{\beta}^{\delta_n}$
converges locally uniformly to some holomorphic function $h$. 
Now, on the one hand the limit function $h$ satisfies $\Re h=0$ on $\Omega$, so $h$ has to be a constant, while on the other hand, 
from Lemma~\ref{lem::holo_general} and the discrete Beurling estimate we see that $\Im h$ cannot be a constant. 
This is the sought contradiction.

\paragraph*{Step~\ref{step2}.}
Lemma~\ref{lem::holo_general} and the discrete Beurling estimate imply that 
$\Re\psi$ (resp.~$\Im\psi$)
can be extended continuously to $\Omega \cup \bigcup_{r=1}^{N}(x_{2r} \, x_{2r+1})$ 
(resp.~to $\Omega \cup \bigcup_{r=1}^{N}(x_{2r-1} \, x_{2r})$) 
with~\eqref{eqn::boundarydata}.

\paragraph*{Step~\ref{step3}.}
Without loss of generality, we assume that $\Omega = \HH$ and $x_1<\cdots<x_{2N}$, and identify the c.c.'s $\LC_0^\bullet, \LC_1^\bullet, \ldots, \LC_n^\bullet$ and $\LC_1^\circ, \LC_2^\circ, \ldots, \LC_m^\circ$ with connected components of $\HH^* \setminus \bigcup_{r=1}^N \gamma^\beta_r$, that is, the lower half-plane with the branch cuts 
introduced in Section~\ref{subsec:hyperelliptic}. 
Consider the differential of $g := \psi_1 - \psi_2$, where $\psi_1,\psi_2$ are two subsequential limits. Note that 
\begin{align*}
\begin{cases}
\Re g \equiv 0 \;\textnormal{on }(x_{2N} \, x_1);  \\[.3em]  
\Re g \equiv  c^\bullet_i \; \textnormal{ along each black c.c. } \LC_i^\bullet \textnormal{ in the b.c. $\beta$}, \textnormal{ for } i \in \{1,2,\ldots,n(\beta)\} ; \\[.3em]  
\Im g \equiv 0 \;\textnormal{on }(x_1 \, x_2); \textnormal{ and } \\[.3em]  
\Im g \equiv c_j^\circ \; \textnormal{ along each white c.c. } \LC_j^\circ \textnormal{ in the b.c. $\beta$} , \textnormal{ for } j \in \{1,2,\ldots,m(\beta)\} ,
\end{cases}
\end{align*}
with some constants $c^\bullet_1, \ldots, c^\bullet_{n(\beta)}$ and $c_1^\circ, \ldots, c_{m(\beta)}^\circ$. 
The Schwarz reflection principle (see, e.g.,~\cite[Chapter~4, Section~6.5]{AhlforsComplexAnalysis}) shows that $g$ extends to 
a holomorphic function 
\begin{align*}
g(z) :=
\begin{cases}
g(z) , & z \in \overline{\HH} , \\
2c^\bullet_i- \overline{g(\bar{z})} , & z\in \LC_i^\bullet , \quad i \in \{0,1,2,\ldots,n(\beta)\} ,  \\ 
2\ii c^\circ_j+ \overline{g(\bar{z})} , & z\in \LC_j^\circ , \quad j \in \{1,2,\ldots,m(\beta)\} ,
\end{cases}
\quad 
\textnormal{for all } z \in (\C \cup \{\infty\}) \setminus \bigcup_{r=1}^N \gamma^\beta_r .
\end{align*}
In particular, $\ud g$ extends to a holomorphic differential on the Riemann surface $\Sigma= \Sigma_{x_1,\ldots,x_{2N}}$ associated to the hyperelliptic curve~\eqref{eq: hyperelliptic_curve}, 
defined as $\ud g$ on $\Sigma^+ = \C \cup \{\infty\}$ and as $-\ud g$ on $\Sigma^-$.
Expanding it in the basis~\eqref{eqn::parametera_polynomial} of holomorphic differentials on $\Sigma$, 
\begin{align} \label{eqn::differential_expan}
\ud g = \; & \sum_{s=1}^{N-1} \upsilon_s \, \omega_{s-1} , \qquad 
\boldsymbol{\upsilon} = (\upsilon_1, \ldots, \upsilon_{N-1}) \in \C^{N-1} ,
\end{align}
and integrating both sides of~\eqref{eqn::differential_expan} along $\gamma^\beta_2,\ldots,\gamma^\beta_N$ respectively, 
we see that $A_\beta \boldsymbol{\upsilon}^t = \boldsymbol{0}$. Since $A_\beta$ is invertible by Lemma~\ref{lem: matrices_invertible}, we obtain 
$\upsilon_1 = \cdots = \upsilon_{N-1} = 0$, so $g$ is the constant function. Because $\Re g=0$ on $(x_{2N},x_1)$ and $\Im g=0$ on $(x_1,x_2)$, we see that $g$ equals zero.  
\end{proof}
}

The holomorphic function $\phi_{\beta}$ is, in fact, a conformal map from the polygon $(\Omega;x_1,\ldots,x_{2N})$ to a certain slit rectangle depending on $\beta$. 
In particular, for $\Omega = \HH$, the map $\phi_{\beta}$ has an explicit (degenerate) Schwarz-Christoffel formula, see Equation~\eqref{eq: Schwarz-Christoffel formula app}.
Geometrically, if $\{1,b\}, \{a,2N\} \in \beta$, then 
$\phi_{\beta}$ is the unique conformal map from $\HH$ onto a rectangle of unit width with horizontal and vertical slits such that 
$\phi_{\beta}$ maps the four points $(x_1, x_{b}, x_{a}, x_{2N})$ 
to the four corners of the rectangle so that $\phi_{\beta}(x_1)=0$, 
and $\phi_{\beta}$ maps c.c.'s of 
$\{(x_{2r} \, x_{2r+1}) \colon 1\le r < N \}$
to vertical slits and c.c.'s of 
$\{(x_{2r-1} \, x_{2r}) \colon 1 < r <  N\}$ 
to horizontal slits, according to the boundary 
data~\eqref{eqn::boundarydata}. 
See Figure~\ref{fig::slitrectangle} and~\ref{fig::8points_observable} for some illustrations.

\subsection{Expansion near a marked point}
\label{subsec::expansion}

To derive the scaling limit of the Peano curve (Theorem~\ref{thm::ust_general} in Section~\ref{subsec::ust_Peano_conv}), we need  
to analyze the asymptotics of the function $\phi_{\beta}(z)$ as $z$ approaches one of the marked points (Lemma~\ref{lem::conformal_general_expansion}).
In particular, we will explicitly relate it to the partition function $\LF_{\beta}$
(Lemma~\ref{lem::ust_general_goal_new}). 
For definiteness and without loss of generality 
(by the full M\"obius covariance 
from Proposition~\ref{prop: full Mobius covariance F}
and duality for the UST model), 
we consider the case where $\{2N-1, 2N\}\in\beta$ and we study the limit of $\phi_{\beta}(z)$ as $z \to x_1$.

\begin{lemma}\label{lem::conformal_general_expansion}
Write $\boldsymbol{x} = (x_1, \ldots, x_{2N}) \in \chamber_{2N}$ and $\boldsymbol{\dot{x}}_1 = (x_2, \ldots, x_{2N})$.
On $\Omega = \HH$, the holomorphic function $\phi_{\beta}$ with boundary data~\eqref{eqn::boundarydata} 
\textnormal{(}cf.~Proposition~\ref{prop::holo_cvg}\textnormal{)} 
has the following expansion\textnormal{:} 
\begin{align} \label{eqn::conformal_general_expansion_new}
\phi_{\beta}(z; \boldsymbol{x}) = 
\LH_{\beta}(\boldsymbol{x}) \, (z-x_1)^{1/2} 
+ \LK_{\beta}(\boldsymbol{x}) \, (z-x_1)^{3/2} 
+ o((z-x_1)^{3/2}) , \quad \textnormal{as }z\to x_1, 
\end{align}
where 
\begin{align} 
\label{eqn::conformal_general_H}
\LH_{\beta}(\boldsymbol{x}) 
= \; & \frac{2 \, (-1)^{N-1} }{\wfunc(x_1; \boldsymbol{\dot{x}}_1)}
\, \frac{\det C_{\beta} (x_1; \boldsymbol{x})}{\det A_{\beta}(\boldsymbol{x})} , \\
\label{eqn::conformal_general_K}
\LK_{\beta}(\boldsymbol{x})
= \; & \frac{(-1)^{N-1}}{\wfunc(x_1; \boldsymbol{\dot{x}}_1)} 
\frac{\det C_{\beta} (x_1; \boldsymbol{x})}{\det A_{\beta}(\boldsymbol{x})}
\bigg( \sum_{j=2}^{2N} \frac{1/3}{x_j-x_1}
+ \frac{2}{3} \, \frac{\partial_{1} \det C_{\beta} (x_1; \boldsymbol{x})}{\det C_{\beta} (x_1; \boldsymbol{x})} \bigg) ,
\end{align}
with $\wfunc(u; \boldsymbol{x})$ the meromorphic function~\eqref{eqn::def_hypercurve} on the Riemann surface 
$\Sigma = \Sigma_{x_1, \ldots, x_{2N}}$, and $A_{\beta}$ the matrix in~\eqref{eq::A_matrices}, 
and $C_{\beta}$ the $(N-1)\times(N-1)$ matrix with entries
\begin{align*} 
(C_{\beta})_{r,s}(u; \boldsymbol{x}) := 
\begin{cases}
(A_{\beta})_{r,s}(\boldsymbol{x}) 
, & 
r \in \{1,2,\ldots,N-2\}, \; s \in \{1,2,\ldots,N-1\} , \\
u^{s-1}, & r = N-1 , \; s \in \{1,2,\ldots,N-1\} .
\end{cases}
\end{align*}
\end{lemma}

Both determinants can be evaluated in terms of the Vandermonde determinant: $\det A_{\beta}(\boldsymbol{x})$ is given by the line integral analogue of~\eqref{eq: VandermondeA} (cf.~Lemma~\ref{lem: relation_of_Pcirc_and_P_matrix} 
in Appendix~\ref{app::matrices}), and 
\begin{align} \label{eq: VandermondeC}
\begin{split} 
\det C_{\beta} (u; \boldsymbol{x})
= \; & (-1)^{N} \landupint_{x_{a_{2}}}^{x_{b_{2}}} \ud u_2 
\cdots \landupint_{x_{a_{N-1}}}^{x_{b_{N-1}}} \ud u_{N-1} 
\; \frac{\Delta (\boldsymbol{\ddot{u}}_{1,N})}{\wfunc(\boldsymbol{\ddot{u}}_{1,N}; \boldsymbol{x})} \; \prod_{s=2}^{N-1} (u_{s} - u) ,
\end{split}
\end{align}
where we write 
$\boldsymbol{\ddot{u}}_{1,N} := (u_2,\ldots,u_{N-1})$, 
and $\Delta$ is defined in~\eqref{eqn::def_vander}.

\begin{proof}
Expanding $\ud \phi_{\beta}$ in the basis~\eqref{eqn::parametera_polynomial} of holomorphic differentials on $\Sigma$ 
and integrating, 
we see that the holomorphic function 
$\phi_{\beta}$ has the form
\begin{align} \label{eq: Schwarz-Christoffel formula tilde}
\phi_{\beta}(z; \boldsymbol{x}) 
= \landupint_{x_1}^z \frac{Q_{\beta}(u) \, \ud u}{\wfunc(u; \boldsymbol{x})} , \qquad z \in \overline{\HH} ,
\end{align} 
uniquely\footnote{Note that $Q_{\beta}$ is a polynomial of degree at most $N-2$ (in fact, exactly $N-2$ by~\eqref{eq: Schwarz-Christoffel formula app} in Proposition~\ref{prop: SC map}).} 
determined by the boundary data~\eqref{eqn::boundarydata}:
\begin{align} 
\label{eq: bdrydata_for_SC-1}
\landupint_{x_1}^{x_{b_1}} \frac{Q_{\beta}(u) \, \ud u}{\wfunc(u; \boldsymbol{x})} = \; & 1 = - \landupint_{x_{2N-1}}^{x_{2N}} \frac{Q_{\beta}(u) \, \ud u}{\wfunc(u; \boldsymbol{x})}  , \\
\label{eq: bdrydata_for_SC-2}
\landupint_{x_{a_{r+1}}}^{x_{b_{r+1}}} \frac{Q_{\beta}(u) \, \ud u}{\wfunc(u; \boldsymbol{x})} = \; & 0 , \qquad r \in \{1,2,\ldots,N-2\} . 
\end{align}
Indeed, as the matrix $A_{\beta}$ is invertible by Lemma~\ref{lem: matrices_invertible}, 
these equations have a unique solution 
$\boldsymbol{\nu} = (\nu_0,\nu_1,\ldots,\nu_{N-2})$, 
which determine the coefficients of $Q_{\beta} = \sum_{s=0}^{N-2} \nu_s u^s$:
by~(\ref{eq: bdrydata_for_SC-1},~\ref{eq: bdrydata_for_SC-2}), we have
\begin{align*}
A_{\beta} \boldsymbol{\nu}^t = (0,0,\ldots,0,0,-1)^t .
\end{align*}
By Cramer's rule, 
we have $(A_{\beta})^{-1} = \frac{1}{\det A_{\beta}} \, \adj{A_{\beta}}$, where $\adj{A_{\beta}}$ is the adjugate matrix of $A_{\beta}$. Hence,
we obtain from Cramer's rule the relation
\begin{align*} 
Q_{\beta}(u; \boldsymbol{x}) 
= (1, u, \ldots, u^{N-2}) \, \boldsymbol{\nu}^t 
= \; & (1, u, \ldots, u^{N-2}) \,
\frac{\adj{A_{\beta}(\boldsymbol{x})}}{\det A_{\beta}(\boldsymbol{x})} \, (0,0,\ldots,0,0,-1)^t \\
= \; & (-1)^{N-1} \frac{\det C_{\beta}(u; \boldsymbol{x}) }{\det A_{\beta}(\boldsymbol{x})} 
\end{align*}
where $\boldsymbol{\nu} = (\nu_0,\nu_1,\ldots,\nu_{N-2})$ 
are the coefficients of $Q_{\beta} = \sum_{s=0}^{N-2} \nu_s u^s$. 
Now, we obtain the desired expansion~\eqref{eqn::conformal_general_expansion_new} by a direct computation. 
First, from~(\ref{eq: VandermondeC},~\ref{eq: Schwarz-Christoffel formula tilde}) we have
\begin{align} \label{eq: observable_limit_detA-DetC} 
\phi_{\beta}(z) 
= \; & \frac{-1}{\det A_{\beta}(\boldsymbol{x})} 
\landupint_{x_1}^z \ud u_1 
\landupint_{x_{a_{2}}}^{x_{b_{2}}} \ud u_2 
\cdots \landupint_{x_{a_{N-1}}}^{x_{b_{N-1}}} \ud u_{N-1} 
\; \frac{\Delta (\boldsymbol{\dot{u}}_{N})}{\wfunc(\boldsymbol{\dot{u}}_{N}; \boldsymbol{x})} ,
\end{align}
writing $\boldsymbol{\dot{u}}_{N} := (u_1,u_2,\ldots,u_{N-1})$.  
Next, making the change of variables
$v = \frac{u_1 - x_1}{z - x_1}$, we obtain 
\begin{align*} 
\phi_{\beta}(z) 
= \; & \frac{-(z - x_1)^{1/2}}{\det A_{\beta}(\boldsymbol{x})} 
 \landupint_0^1 v^{-1/2} \ud v 
\landupint_{x_{a_{2}}}^{x_{b_{2}}} \ud u_2 
\cdots \landupint_{x_{a_{N-1}}}^{x_{b_{N-1}}} \ud u_{N-1}  
\frac{\Delta (\boldsymbol{\ddot{u}}_{1,N})}{\wfunc(\boldsymbol{\ddot{u}}_{1,N}; \boldsymbol{x})}
\\
\; & \qquad \qquad \qquad \qquad \qquad \times
\; \frac{1}{\wfunc((z - x_1) v + x_1; \boldsymbol{\dot{x}}_1)} 
\; \prod_{s=2}^{N-1} (u_{s} - (z - x_1) v - x_1).
\end{align*}
Now, we have
\begin{align*}
\frac{1}{\wfunc((z - x_1) v + x_1; \boldsymbol{\dot{x}}_1)}
= \; & \frac{1}{\wfunc(x_1; \boldsymbol{\dot{x}}_1)}
\bigg( 1 + \frac{v}{2} \sum_{j=2}^{2N} \frac{z - x_1}{x_j-x_1}
+ o(z - x_1) \bigg) , \\
\prod_{s=2}^{N-1} (u_{s} - (z - x_1) v - x_1)
=  \; & 
\bigg( \prod_{s=2}^{N-1} (u_{s} - x_1)
\bigg) 
\bigg( 1 + v \sum_{s=2}^{N-1} \frac{z - x_1}{u_{s} - x_1} + o(z - x_1) \bigg) .
\end{align*}
Collecting the terms according to different powers of $(z - x_1)$, we obtain~\eqref{eqn::conformal_general_expansion_new}.
\end{proof}

\begin{lemma}\label{lem::ust_general_goal_new}
We have
\begin{align} \label{eqn::ust_polygon_goal_PDE_general_new}
\partial_1\log \LF_{\beta}(\boldsymbol{x}) 
= \frac{3\LK_{\beta}(\boldsymbol{x})-2\partial_1\LH_{\beta}(\boldsymbol{x})}{2\LH_{\beta}(\boldsymbol{x})},  \qquad \boldsymbol{x} \in \chamber_{2N} ,
\end{align}
where $\LF_{\beta}$ is defined in~\eqref{eq: Def of F beta}, 
and $\LH_{\beta}$ and $\LK_{\beta}$ are defined in~\eqref{eqn::conformal_general_H} and~\eqref{eqn::conformal_general_K}. 
\end{lemma}

\begin{proof}
By Proposition~\ref{prop::two_Fs_as_determinants}, we have
\begin{align*}
\partial_1\log \LF_{\beta}(\boldsymbol{x}) 
= \frac{\partial_1\det{A_{\beta}}(\boldsymbol{x})}{\det{A_{\beta}}(\boldsymbol{x})} 
- \frac{1}{4}\sum_{j=2}^{2N}\frac{1}{x_j-x_1} .
\end{align*}
Note that
\begin{align*}
\frac{3\LK_{\beta}(\boldsymbol{x})}{2\LH_{\beta}(\boldsymbol{x})} 
= \; & \frac{1}{4}\sum_{j=2}^{2N}\frac{1}{x_j-x_1} 
+ \frac{\partial_{1} \det C_{\beta} (x_1; \boldsymbol{x})}{\det C_{\beta} (x_1; \boldsymbol{x})} ,
\end{align*}
and
\begin{align*}
\frac{\partial_1\LH_\beta(\boldsymbol{x})}{\LH_\beta(\boldsymbol{x})}
= \; & \frac{\partial_{1} \det C_{\beta} (x_1; \boldsymbol{x})}{\det C_{\beta} (x_1; \boldsymbol{x})}
- \frac{\partial_{1} \det A_{\beta}(\boldsymbol{x})}{\det A_{\beta}(\boldsymbol{x})}
+ \frac{1}{2}\sum_{j=2}^{2N}\frac{1}{x_j-x_1} .
\end{align*}
This proves~\eqref{eqn::ust_polygon_goal_PDE_general_new}. 
\end{proof}

\subsection{Scaling limits of Peano curves --- proof of Theorem~\ref{thm::ust_general}}
\label{subsec::ust_Peano_conv}

We will next complete the proof of Theorem~\ref{thm::ust_general} by deriving the limit of the law of the Peano curve $\eta_1^{\delta}$ in the case where $\{1, 2N\}\not\in\beta$ (recall that the other cases can be treated via duality and rotation symmetry). 

\subsubsection*{Loewner chains}
Let us first collect some preliminaries on Loewner chains ---
see~\cite{LawlerConformallyInvariantProcesses} for background. 
Suppose that a continuous function $W_t \colon [0,\infty) \to \R$, 
called the \emph{driving function}, is given.
Consider solutions $(g_{t}, t\ge 0)$ to the \emph{Loewner equation} 
\begin{align} \label{eqn:LE}
\partial_{t}{g}_{t}(z) 
= \frac{2}{g_{t}(z)-W_{t}} 
\qquad \textnormal{with initial condition} \qquad g_{0}(z)=z .
\end{align}
For each $z \in \HH$, the ordinary differential equation~\eqref{eqn:LE} has a unique solution up to 
\begin{align*}
T_{z} := \sup\{t\ge 0 \colon \min_{s\in[0,t]} |g_{s}(z)-W_{s}|>0\} ,
\end{align*}
the \emph{swallowing time} of $z$. 
As a function of $z$, each map $g_t$ is a well-defined conformal transformation
from $H_{t}:=\HH\setminus K_{t}$ onto $\HH$, where the hull of swallowed points is $K_{t}:=\overline{\{z\in\HH \colon T_{z}\le t\}}$.
The collection $(K_{t}, t\ge 0)$ of hulls growing in time is called a \emph{Loewner chain}.
In fact, $g_t \colon H_{t} \to \HH$ is the unique conformal map 
such that $|g_K(z)-z| \to 0$ as $z \to \infty$.
The \emph{half-plane capacity} $\mathrm{hcap}(K_{t})$ of the hull $K_{t}$ is defined as the coefficient of $z^{-1}$ in the series expansion of $g_t$ at infinity, and~\eqref{eqn:LE} 
implies that $\mathrm{hcap}(K_{t}) = 2 t$. 
We say that the process $(K_t, t\ge 0)$ is \emph{parameterized by the half-plane capacity}.

\subsubsection*{SLE processes}
Chordal \emph{Schramm-Loewner evolution}, $\SLE_{\kappa}$, 
is the random Loewner chain driven by $W = \sqrt{\kappa} \, B$,
a standard one-dimensional Brownian motion $B$ of speed $\kappa \geq 0$. 
See~\cite{LawlerConformallyInvariantProcesses, RohdeSchrammSLEBasicProperty} for background and further properties of this process. In this article, we assume that $\kappa=8$.

Recall that a \emph{partition function} (with $\kappa=8$) refers to a positive smooth function $\PartF \colon \chamber_{2N} \to\R_{>0}$ satisfying the PDE system~\eqref{eqn::USTPDE} and M\"obius covariance~\eqref{eqn::USTCOV}. 
We can use any partition function to define a 
\emph{Loewner chain associated to} $\PartF$: in the upper half-plane $\HH$, started from $x_i \in \R$, 
and with marked points $(x_1, \ldots, x_{i-1}, x_{i+1}, \ldots, x_{2N})$,
that is, the Loewner chain driven by 
the solution $W$ to the SDEs~\eqref{eqn::driving_general}. 
This process is well-defined up to the first time when either $x_{i-1}$ or $x_{i+1}$ is swallowed,  
and each $V_t^j = g_t(x_j)$ is the time-evolution of $x_j$ for times $t$ smaller than the swallowing time of~$x_j$.

\smallbreak

Now are ready to complete the proof of Theorem~\ref{thm::ust_general}. 
The key is to identify the driving process of the scaling limit curve as the solution to the SDEs~\eqref{eqn::driving_general} with $\PartF = \LF_\beta$ and $i=1$.

\begin{proof}[Proof of Theorem~\ref{thm::ust_general}]
By assumption, the medial polygons $(\Omega^{\delta, \diamond}; x_1^{\delta, \diamond}, \ldots, x_{2N}^{\delta, \diamond})$ converge to the polygon $(\Omega; x_1, \ldots, x_{2N})$ in the sense of Equation~\eqref{eqn::polygon_cvg}, 
so they also converge in the Carath\'{e}odory sense: there exist conformal maps $\varphi_{\delta} \colon \Omega^{\delta, \diamond} \to \HH$ and $\varphi \colon \Omega \to \HH$
such that $\varphi(x_1)<\cdots<\varphi(x_{2N})$ and,
as $\delta \to 0$, the maps $\varphi_{\delta}^{-1}$ converge to $\varphi^{-1}$ locally uniformly on $\HH$, and $\smash{\varphi_{\delta}(x_j^{\delta, \diamond}) \to \varphi(x_j)}$ for all $1\le j\le 2N$.

As before, consider the UST on the primal polygon $(\Omega^{\delta}; x_1^{\delta}, \ldots, x_{2N}^{\delta})$ 
with boundary condition $\beta$ such that $\{1,2N\}\not\in\beta$, 
and let $\eta_1^{\delta}$ be the Peano curve started from $\smash{x_{1}^{\delta, \diamond}}$. 
Denote by $\tilde{\eta}_1^{\delta} := \varphi_{\delta}(\eta_1^{\delta})$ its conformal image parameterized by half-plane capacity. 
By Lemma~\ref{lem::Peanocurve_tight}, we may choose a subsequence $\delta_{n}\to 0$ such that $\eta^{\delta_{n}}_1$ converges weakly in the 
space~\eqref{eqn::metric_curvesspace} 
as $n\to\infty$. 
We denote the limit by $\eta_1$, define $\tilde{\eta}_1:=\varphi(\eta_1)$ and parameterize it also by half-plane capacity. 
By Lemma~\ref{lem::Peanocurve_tight} and a similar argument as in~\cite[Corollary~4.11]{HanLiuWuUST}, the family $\{\tilde{\eta}^{\delta_{n}}_1|_{[0,t]} \colon [0,t] \to \overline{\HH} \}_{n \geq 1}$ is 
precompact in the uniform topology of curves parameterized by half-plane capacity. 
Thus, using the diagonal method and the Skorohod representation theorem, we may choose a subsequence, 
still denoted by $\delta_n$, such that $\tilde{\eta}^{\delta_{n}}_1$ converges to $\tilde{\eta}_1$ locally uniformly as $n\to\infty$, almost surely.

Next, we define $\tau^{\delta_n}$ to be the first time when $\eta_1^{\delta_n}$ hits $(x_2^{\delta_n} \, x_{2N}^{\delta_n})$ 
and $\tau$ the first time when $\eta_1$ hits $(x_2 \, x_{2N})$. 
By properly adjusting the coupling (as in~\cite[Proof of Theorem~4.2, Equation~(4.14)]{HanLiuWuUST};
see also~\cite{KarrilaUSTBranches, GarbanWuFKIsing}), 
we may assume that $\underset{n \to \infty}{\lim} \tau^{\delta_n} = \tau$ almost surely.

Now, denote by $(W_t, t\ge 0)$ the driving function of $\tilde{\eta}_1$ and by $(g_t, t\ge 0)$ the corresponding conformal maps. 
Write $V_t^j := g_t(\varphi(x_j))$ for $j\in\{2, \ldots, 2N\}$. 
Via a standard argument (see, e.g.,~\cite[Lemma~4.8 and Lemma~4.9]{HanLiuWuUST}), 
we derive from the holomorphic observable $\phi_{\beta}(z;\boldsymbol{x})$ of Proposition~\ref{prop::holo_cvg}
the local martingale 
\begin{align} \label{eqn::ust_polygon_mart}
M_t = M_t(z;\boldsymbol{x}) 
:= \phi_{\beta}(g_t(z); W_t, V_t^2, \ldots, V_t^{2N}) , \qquad t < \tau .
\end{align}
It remains to argue that $(W_t, t\ge 0)$ is a semimartingale and find the SDE for it. 
This step is also standard, see, e.g.,~\cite[Proof of Lemma~4.12]{HanLiuWuUST} 
(a similar argument also appeared in~\cite[Lemma~5.3]{KarrilaUSTBranches}). 
For any $w<y_2<\cdots<y_{2N}$, the function $\partial_w \phi_\beta(\cdot;w,y_2,\ldots,y_{2N})$ 
is holomorphic and not identically zero, so its zeros are isolated. 
Pick $z\in\HH$ with $|z|$ large enough such that $\partial_w\phi_\beta(z;w,y_2,\ldots,y_{2N})\neq 0$.
By the implicit function theorem, $w$ is locally a smooth function of $(\phi_\beta,z,y_2,\ldots,y_{2N})$. 
Thus, by continuity, each time $t < \tau$ has a neighborhood $I_t$ for which
we can choose a deterministic $z$ such that $W_s$ is locally a smooth function of 
$(M_s(z),g_s(z),g_s(y_2),\ldots,g_s(y_{2N}))$ for $s \in I_t$.
This implies that $(W_t, t\ge 0)$ is a semimartingale. 
To find the SDE for it, let $D_t$ denote the drift term of $W_t$. 
By a computation using It\^o's formula, we find from~\eqref{eqn::ust_polygon_mart} and using the Loewner equation~\eqref{eqn:LE}  the identities
\begin{align*}
\ud M_t 
= (\partial_z\phi_{\beta}) \frac{2 \, \ud t}{g_t(z)-W_t} 
+ (\partial_1 \phi_{\beta}) \, \ud W_t 
+ \sum_{j=2}^{2N} (\partial_{x_j} \phi_{\beta}) \frac{2 \, \ud t}{V_t^j-W_t} 
+ \frac{1}{2} (\partial^2_{x_1} \phi_{\beta}) \, \ud \langle W\rangle_t . 
\end{align*}
Combining this with the explicit relation~\eqref{eqn::conformal_general_expansion_new} from Lemma~\ref{lem::conformal_general_expansion}, we find the expansion 
\begin{align*}
\ud M_t 
= & \;\;(g_t(z)-W_t)^{-3/2} \, 
\LH_{\beta} \big( \ud t - \tfrac{1}{8}\ud \langle W\rangle_t \big) \\
& \; + (g_t(z)-W_t)^{-1/2} \, 
\Big( 3\LK_{\beta} \, \ud t - \tfrac{1}{2}\LH_{\beta} \,\ud W_t 
+ \big( \tfrac{3}{8}\LK_{\beta} - \tfrac{1}{2}\partial_1 \LH_{\beta}\big) \ud \langle W \rangle_t \Big) \\
& \; + o(g_t(z)-W_t)^{-1/2} ,\qquad\textnormal{as }z\to \tilde{\eta}_1(t). 
\end{align*}
As the drift term of $M_t$ has to vanish, we conclude that 
\begin{align}
\begin{split}
\label{eqn::ust_polygon_aux}
\ud \langle W\rangle_t = \; & 8  \, \ud t 
\qquad  \textnormal{and} \qquad
3\LK_{\beta} \, \ud t - \tfrac{1}{2}\LH_{\beta} \, \ud D_t + \big( \tfrac{3}{8}\LK_{\beta} -\tfrac{1}{2}\partial_1 \LH_{\beta} \big)\ud \langle W\rangle_t = 0 \\
\qquad \Longrightarrow \qquad
\ud \langle W\rangle_t = \; & 8 \, \ud t
\qquad  \textnormal{and} \qquad
\ud D_t = \frac{12\LK_{\beta} - 8\partial_1 \LH_{\beta}}{\LH_{\beta}} \, \ud t . 
\end{split}
\end{align}
Recall now that the goal is to derive for the driving process $W$ the SDE
\begin{align} \label{eqn::ust_polygon_goal}
\ud W_t = \sqrt{8} \, \ud B_t + 8 (\partial_1 \log\LF_{\beta})(W_t, V_t^2, \ldots, V_t^{2N})\, \ud t , \qquad t < \tau . 
\end{align}
Indeed, by comparing~\eqref{eqn::ust_polygon_aux} and~\eqref{eqn::ust_polygon_goal} 
with Lemma~\ref{lem::ust_general_goal_new}, we see that the driving function $W$ of  the curve $\tilde{\eta}_1$ 
satisfies~\eqref{eqn::ust_polygon_goal} up to the stopping time $\tau$. 
This completes the proof. 
\end{proof}
\subsection{Consequences}
\label{subsec::consequences}

We conclude with another proof for the PDE system~\eqref{eqn::USTCOV} using the convergence result from Theorem~\ref{thm::ust_general} (see Corollary~\ref{cor::coulombgasintegralPDE}), 
and then comment briefly on the relation of our work to that of Dub\'edat~\cite{DubedatCommutationSLE, DubedatEulerIntegralsCommutingSLEs}.

\begin{corollary} \label{cor::coulombgasintegralPDE}
For each $\beta \in \LP_N$, the function $\LF_{\beta}$ satisfies the PDE system~\eqref{eqn::USTPDE}.
\end{corollary}

Note that we already know that the function $\LF_{\beta}$ is smooth, by its explicit formula~\eqref{eqn::coulombgasintegral}.
\begin{proof}
The PDEs follow from the commutation relations for SLEs~\cite{DubedatCommutationSLE}.
Take any disjoint localization neighborhoods $U_1, \ldots, U_{2N} \subset \overline{\Omega}$ of the marked points (cf.~\cite[Appendix~A]{KytolaPeltolaPurePartitionFunctions}), 
and approximate them by $U_i^{\delta, \diamond}$ on the medial lattice.
For each $i$, let $\eta_i^{\delta}$ be the Peano curve started from $x_i^{\delta, \diamond}$ and stopped when it exits $U_i^{\delta, \diamond}$. 
Thanks to the precompactness (Lemma~\ref{lem::Peanocurve_tight}), 
we find a sequence $\delta_n \to 0$ such that the collection $\{\eta_i^{\delta_n} \colon 1\le i\le 2N \}$ converges weakly to 
some collection $\{\eta_i \colon 1\le i \le 2N \}$ of curves on $\Omega$, with 
law $\mathrm{P}^{(\Omega; x_1, \ldots, x_{2N})}_{(U_1, \ldots, U_{2N})}$.  
It follows that this family of probability measures has the following properties.
\begin{itemize}[leftmargin=2em]
\item \emph{Conformal invariance.} 
For any conformal map $\varphi \colon \Omega \to \varphi(\Omega)$, the law of the curve collection $\{\varphi(\eta_i) \colon 1\le i\le 2N \}$
is the same as $\mathrm{P}^{(\varphi(\Omega); \varphi(x_1), \ldots, \varphi(x_{2N}))}_{(\varphi(U_1), \ldots, \varphi(U_{2N}))}$. 

\smallbreak

\item \emph{Domain Markov property.} 
Given initial segments $\{\eta_i(t) \colon 1\le i\le 2N , \; 0\le t\le \tau_i \}$ up to some stopping times $\tau_i$,
the remaining parts 
$\{\eta_i(t) \colon 1\le i\le 2N , \; t \geq \tau_i \}$
have the law $\mathrm{P}^{(\Omega'; x_1', \ldots, x_{2N}')}_{(U_1', \ldots, U_{2N}')}$, 
where 
\begin{align*}
\Omega'=\Omega\setminus\bigcup_{i=1}^{2N}\eta_i[0,\tau_i] , 
\qquad \textnormal{and} \qquad
x_i'=\eta_i(\tau_i)
\quad \textnormal{and} \quad
U_i'=U_i\cap \Omega'
\textnormal{ for each }
i \in \{1,2,\ldots,2N\} .
\end{align*}

\smallbreak

\item For each $i$, the marginal law of $\eta_i$ is the same as the law of the Loewner chain 
with driving function~\eqref{eqn::ust_polygon_driving_general} started from $x_i$ and stopped when it exits $U_i$. 
\end{itemize}
It thus follows from~\cite[Theorem~7]{DubedatCommutationSLE} 
that $\LF_{\beta}$ satisfies the asserted PDEs~\eqref{eqn::USTPDE}.
\end{proof}

\begin{remark}
With $\beta=\unnested$ in Theorem~\ref{thm::ust_general}, 
the curve $\eta_i^{\delta}$ converges to the image under $\varphi^{-1} \colon \HH \to \Omega$
of the Loewner chain associated to the partition function $\LF_{\unnested}$. 
In contrast, Dub\'edat argued 
in~\textnormal{\cite[Section~3.3]{DubedatEulerIntegralsCommutingSLEs}}
the same conclusion but with 
$\LF_{\unnested}$ replaced by 
\begin{align*} 
\int_{x_1}^{x_2}\ud u_1\cdots\int_{x_{2N-3}}^{x_{2N-2}}\ud u_{N-1} \, f(\boldsymbol{x}; \boldsymbol{\dot{u}}_N) , 
\qquad \boldsymbol{\dot{u}}_N := (u_1, \ldots, u_{N-1}) .
\end{align*}
We see from Proposition~\ref{prop::two_Fs_as_determinants} 
that our result is consistent with~\textnormal{\cite[Section~3.3]{DubedatEulerIntegralsCommutingSLEs}}. 
\end{remark}


\smallskip{}

\section{Crossing probabilities, groves, and pure partition functions}
\label{sec::ppf}

In this section, we prove the explicit formulas for the scaling limits of crossing probabilities of the UST Peano curves (Theorem~\ref{thm::ust_crossing_proba} in Section~\ref{subsec::ust_crossingproba}). 
In the cases $N =1$ and $N=2$, the crossing probability formulas are evident, as they are just given by the meander entries $\LM_{\alpha,\beta}$ already in the discrete model.
Explicitly, we have
\begin{align*}
\PartF_{\vcenter{\hbox{\includegraphics[scale=0.2]{figures/link-0.pdf}}}} =\LF_{\vcenter{\hbox{\includegraphics[scale=0.2]{figures/link-0.pdf}}}} 
\quad \textnormal{when } N=1 ; \qquad \textnormal{and}\qquad 
\begin{cases}
\PartF_{\vcenter{\hbox{\includegraphics[scale=0.2]{figures/link-1.pdf}}}}=\LF_{\vcenter{\hbox{\includegraphics[scale=0.2]{figures/link-2.pdf}}}} ,\\ 
\PartF_{\vcenter{\hbox{\includegraphics[scale=0.2]{figures/link-2.pdf}}}}=\LF_{\vcenter{\hbox{\includegraphics[scale=0.2]{figures/link-1.pdf}}}}
\end{cases}
\quad \textnormal{when } N=2.
\end{align*}

The probabilities in Theorem~\ref{thm::ust_crossing_proba} become interesting when $N\ge 3$, still retaining some combinatorial structure.
Indeed, Kenyon and Wilson~\cite{KenyonWilsonBoundaryPartitionsTreesDimers} provided a systematic method for calculating the discrete crossing probabilities for groves, including spanning trees. 

\subsubsection*{Groves}
Consider the discrete polygon $(\Omega^{\delta}; x_1^{\delta}, \ldots, x_{2N}^{\delta})$ whose each boundary arc $(x_{2r-1}^{\delta} \, x_{2r}^{\delta})$ with $1\le r\le N$ is wired and these $N$ wired arcs are isolated outside of $\Omega^{\delta}$. 
Following~\cite{KenyonWilsonBoundaryPartitionsTreesDimers},
we define a \emph{grove} to be a spanning forest of $\Omega^{\delta}$ such that each of its component trees contains at least one wired arc. 
Note that every grove induces a non-crossing partition on the $N$ wired arcs, 
which we view as a non-crossing partition $\pi$ of $\{1,2,\ldots, N\}$.
Every grove also induces $N$ non-intersecting Peano curves connecting the points $\{x_1^{\delta, \diamond}, \ldots, x_{2N}^{\delta, \diamond}\}$ pairwise. 
The endpoints of these $N$ Peano curves form a planar link pattern $\alpha \in \LP_N$, 
and these two are in bijection 
(via the correspondence illustrated in Figure~\ref{fig::8pointsE_meander}):
we write $\alpha \leftrightarrow \pi(\alpha)$.

Crossing probabilities of the UST Peano curves can be expressed in terms of more general crossing probabilities for groves. 
We consider a random grove $\LG$ sampled from the uniform distribution $\rm{Pr}^{\delta}$ of all groves on $\Omega^{\delta}$ and denote the random induced non-crossing partition by $\grconn^{\delta}$.
Then, 
\cite[Theorem~1.2]{KenyonWilsonBoundaryPartitionsTreesDimers} gives the following probabilities: 
\begin{align} \label{eq: grove crossing general}
\frac{\rm{Pr}^\delta[\grconn^{\delta}=\pi]}{\rm{Pr}^\delta[\LG \textnormal{ has $N$ components}]} =\sum_{\varpi}P_{\pi,\varpi} \, L^\delta_\varpi 
\quad \quad \textnormal{for any non-crossing partition $\pi$ of $\{1,2,\ldots, N\}$},
\end{align}
where the sum is taken over all (possibly crossing) partitions $\varpi$ of $\{1,2, \ldots, N\}$, 
the factor $P_{\pi,\varpi}$ is a constant which only depends on $\pi$ and $\varpi$, and the combinatorial factor $L^\delta_\varpi$ is 
\begin{align*}
L^\delta_\varpi := \sum_{\forest(\varpi)} \prod_{\langle n, r\rangle \in \forest(\varpi)} L^\delta_{n,r} ,
\end{align*}
where the sum is taken over all those spanning forests $\forest(\varpi)$ of the $N$-complete graph whose trees induce the partition $\varpi$, 
the product over edges $\langle n, r\rangle$ in the forest $\forest(\varpi)$, 
and the summands are currents $L^\delta_{n,r}$ between the vertices $n$ and $r$, 
that is, negatives of the entries of the Dirichlet-to-Neumann matrix of the $N$-complete graph (see~\cite[Appendix~A]{KenyonWilsonBoundaryPartitionsTreesDimers}). 
Concretely, $L^\delta_{n,r}$ can be written in terms of differences of boundary values of discrete holomorphic functions as follows.
For each $n \in \{1,2,\ldots,N\}$, let 
$\phi^{\delta}_{n}$ 
be the discrete holomorphic function with boundary data\footnote{Throughout, we use the cyclic indexing convention $x_{2N+1}^{\delta} := x_{1}^{\delta}$ etc.} 
(it is not hard to see that $\phi^{\delta}_{n}$ is uniquely determined)
\begin{align} \label{eqn::boundarydata_phin_new_discrete}
\begin{cases}
\Re \phi_{n}^{\delta} \equiv 1 \; \textnormal{on } (x_{2n-1}^{\delta} \, x_{2n}^{\delta}) ; \\[.3em]  
\Re \phi_{n}^{\delta} \equiv 0 \; \textnormal{on the other arcs } (x_{2r-1}^{\delta} \, x_{2r}^{\delta}) , \textnormal{ for } r \neq n ; \\[.3em]   
\Im \phi_{n}^{\delta} \equiv 0 \;\textnormal{on }(x_{2n-2}^{\delta, *} \, x_{2n-1}^{\delta, *}); \textnormal{ and } \\[.3em]  
\Im \phi_{n}^{\delta} \; \textnormal{ is a constant on the other arcs }
(x_{2r}^{\delta, *} \, x_{2r+1}^{\delta, *}) , \textnormal{ for }  
r \neq n-1 .
\end{cases}
\end{align}
Write
$\Im \phi_{n}^{\delta} \equiv c_{n,r}^\delta$ for the constant values on the arcs 
$(x_{2r}^{\delta, *} \, x_{2r+1}^{\delta, *})$, so $c_{n,n-1}^\delta=0$. 
Then, 
\begin{align*}
L^\delta_{n,r} 
= \; & c_{n,r}^\delta - c_{n,r-1}^\delta .
\end{align*}
Similarly as in the proof of Proposition~\ref{prop::holo_cvg}, 
we see that if 
the primal polygons $(\Omega^{\delta}; x_1^{\delta}, \ldots, x_{2N}^{\delta})$ converge to a polygon $(\Omega; x_1, \ldots, x_{2N})$ in the Carath\'{e}odory sense as $\delta\to 0$, 
then $\phi^{\delta}_{n}$ converges locally uniformly as $\delta\to 0$ to the unique holomorphic function $\phi_{n}$ on $\Omega$ with boundary data analogous to~\eqref{eqn::boundarydata_phin_new_discrete}. 
Note that $\phi_{n}$ is the unique conformal map from $\Omega$ onto the rectangle of unit width with horizontal slits such that $\phi_{n}$ maps $(x_{2n-2}, x_{2n-1}, x_{2n}, x_{2n+1})$ to the four corners of the rectangle with $\phi_{n}(x_{2n-2}) = 0$, 
and it maps each $(x_{2r} \, x_{2r+1})$ to horizontal slits, for $r \in \{n+1, n+2, \ldots, N, N+1, \ldots, N+n-2\}$.

\smallbreak

From these observables we readily obtain the convergence of the crossing probabilities of the UST Peano curves (Proposition~\ref{prop::proba_limit} in Section~\ref{subsec::ust_crossingproba}).
In Section~\ref{subsec::ppf_asy}, we derive the asymptotic properties~(\ref{eqn::ppf_asy1},~\ref{eqn::ppf_asy2}) of the pure partition functions $\PartF_\alpha$, thereby verifying a part \textnormal{(ASY)} of Theorem~\ref{thm::ppf}. 
We complete the proof of Theorems~\ref{thm::coulombgasintegral} and~\ref{thm::ppf} in Section~\ref{subsec::ppf_concluding}.

\subsection{Crossing probabilities --- proof of Theorem~\ref{thm::ust_crossing_proba}}
\label{subsec::ust_crossingproba}

Let us now consider the $N$ UST Peano curves. 
For each $\delta > 0$, the endpoints of these curves
give rise to a random planar link pattern $\conn^{\delta}$ in $\LP_N$. 
From Equation~\eqref{eq: grove crossing general}, we obtain a formula for the crossing probabilities: 
\begin{align*}
\PP_{\beta}^{\delta}[\conn^{\delta}=\alpha] 
\; =  \;\; & \LM_{\alpha, \beta}\; \frac{\rm{Pr}^\delta[\grconn^{\delta}=\pi(\alpha)]}{\sum_{\gamma} \LM_{\gamma,\beta} \, \rm{Pr}^\delta[\grconn^{\delta}=\pi(\gamma)]} 
\; = \; \LM_{\alpha, \beta} \; \frac{\sum_{\varpi}P_{\pi(\alpha),\varpi} \, L^\delta_\varpi}{\sum_\varpi \LE_{\beta,\varpi} \, L^\delta_\varpi} , \\[.5em]
\textnormal{where} \quad
\LE_{\beta, \varpi} = \; & \sum_{\gamma}\LM_{\gamma,\beta} \, P_{\pi(\gamma),\varpi} ,
\end{align*}
and where the sum $\sum_{\varpi}$ is taken over all partitions $\varpi$ of $\{1,2, \ldots, N\}$ and the sum $\sum_{\gamma}$ is taken over all planar link patterns $\gamma \in \LP_N$. 
The results of~\cite{KenyonWilsonBoundaryPartitionsTreesDimers} imply that $\LE$ is a matrix whose elements equal $0$ or $1$, 
and each row of $\LE$ contains at least one non-zero element 
(see more details in~\cite[Section~2.3]{KenyonWilsonBoundaryPartitionsTreesDimers}).
In particular, we readily obtain that the UST crossing probabilities 
in Theorem~\ref{thm::ust_crossing_proba} have conformally invariant scaling limits. 
We emphasize that, for this 
we only require the convergence of polygons in the Carath\'{e}odory sense, 
that is weaker than~\eqref{eqn::polygon_cvg}.
Also, we do not require any additional regularity of $\partial\Omega$ here ---
we only need that $\partial\Omega$ is locally connected.

\begin{proposition} \label{prop::proba_limit}
Fix $N\ge 1$ and a polygon $(\Omega; x_1, \ldots, x_{2N})$. 
Suppose that a sequence $(\Omega^{\delta}; x_1^{\delta}, \ldots, x_{2N}^{\delta})$ of primal polygons converges to $(\Omega; x_1, \ldots, x_{2N})$ 
in the Carath\'{e}odory sense as $\delta\to 0$. 
For any $\alpha, \beta\in\LP_N$ with $\LM_{\alpha, \beta}=1$, the following limit exists and is positive: 
\begin{align} \label{eqn::proba_cvg}
p_{\beta}^{\alpha} =
p_{\beta}^{\alpha}(\Omega; x_1, \ldots, x_{2N}) 
:= \lim_{\delta\to 0} \PP_{\beta}^{\delta}[\conn^{\delta}=\alpha] \; > \; 0 .
\end{align}
Furthermore, the limit 
satisfies the following properties. 
\begin{enumerate}
\item \textnormal{\bf Conformal invariance:} 
\label{item:cross_proba_CI}
For any conformal map $\varphi$ on $\Omega$,~we~have 
\begin{align}\label{eqn::proba_confinv}
p_{\beta}^{\alpha}(\varphi(\Omega); \varphi(x_1), \ldots, \varphi(x_{2N})) 
= p_{\beta}^{\alpha}(\Omega; x_1, \ldots, x_{2N}). 
\end{align}

\smallbreak

\item \textnormal{\bf Smoothness:} 
\label{item:cross_proba_smooth}
$p_{\beta}^{\alpha}(x_1, \ldots, x_{2N}) := p_{\beta}^{\alpha}(\HH; x_1, \ldots, x_{2N})$ 
is a smooth function on $\chamber_{2N}$. 
\end{enumerate}
\end{proposition}

\begin{proof}
Since $L^{\delta}_{n,r} \to L_{n,r}>0$ and $L_{\varpi}^{\delta} \to L_{\varpi}>0$ as $\delta\to 0$, 
and both $L_{n,r}$ and $L_{\varpi}$ are conformally invariant functions, and continuous in $(x_1, \ldots, x_{2N})$, we obtain~\eqref{eqn::proba_cvg}: 
\begin{align}\label{eqn::KW_limit}
p_{\beta}^{\alpha}(\Omega; x_1, \ldots, x_{2N}) 
:= \lim_{\delta\to 0}\PP_{\beta}^{\delta}[\conn^{\delta}=\alpha] 
= \frac{\sum_{\varpi}P_{\pi(\alpha),\varpi} \, L_\varpi}{\sum_\varpi \LE_{\beta,\varpi} \, L_\varpi}  \; > \; 0 .
\end{align}
The conformal invariance~\eqref{eqn::proba_confinv} follows from properties of $L_{\varpi}$.
The smoothness of $L_{n,r}$ follows from an explicit formula for $\phi_n$ given by an integral
of type~\eqref{eq: observable_limit_detA-DetC}. 
Thus, $L_{\varpi}$ and $p^{\alpha}_{\beta}$ are also smooth.
\end{proof}

\ustcrossingproba*

\begin{proof}[Proof of Theorem~\ref{thm::ust_crossing_proba}]
By~\eqref{eqn::KW_limit} it suffices to prove that 
\begin{align*}
p_{\beta}^{\alpha}(\Omega; x_1, \ldots, x_{2N})  = \frac{\PartF_{\alpha}(\Omega; x_1, \ldots, x_{2N})}{\LF_{\beta}(\Omega; x_1, \ldots, x_{2N})} .
\end{align*}
This follows from  Proposition~\ref{prop::indeptbc2} in Appendix~\ref{app: marginal law}, considering the conditional law of the Peano curve and its scaling limit given the event $\{\conn^{\delta}=\alpha\}$.
\end{proof}

\subsection{Asymptotics of pure partition functions}
\label{subsec::ppf_asy}

The purpose of this section is to verify the asymptotic properties \textnormal{(ASY)}~(\ref{eqn::ppf_asy1},~\ref{eqn::ppf_asy2}) 
of the pure partition functions $\PartF_\alpha$ in Theorem~\ref{thm::ppf}.

\smallbreak

Throughout, fix $N\ge 2$ and $j\in\{1,2,\ldots, 2N-1\}$. We will use the following two operations on planar link patterns.
\begin{itemize}
\item \textnormal{\bf Link removal:} 
$\hat{\wr}_j(\alpha) := \alpha/\{j,j+1\}$, a map
$\hat{\wr}_j \colon \{\alpha\in\LP_N \colon \{j,j+1\}\in\alpha\}\to \LP_{N-1}$, 
where $\alpha/\{j,j+1\} \in \LP_{N-1}$ is the link pattern obtained from $\alpha$ by removing $\{j,j+1\}$ and relabeling the remaining indices by $1, 2, \ldots, 2N-2$.
Note that $\hat{\wr}_j$ is a bijection.

\smallbreak

\item \textnormal{\bf Link tying:} 
$\hat{\wp}_j(\alpha) := \wp_j(\alpha)/\{j,j+1\}$, a map 
$\hat{\wp}_j \colon \{\alpha\in\LP_N \colon \{j,j+1\}\not\in\alpha\}\to \LP_{N-1}$, 
where $\wp_j \colon \LP_N\to \LP_N$ 
is the ``tying operation'' defined by 
\begin{align} \label{eq::tying_operation}
\wp_j \colon \LP_N\to \LP_N , \quad
\wp_j(\alpha) = 
\big(\alpha\setminus(\{j,\ell_1\}, \{j+1, \ell_2\})\big)\cup \{j,j+1\}\cup \{\ell_1, \ell_2\} , 
\end{align}  
where $\ell_1$ \textnormal{(}resp.~$\ell_2$\textnormal{)} 
is the pair of $j$ \textnormal{(}resp.~$j+1$\textnormal{)} in $\alpha$ \textnormal{(}and $\{j,\ell_1\}, \{j+1, \ell_2\}, \{\ell_1, \ell_2\}$ are unordered\textnormal{)} --- see Figure~\ref{fig::tying}.
Note that $\hat{\wp}_j$ is a surjection, but not a bijection.
\end{itemize}

\begin{figure}[ht!]
\begin{center}
\includegraphics[scale=0.25]{figures/tying_map.pdf}
\end{center}
\caption{\label{fig::tying} 
Illustration of the tying operation~\eqref{eq::tying_operation} on link patterns.}
\end{figure}

\begin{lemma}\label{lem::meander_lemma}
Fix $N\ge 2$. 
The following properties hold for the symmetric matrix~\eqref{eqn::renormalized_meander_matrix}. 
\begin{enumerate}
\item \label{item:meander_removal}
If $\LM_{\beta,\alpha}=1$ and $\{j,j+1\}\in\alpha$, then 
we have 
$\{j,j+1\}\not\in\beta$ and $\LM_{\hat{\wr}_j(\alpha), \hat{\wp}_j(\beta)}=1$.

\smallbreak

\item \label{item:meander_tying}
If $\{j,j+1\}\in\alpha$ and $\{j,j+1\}\not\in\beta$, then 
we have 
$\LM_{\hat{\wr}_j(\alpha), \hat{\wp}_j(\beta)} = \LM_{\alpha, \beta}$.

\smallbreak

\item \label{item::alpha_nicebeta_item1}
If $\{j,j+1\}\not\in\alpha$, then there exists $\beta\in\LP_N$ such that $\{j,j+1\}\in\beta$ and $\LM_{\alpha,\beta}=1$. 
\end{enumerate}
\end{lemma}

\begin{proof} 
Properties~\ref{item:meander_removal}~\&~\ref{item:meander_tying}  are straightforward, see Figure~\ref{fig::meander_removal}. 
Property~\ref{item::alpha_nicebeta_item1} can be verified by induction: the case of $N=2$ is trivial. 
When $N\ge 3$, for each $\alpha\in\LP_N$ with $\{j,j+1\}\not\in\alpha$, we find
$\LM_{\alpha, \hat{\wr}_j^{-1}(\gamma)}=1$ by taking 
$\gamma\in\LP_{N-1}$ such that $\LM_{\hat{\wp}_j(\alpha), \gamma}=1$, and $\beta := \hat{\wr}_j^{-1}(\gamma)$.
\end{proof}

\begin{figure}[ht!]
\begin{center}
\includegraphics[width=0.7\textwidth]{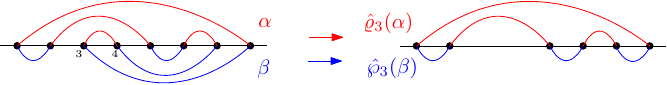}
\end{center}
\caption{\label{fig::meander_removal}
Consider $\alpha=\{\{1,8\}, \{2,5\}, \{3,4\}, \{6,7\}\}$ and $\beta=\{\{1,2\}, \{3,8\}, \{4,7\}, \{5,6\}\}$, which satisfy $\LM_{\alpha,\beta}=1$. 
Also, we see that $\{3,4\}\in\alpha$ and $\hat{\wr}_3(\alpha)=\{\{1,6\}, \{2,3\}, \{4,5\}\}$ 
as well as $\{3,4\}\not\in\beta$ and $\hat{\wp}_3(\beta)=\{\{1,2\}, \{3,4\}, \{5,6\}\}$. 
Lastly, note that $\LM_{\hat{\wr}_3(\alpha), \hat{\wp}_3(\beta)}=1$. }
\end{figure}

To prove the asymptotic properties~(\ref{eqn::ppf_asy1}~\ref{eqn::ppf_asy2})
of the pure partition functions $\PartF_{\alpha}$ defined in Equation~\eqref{eqn::ppf_def}, 
we make use of the corresponding
properties~(\ref{eqn::USTASY1}~\ref{eqn::USTASY2})
of the Coulomb gas functions 
\begin{align} \label{eqn::LF_in_terms_of_ppf}
\LF_{\beta} = \sum_{\alpha \in \LP_N} \LM_{\beta, \alpha} \,\PartF_{\alpha} . 
\end{align}

\begin{lemma}\label{lem::ppf_asy2_help}
Let $\alpha \in \LP_N$. 
Fix $j \in \{1, 2, \ldots, 2N-1 \}$ and suppose $\{j,j+1\}\not\in\alpha$. 
Then, for all $\xi \in (x_{j-1}, x_{j+2})$, the following asymptotic property holds: 
\begin{align}\label{eq::ppf_asy2_help}
\; & \lim_{x_j , x_{j+1} \to \xi} 
\frac{\PartF_{\alpha}(\boldsymbol{x})}{(x_{j+1} - x_j)^{1/4} |\log(x_{j+1}-x_j)|} 
= 0 ,
\end{align}
where  $\boldsymbol{x} = (x_1, \ldots,x_{2N})$ 
and $\boldsymbol{\ddot{x}}_j = (x_1, \ldots, x_{j-1}, x_{j+2}, \ldots, x_{2N})$. 
\end{lemma}

\begin{proof}
Pick $\beta \in \LP_N$ 
as in Item~\ref{item::alpha_nicebeta_item1} of Lemma~\ref{lem::meander_lemma}. 
From the proof of Theorem~\ref{thm::ust_crossing_proba}, we see that the relation $\PartF_{\alpha}(\boldsymbol{x}) = \LM_{\alpha, \beta} \, p_{\beta}^{\alpha}(\boldsymbol{x}) \, \LF_{\beta}(\boldsymbol{x})$ holds, where $p_{\beta}^{\alpha}(\boldsymbol{x}) \in [0,1]$. 
Hence, by Proposition~\ref{prop: Asymptotics F}, 
\begin{align*}
\lim_{x_j , x_{j+1} \to \xi} 
\frac{\PartF_{\alpha}(\boldsymbol{x})}{(x_{j+1} - x_j)^{1/4} |\log(x_{j+1}-x_j)|} 
= \; & \lim_{x_j , x_{j+1} \to \xi} 
\frac{\LM_{\alpha, \beta} \; p_{\beta}^{\alpha}(\boldsymbol{x}) \; \LF_{\beta}(\boldsymbol{x})}{(x_{j+1} - x_j)^{1/4} |\log(x_{j+1}-x_j)|} 
= 0 ,
\end{align*} 
since $\LF_{\beta}$ has the subleading asymptotics~\eqref{eqn::USTASY1}. 
\end{proof}

\begin{lemma}\label{lem::ppf_asy1}
Let $\alpha \in \LP_N$. 
Fix $j \in \{1,2, \ldots, 2N-1 \}$ and suppose $\{j, j+1\}\in\alpha$. 
Then, for all $\xi \in (x_{j-1}, x_{j+2})$, the asymptotic property~\eqref{eqn::ppf_asy1} holds: 
\begin{align} \label{eq: eqn::ppf_asy1_again}
\; & \lim_{x_j , x_{j+1} \to \xi} 
\frac{\PartF_{\alpha}(\boldsymbol{x})}{(x_{j+1} - x_j)^{1/4} |\log(x_{j+1}-x_j)|} 
= \PartF_{\hat{\wr}_j(\alpha)}(\boldsymbol{\ddot{x}}_j) ,
\end{align}
where  $\boldsymbol{x} = (x_1, \ldots,x_{2N})$ 
and $\boldsymbol{\ddot{x}}_j = (x_1, \ldots, x_{j-1}, x_{j+2}, \ldots, x_{2N})$. 
\end{lemma}

\begin{proof}
Pick $\beta \in \LP_N$ satisfying $\{j,j+1\}\not\in\beta$. 
By~\eqref{eqn::LF_in_terms_of_ppf}, we have
\begin{align*}
\frac{\LF_{\beta}(\boldsymbol{x})}{(x_{j+1} - x_j)^{1/4} |\log(x_{j+1}-x_j)|} 
= \sum_{\gamma \in \LP_N} \LM_{\beta, \gamma} \,
\frac{\PartF_{\gamma}(\boldsymbol{x})}{(x_{j+1} - x_j)^{1/4} |\log(x_{j+1}-x_j)|} .
\end{align*}
Taking the limit $x_j , x_{j+1} \to \xi$ of both sides\footnote{Note that we already know that the limits exist by Proposition~\ref{prop: Asymptotics F} and Equation~\eqref{eqn::ppf_def}.}, we see by~\eqref{eqn::USTASY2} (from Proposition~\ref{prop: Asymptotics F}) that 
\begin{align*}
\LF_{\hat{\wp}_j(\beta)}(\boldsymbol{\ddot{x}}_j) 
= \; & \sum_{\substack{\gamma \in \LP_N \colon \\ \{j,j+1\}\in\gamma}} \LM_{\beta, \gamma} \,
\lim_{x_j , x_{j+1} \to \xi} 
\frac{\PartF_{\gamma}(\boldsymbol{x})}{(x_{j+1} - x_j)^{1/4} |\log(x_{j+1}-x_j)|} ,
\end{align*}
also noting that the terms with $\{j,j+1\}\not\in\gamma$ vanish in this limit by the property~\eqref{eq::ppf_asy2_help} from 
Lemma~\ref{lem::ppf_asy2_help} for $\PartF_{\gamma}$. 
Invoking again the relation~\eqref{eqn::LF_in_terms_of_ppf} (applied to $\hat{\wp}_j(\beta)$), we obtain
\begin{align*}
\sum_{\hat{\gamma} \in \LP_{N-1}} \LM_{\hat{\wp}_j(\beta), \hat{\gamma}} \,\PartF_{\hat{\gamma}} (\boldsymbol{\ddot{x}}_j)
= \; & \sum_{\substack{\gamma \in \LP_N \colon \\ \{j,j+1\}\in\gamma}} \LM_{\hat{\wp}_j(\beta), \hat{\wr}_j(\gamma)} \,
\lim_{x_j , x_{j+1} \to \xi} 
\frac{\PartF_{\gamma}(\boldsymbol{x})}{(x_{j+1} - x_j)^{1/4} |\log(x_{j+1}-x_j)|} ,
\end{align*}
also using using Item~\ref{item:meander_tying} of Lemma~\ref{lem::meander_lemma} to write $\LM_{\beta,\gamma} = \LM_{\hat{\wp}_j(\beta),\hat{\wr}_j(\gamma)}$. 
After re-indexing the sum via the bijection $\hat{\wr}_j$, 
recalling that $\hat{\wp}_j$ is a surjection, and that the meander matrix is invertible, we obtain~\eqref{eq: eqn::ppf_asy1_again}.
\end{proof}

\begin{lemma}\label{lem::ppf_asy2}
Let $\alpha \in \LP_N$. 
Fix $j \in \{1, 2, \ldots, 2N-1 \}$ and suppose $\{j,j+1\}\not\in\alpha$. 
Then, for all $\xi \in (x_{j-1}, x_{j+2})$, the asymptotic property~\eqref{eqn::ppf_asy2} holds: 
\begin{align} \label{eq: eqn::ppf_asy2_again}
\; & \lim_{x_j , x_{j+1} \to \xi} 
\frac{\PartF_{\alpha}(\boldsymbol{x})}{(x_{j+1} - x_j)^{1/4}} 
= \pi \, \PartF_{\hat{\wp}_j(\alpha)}(\boldsymbol{\ddot{x}}_j) ,
\end{align}
where  $\boldsymbol{x} = (x_1, \ldots,x_{2N})$ 
and $\boldsymbol{\ddot{x}}_j = (x_1, \ldots, x_{j-1}, x_{j+2}, \ldots, x_{2N})$. 
\end{lemma}

\begin{proof}
Pick $\beta \in \LP_N$ satisfying $\{j,j+1\}\in\beta$. 
By~\eqref{eqn::LF_in_terms_of_ppf} and Item~\ref{item:meander_removal} of Lemma~\ref{lem::meander_lemma}, we have
\begin{align*}
\frac{\LF_{\beta}(\boldsymbol{x})}{(x_{j+1} - x_j)^{1/4}} 
= \sum_{\gamma \in \LP_N} \LM_{\gamma, \beta} \,
\frac{\PartF_{\gamma}(\boldsymbol{x})}{(x_{j+1} - x_j)^{1/4}} 
= \sum_{\substack{\gamma \in \LP_N \\ \{j,j+1\}\not\in\gamma}} \LM_{\hat{\wp}_j(\gamma), \hat{\wr}_j(\beta)} \,
\frac{\PartF_{\gamma}(\boldsymbol{x})}{(x_{j+1} - x_j)^{1/4}} .
\end{align*}
Taking the limit $x_j , x_{j+1} \to \xi$ of both sides\footnote{Note that from Theorem~\ref{thm::ust_crossing_proba}, we know that the summands are positive.}, we see by~\eqref{eqn::USTASY1} (from Proposition~\ref{prop: Asymptotics F}) that 
\begin{align*}
\pi \, \LF_{\hat{\wr}_j(\beta)}(\boldsymbol{\ddot{x}}_j) 
= \; & \sum_{\substack{\gamma \in \LP_N \colon \\ \{j,j+1\}\not\in\gamma}} \LM_{\hat{\wp}_j(\gamma), \hat{\wr}_j(\beta)} \,
\lim_{x_j , x_{j+1} \to \xi} \frac{\PartF_{\gamma}(\boldsymbol{x})}{(x_{j+1} - x_j)^{1/4}} .
\end{align*}
Invoking again the relation~\eqref{eqn::LF_in_terms_of_ppf} (applied to $\hat{\wr}_j(\beta)$), we obtain
\begin{align*}
\sum_{\hat{\mu} \in \LP_{N-1}} \LM_{\hat{\wr}_j(\beta), \hat{\mu}} \, \PartF_{\hat{\mu}}(\boldsymbol{\ddot{x}}_j) 
= \; & \frac{1}{\pi} 
\sum_{\substack{\gamma \in \LP_N \colon \\ \{j,j+1\}\not\in\gamma}} \LM_{\hat{\wp}_j(\gamma), \hat{\wr}_j(\beta)} \,
\lim_{x_j , x_{j+1} \to \xi} \frac{\PartF_{\gamma}(\boldsymbol{x})}{(x_{j+1} - x_j)^{1/4}} \\
= \; & \frac{1}{\pi} 
\sum_{\hat{\mu} \in \LP_{N-1}} \sum_{\gamma \in \hat{\wp}_j^{-1}(\hat{\mu})}
\LM_{\hat{\mu}, \hat{\wr}_j(\beta)} \,
\lim_{x_j , x_{j+1} \to \xi} \frac{\PartF_{\gamma}(\boldsymbol{x})}{(x_{j+1} - x_j)^{1/4}} .
\end{align*}
Using the bijection $\hat{\wr}_j(\beta) = \hat{\beta} \leftrightarrow \beta$ and the inverse meander matrix, we obtain~\eqref{eq: eqn::ppf_asy2_again}: 
\begin{align*}
\PartF_{\hat{\wp}_j(\alpha)}(\boldsymbol{\ddot{x}}_j) 
= \; & \sum_{\hat{\beta} \in \LP_{N-1}} \LM_{\hat{\wp}_j(\alpha), \hat{\beta}}^{-1} \,
\sum_{\hat{\mu} \in \LP_{N-1}} \LM_{\hat{\beta}, \hat{\mu}} \, \PartF_{\hat{\mu}}(\boldsymbol{\ddot{x}}_j) \\
= \; & \frac{1}{\pi} 
\sum_{\hat{\beta} \in \LP_{N-1}} \LM_{\hat{\wp}_j(\alpha), \hat{\beta}}^{-1} \,
\sum_{\hat{\mu} \in \LP_{N-1}} \sum_{\gamma \in \hat{\wp}_j^{-1}(\hat{\mu})}
\LM_{\hat{\mu}, \hat{\beta}} \,
\lim_{x_j , x_{j+1} \to \xi} \frac{\PartF_{\gamma}(\boldsymbol{x})}{(x_{j+1} - x_j)^{1/4}} \\
= \; & \frac{1}{\pi} \sum_{\gamma \in \hat{\wp}_j^{-1}(\hat{\wp}_j(\alpha))}
\lim_{x_j , x_{j+1} \to \xi} \frac{\PartF_{\gamma}(\boldsymbol{x})}{(x_{j+1} - x_j)^{1/4}} \\
= \; & \frac{1}{\pi} \lim_{x_j , x_{j+1} \to \xi} \frac{\PartF_{\alpha}(\boldsymbol{x})}{(x_{j+1} - x_j)^{1/4}} .
\end{align*}
This shows the asserted asymptotics property~\eqref{eq: eqn::ppf_asy2_again}.
\end{proof}

\subsection{Finishing the proofs of Theorems~\ref{thm::coulombgasintegral} and~\ref{thm::ppf}}
\label{subsec::ppf_concluding}

\begin{itemize}
\item The function $\LF_{\beta}$ satisfies PDEs~\eqref{eqn::USTPDE} due to Proposition~\ref{prop: PDEs F}, or Corollary~\ref{cor::coulombgasintegralPDE}. 

\smallbreak

\item The function $\LF_{\beta}$ satisfies M\"obius covariance~\eqref{eqn::USTCOV} due to Proposition~\ref{prop: full Mobius covariance F}. 

\smallbreak

\item Since $\LF_{\beta}$'s satisfy PDEs~\eqref{eqn::USTPDE} and M\"obius covariance~\eqref{eqn::USTCOV}, and $\PartF_{\alpha}$ is a linear combination~\eqref{eqn::ppf_def} of $\LF_{\beta}$'s, we see that $\PartF_{\alpha}$ also satisfies PDEs~\eqref{eqn::USTPDE} and M\"obius covariance~\eqref{eqn::USTCOV}. 

\smallbreak

\item The asymptotics~\eqref{eqn::USTASY1} and~\eqref{eqn::USTASY2} of $\LF_{\beta}$ are proved in Proposition~\ref{prop: Asymptotics F}.

\smallbreak

\item The asymptotics~\eqref{eqn::ppf_asy1} and~\eqref{eqn::ppf_asy2} of $\PartF_{\alpha}$ are proved in Lemmas~\ref{lem::ppf_asy1} and~\ref{lem::ppf_asy2}.
 
\smallbreak
 
\item We have $\LF_{\beta}>0$ due to Proposition~\ref{prop: positivity}.  

\smallbreak

\item The positivity of $\PartF_{\alpha}$ follows because $\PartF_{\alpha}=p^{\alpha}_{\beta} \, \LF_{\beta}$ and $p^{\alpha}_{\beta}, \,\LF_{\beta} > 0$ when $\LM_{\alpha, \beta}=1$.
\end{itemize}

\medbreak

It remains to show the linear independence of both collections
$\{\LF_{\beta} \colon \beta\in\LP_N\}$ and $\{\PartF_{\alpha} \colon \alpha \in\LP_N\}$. 
We give an argument based on the asymptotic properties~\eqref{eqn::USTASY1} and~\eqref{eqn::USTASY2} of $\LF_{\beta}$. 
For $\alpha=\{\{a_1, b_1\}, \ldots, \{a_N, b_N\}\}\in\LP_N$ with $a_r < b_r$ for $1\le r\le N$
(but not with link endpoints ordered as in~\eqref{eq: link pattern ordering}),
we say that the ordering $\{a_1, b_1\}, \ldots, \{a_N, b_N\}$ of links is \emph{allowable} 
if all links of $\alpha$ can be removed in the order $\{a_1, b_1\}, \ldots, \{a_N, b_N\}$ 
in such a way that at each step, the link to be removed connects two consecutive indices 
(cf.~\cite{FloresKlebanPDE, PeltolaWuGlobalMultipleSLEs}).
Note that each $\alpha$ has at least one allowable ordering. 
For such a choice, the iterated limit 
\begin{align*}
&\mathrm{Lim}_{\alpha}(F)\\
:= &\lim_{x_{a_N}, x_{b_N}\to \xi_N} \cdots \lim_{x_{a_1}, x_{b_1}\to \xi_1}|x_{b_N}-x_{a_N}|^{-1/4}\frac{|x_{b_{N-1}}-x_{a_{N-1}}|^{-1/4}}{|\log(x_{b_{N-1}}-x_{a_{N-1}})|}\cdots\frac{|x_{b_1}-x_{a_1}|^{-1/4}}{|\log(x_{b_1}-x_{a_1})|} \; F (\boldsymbol{x}) ,
\end{align*} 
at $\xi_1<\cdots<\xi_N$, 
with $\boldsymbol{x} = (x_1, \ldots, x_{2N})$, 
is well-defined for any function $F \colon \chamber_{2N}\to \C$ in the span of $\{\LF_{\beta} \colon \beta\in\LP_N\}$ or $\{\PartF_{\alpha} \colon \alpha \in\LP_N\}$. 
Thus, it defines a linear operator on either span. Furthermore, (\ref{eqn::ppf_asy1},~\ref{eqn::ppf_asy2}) yield
\begin{align*}
\mathrm{Lim}_{\alpha}(\PartF_{\beta}) 
= \begin{cases}
0, &\textnormal{if }\beta\neq\alpha , \\
\pi, &\textnormal{if }\beta=\alpha. 
\end{cases}
\end{align*}
This implies that the collection $\{\PartF_{\alpha} \colon \alpha \in\LP_N\}$ is linearly independent. 
As the meander matrix~\eqref{eqn::renormalized_meander_matrix} is invertible, 
by definition~\eqref{eqn::ppf_def} of $\PartF_\alpha$, 
it gives rise to a change of basis between
$\{\PartF_{\alpha} \colon \alpha \in\LP_N\}$ and $\{\LF_{\beta} \colon \beta\in\LP_N\}$,
so the latter collection is linearly independent too.  
\qed

\appendix

\smallskip{}
\section{Period matrices for cycles and intervals}
\label{app::matrices}

We record here a useful identity relating the two matrices (\ref{eq::P_matrices},~\ref{eq::A_matrices}). 
As a warm-up, note that by deforming and decomposing 
the integration of $\omega_0$ along the loop $\acycle^{\vcenter{\hbox{\includegraphics[scale=0.2]{figures/link-0.pdf}}}}_1$ into 
the integration along the interval $[x_{1}, x_{2}]$
and $\exp(\frac{2 \pi \ii}{2} )$ times the same negatively oriented interval, we have
\begin{align} \label{eq::relate_loop_and_interval}
\ointclockwise_{\acycle^{\vcenter{\hbox{\includegraphics[scale=0.2]{figures/link-0.pdf}}}}_1} \omega_0
= 2 \int_{x_{1}}^{x_{2}} \omega_0 .
\end{align}

In general, we have the following upper-triangular transformation.

\begin{lemma} \label{lem: relation_of_Pcirc_and_P_matrix}
We have $O_\beta \, P_{\beta} = P_{\beta}^{\circ}$, 
where $O_\beta$ is an upper-triangular matrix with entries
\begin{align} \label{eq::Mbeta_explicit}
(O_\beta)_{r,s} = 
\begin{cases}
0 , & r > s , \\
2 , & r = s , \\
4 \, (-\ii)^{a_s - a_r} , & r < s \textnormal{ and } a_s < b_r , \\
0 , & r < s \textnormal{ and } a_s > b_r ,
\end{cases}
\qquad \textnormal{for } r, s \in \{1,2\ldots,N\} .
\end{align}
\end{lemma}

\begin{proof}
From the decompositions of the integrals of $\omega_{s-1}$ along $\acycle^\beta_{r}$ on the one hand, 
and along $[x_{a_r}, x_{b_r}]$ on the other hand, 
into linear combinations 
\begin{align}
\label{eq:loop_expand}
\ointclockwise_{\acycle^\beta_{r}} \omega_{s-1} 
= \; & 2 \, \sum_{k = a_r}^{b_r-1} (-\ii)^{k - a_r} \,  \landupint_{x_{k}}^{x_{k+1}} \omega_{s-1} , \\
\label{eq:interval_expand}
\landupint_{x_{a_r}}^{x_{b_r}} \omega_{s-1} 
= \; & \sum_{k = a_r}^{b_r-1} \ii^{k - a_r} \, \landupint_{x_{k}}^{x_{k+1}} \omega_{s-1} ,
\end{align}
it is clear that $(O_\beta)_{r,s} = 0$ for all $r > s$ 
and $(O_\beta)_{r,s} = 0$ for all $r < s$ such that 
$a_s > b_r$. 
To find the non-zero entries $(O_\beta)_{1,s}$ with $s \in \{1,2\ldots,N\}$, note that by~(\ref{eq:loop_expand},~\ref{eq:interval_expand}), 
\begin{align*}
\ointclockwise_{\acycle^\beta_{1}} \omega_{s-1} 
= \; & 2 \, \landupint_{x_{a_1}}^{x_{b_1}} \omega_{s-1}
\; + \; 4 \sum_{t=2}^N \one\{a_t < b_1\} (-\ii)^{a_t - a_1} \, \landupint_{x_{a_t}}^{x_{b_t}} \omega_{s-1} \\
= \; & \sum_{t=1}^N (O_\beta)_{1,t} \, \landupint_{x_{a_t}}^{x_{b_t}} \omega_{s-1} ,
&& \quad \textnormal{[as in~\eqref{eq::Mbeta_explicit}]}  
\end{align*}
where we used the fact that the parity of $a_t$ and $b_t$ is always different. 
This shows that $(O_\beta)_{1,t}$ have the claimed form~\eqref{eq::Mbeta_explicit}.
The general formula follows inductively.
\end{proof}

\smallskip{}

\section{Examples of partition functions $\LF_\beta$}
\label{app::examples}

As examples, let us discuss $\LF_{\beta}$ for the special cases of $N=1$ and $N=2$.

\begin{lemma} \label{lem: N=1}
Take $N=1$ in~\eqref{eq: Def of F beta}. 
For $\vcenter{\hbox{\includegraphics[scale=0.3]{figures/link-0.pdf}}} = \{ \{1, 2\} \}$ and $x_1<x_2$, we have
\begin{align*}
\LF_{\vcenter{\hbox{\includegraphics[scale=0.2]{figures/link-0.pdf}}}}(x_1,x_2)
= \pi \, (x_2-x_1)^{1/4} . 
\end{align*}
\end{lemma}

\begin{proof}
By the branch choice for $f_{\beta}$, setting $v = (u-x_1) / (x_{2}-x_{1})$, we have 
\begin{align*}
\LF_{\vcenter{\hbox{\includegraphics[scale=0.2]{figures/link-0.pdf}}}}(x_1,x_2) 
= \; & 
(x_2 - x_1)^{1/4} \int_{x_1}^{x_2} 
\frac{\ud u}{|u-x_1|^{1/2} |u-x_2|^{1/2}} \\
=\; &  (x_2 - x_1)^{1/4} 
\int_0^1\frac{\ud v}{\sqrt{v (1-v)}} 
= \pi \, (x_2 - x_1)^{1/4} , 
\end{align*}
where $\sqrt{\cdot}$ denotes the principal branch of the square root.
\end{proof}

\begin{lemma} \label{lem: N=2}
Take $N=2$ in~\eqref{eq: Def of F beta}. 
For $\vcenter{\hbox{\includegraphics[scale=0.3]{figures/link-1.pdf}}} = \{\{1, 2\}, \{3, 4\} \}$ 
and $\vcenter{\hbox{\includegraphics[scale=0.3]{figures/link-2.pdf}}} = \{\{1, 4\}, \{2, 3\} \}$, 
and $x_1<x_2<x_3<x_4$, we have
\begin{align}
\LF_{\vcenter{\hbox{\includegraphics[scale=0.2]{figures/link-1.pdf}}}} (x_1,x_2,x_3,x_4)
= \; & \pi^2 \, (x_4-x_1)^{1/4}(x_3-x_2)^{1/4}z^{1/4} 
\; \hF\big(1/2, 1/2, 1; z \big),
\label{eqn::hSLE8_case1} \\
\LF_{\vcenter{\hbox{\includegraphics[scale=0.2]{figures/link-2.pdf}}}} (x_1,x_2,x_3,x_4)
= \; & \pi^2 \, (x_2-x_1)^{1/4}(x_4-x_3)^{1/4}(1-z)^{1/4} 
\; \hF \big(1/2, 1/2, 1; 1-z \big), \label{eqn::hSLE8_case2}
\end{align}where $\hF$ is the hypergeometric function~\textnormal{\cite[Eq.~(15.1.1)]{AbramowitzHandbook}} 
and 
\begin{align*}
z=\frac{(x_2-x_1)(x_4-x_3)}{(x_3-x_1)(x_4-x_2)}. 
\end{align*}
\end{lemma}

Note also that $\frac{\pi}{2} \, \hF \big(1/2, 1/2, 1; \cdot \big)$ is the elliptic integral of the first kind, which is not a surprise given the relation~\eqref{eqn::PartFcirc_detAcirc} of $\LF_{\vcenter{\hbox{\includegraphics[scale=0.2]{figures/link-1.pdf}}}}$ and $\LF_{\vcenter{\hbox{\includegraphics[scale=0.2]{figures/link-2.pdf}}}}$ to the $a$-cycles in Section~\ref{subsec::Coulomb_gas_Fbeta}.

\begin{proof}
We first show~\eqref{eqn::hSLE8_case1}. 
Proposition~\ref{prop::two_Fs_as_determinants}
gives 
\begin{align*}
\LF_{\vcenter{\hbox{\includegraphics[scale=0.2]{figures/link-1.pdf}}}} (x_1,x_2,x_3,x_4)
= \frac{1}{4} \,
\LF^\circ_{\vcenter{\hbox{\includegraphics[scale=0.2]{figures/link-1.pdf}}}} (x_1,x_2,x_3,x_4)
= \frac{\pi}{2} \, \ointclockwise_{\acycle^{\vcenter{\hbox{\includegraphics[scale=0.2]{figures/link-1.pdf}}}}_1}  \ud u_1 
\; f^\circ_{\vcenter{\hbox{\includegraphics[scale=0.2]{figures/link-1.pdf}}}}(x_1,x_2,x_3,x_4;u_1) ,
\end{align*}
where $\acycle^{\vcenter{\hbox{\includegraphics[scale=0.2]{figures/link-1.pdf}}}}_1$ is the loop surrounding
$x_1$ and $x_2$. 
Using~\eqref{eq::relate_loop_and_interval} and the branch choice of
$f^\circ_{\vcenter{\hbox{\includegraphics[scale=0.2]{figures/link-1.pdf}}}}$, we obtain 
\begin{align*}
\frac{\LF_{\vcenter{\hbox{\includegraphics[scale=0.2]{figures/link-1.pdf}}}} (x_1,x_2,x_3,x_4)}{f^{(0)}(x_1, x_2, x_3, x_4)}
= \; & 
\frac{\pi}{f^{(0)}(x_1, x_2, x_3, x_4)}
\int_{x_1}^{x_2} \ud u_1 
\; f^\circ_{\vcenter{\hbox{\includegraphics[scale=0.2]{figures/link-1.pdf}}}}(x_1,x_2,x_3,x_4;u_1) 
\\
= \; & \pi \, \int_{x_1}^{x_2} 
\frac{\ud u}{|u-x_1|^{1/2} |u-x_2|^{1/2} |u-x_3|^{1/2} |u-x_4|^{1/2}} ,
\end{align*}
where $f^{(0)}$ is defined in~\eqref{eqn::def_fnod}.
To simplify this, writing $x_{j i} = x_j - x_i$ and setting 
$w = \big(\frac{u-x_1}{x_{21}}\big)\big(\frac{x_{42}}{x_4-u}\big)$, we~have $u = x_4 - x_{41} (1 + w x_{21}/x_{42})^{-1}$, and using also~\cite[Eq.~(17.2.6), Eq.~(17.3.9)]{AbramowitzHandbook}, we obtain
\begin{align*}
\frac{\LF_{\vcenter{\hbox{\includegraphics[scale=0.2]{figures/link-1.pdf}}}} (x_1,x_2,x_3,x_4)}{f^{(0)}(x_1, x_2, x_3, x_4)} 
= & \; 
\frac{\pi }{x_{31}^{1/2} x_{42}^{1/2}}\int_0^1\frac{\ud w}{\sqrt{w (1-w) (1-zw)}} 
\\
= & \; 
\frac{2\pi }{x_{31}^{1/2} x_{42}^{1/2}}\int_0^{\pi/2}\frac{\ud\theta}{\sqrt{1-z\sin^2\theta}} 
= \frac{\pi^2 }{x_{31}^{1/2} x_{42}^{1/2}}
\; \hF \big(1/2, 1/2, 1; z \big) , 
\end{align*}
which gives~\eqref{eqn::hSLE8_case1}.
The identity~\eqref{eqn::hSLE8_case2} then follows from~\eqref{eqn::hSLE8_case1} 
and Propositions~\ref{prop: full Mobius covariance F} and~\ref{prop: positivity}. 
\end{proof}

\smallskip{}

\section{Schwarz-Christoffel type conformal mappings}
\label{app::SC_mappings}

The goal of this appendix is to derive another explicit expression 
for the observable $\phi_{\beta}$ on $\Omega=\HH$, 
compared to the one~\eqref{eq: observable_limit_detA-DetC} found in the proof of Lemma~\ref{lem::conformal_general_expansion}.   
For definiteness and without loss of generality 
(by the full M\"obius covariance 
from Proposition~\ref{prop: full Mobius covariance F}
and duality for the UST model), 
we consider the case where $\{2N-1, 2N\}\in\beta$.

\begin{proposition} \label{prop: SC map}
On $\Omega = \HH$, the holomorphic function $\phi_{\beta}$ with boundary data~\eqref{eqn::boundarydata} 
\textnormal{(}cf.~Proposition~\ref{prop::holo_cvg}\textnormal{)}
has the explicit formula\footnote{Note that~\eqref{eq: Schwarz-Christoffel formula app} is the same for any branch choice, 
since the multiplicative phase factors from the numerator and denominator cancel out.} 
\begin{align} \label{eq: Schwarz-Christoffel formula app}
\phi_{\beta}(z) 
= \phi_{\beta}(z; \boldsymbol{x}) 
= \landupint_{x_1}^z \frac{\tilde{Q}_{\beta}(u) \, \ud u}{\wfunc(u; \boldsymbol{x})} 
\bigg(\landupint_{x_1}^{x_{b_1}} \frac{\tilde{Q}_{\beta}(u) \, \ud u}{\wfunc(u; \boldsymbol{x})} \bigg)^{-1} , \qquad z \in \overline{\HH} ,
\end{align}
where $\wfunc(u; \boldsymbol{x})$ is the meromorphic function~\eqref{eqn::def_hypercurve} on $\Sigma = \Sigma_{x_1, \ldots, x_{2N}}$, and 
\begin{align*}
\tilde{Q}_{\beta}(u) 
= \tilde{Q}_{\beta}(u; \boldsymbol{x}) 
:= \prod_{\ell=1}^{N-2}(u-\mu_\ell)
= u^{N-2} + \sum_{s=0}^{N-3} \tilde{\nu}_s \, u^s 
\end{align*} 
is a monic polynomial with roots $\mu_1, \ldots, \mu_{N-2} \in \R$ 
and coefficients $\tilde{\nu}_0, \ldots, \tilde{\nu}_{N-3} \in \R$ 
determined as the unique solution $\boldsymbol{\tilde{\nu}} = (\tilde{\nu}_0,\tilde{\nu}_1,\ldots,\tilde{\nu}_{N-3},1) \in \R^{N-1}$ to the linear system
\begin{align}\label{eqn::newequationR}
M_{\beta} \boldsymbol{\tilde{\nu}}^t = (0,0,\ldots,0)^t ,
\qquad \textnormal{where} \qquad 
M_{\beta}:=\Big( \landupint_{x_{a_{r+1}}}^{x_{b_{r+1}}} \omega_{s-1} \Big)_{\substack{r \in \{1,2,\ldots,N-2\} \\ s \in \{1,2,\ldots,N-1\}}} 
\end{align}
is a line integral counterpart of a matrix involving $a$-periods of holomorphic one-forms~\eqref{eqn::parametera_polynomial}.
Here, 
$M_\beta := (\hat{A}_\beta)_{N-1,\emptyset}$ 
is the submatrix of $A_\beta$ obtained by removing the last row. 
\end{proposition}

Note that $\phi_{\beta}$ is a Schwarz-Christoffel map~\cite[Chapter~6, Section~2.2]{AhlforsComplexAnalysis},  
conformal from $\HH$ onto a slit rectangle. 
The accessory parameters $\mu_1, \ldots, \mu_{N-2} \in \R$ are those points $x_{a_{r+1}} < \mu_r < x_{b_{r+1}}$ on the real line which are mapped to the tips of the slits in the image of~$\phi_{\beta}$. 
See Figure~\ref{fig::slitrectangle} for an illustration.

\begin{proof}
Let $R_\beta := (\hat{A}_\beta)_{N-1,N-1}$ 
be the principal submatrix of $A_\beta$ obtained by removing the last row and the last column. 
Expanding $\det A_{\beta}$ according to the cofactors along the last row, we have
\begin{align*}
\det A_{\beta}
= \; & \sum_{s=1}^{N-1} (-1)^{N-1+s} \, ( \det (\hat{A}_{\beta})_{N-1,s} ) \,
\landupint_{x_{2N-1}}^{x_{2N}} \omega_{s-1} ,
\end{align*}
where 
$\det (\hat{A}_{\beta})_{N-1,s}$ is the minor obtained from $A_{\beta}$ by removing the last row and $s$:th column. Thus, we see that
\begin{align*}
\det A_{\beta} = \left( \landupint_{x_{2N-1}}^{x_{2N}} \frac{\tilde{Q}_{\beta}(u) \, \ud u}{\wfunc(u; \boldsymbol{x})} \right) \times \det R_{\beta} ,
\end{align*}
which implies by Lemma~\ref{lem: matrices_invertible} that $R_{\beta}$ is invertible and 
the linear system~\eqref{eqn::newequationR} has a unique solution.
It remains to note by~\eqref{eqn::newequationR} that the function~\eqref{eq: Schwarz-Christoffel formula app} satisfies the boundary data~\eqref{eqn::boundarydata} from Proposition~\ref{prop::holo_cvg}, which determines $\phi_{\beta}$ uniquely. 
\end{proof}

\smallskip{}

\section{Identifying Peano curves with given connectivity}
\label{app: marginal law}

The main purpose of this appendix is to verify the following property, needed in the proof of Theorem~\ref{thm::ust_crossing_proba} in Section~\ref{subsec::ust_crossingproba}. We shall use the notations from Section~\ref{sec::ppf}.
\begin{proposition}\label{prop::indeptbc2}
For all $\alpha, \beta, \gamma \in \LP_N$ such that
$\LM_{\alpha, \beta} = \LM_{\alpha, \gamma} = 1$, we have
\begin{align*}
\PartF_\alpha 
= p^{\alpha}_{\beta} \;\LF_{\beta}  = p^{\alpha}_{\gamma} \;\LF_{\gamma} .
\end{align*}
\end{proposition}

This quantity also describes the conditional laws of the scaling limit curves for each connectivity $\alpha \in \LP_N$ (in the spirit of Doob's transform), as stated in the following result. 
As in the proof of Theorem~\ref{thm::ust_general}, we will fix a sequence of conformal maps $\varphi_{\delta} \colon \Omega^{\delta, \diamond} \to \HH$ and $\varphi \colon \Omega \to \HH$
such that $\varphi(x_1)<\cdots<\varphi(x_{2N})$ and,
as $\delta \to 0$, the maps $\varphi_{\delta}^{-1}$ converge to $\varphi^{-1}$ locally uniformly on $\HH$, and $\smash{\varphi_{\delta}(x_j^{\delta, \diamond}) \to \varphi(x_j)}$ for all $1\le j\le 2N$. 
We consider the Peano curve $\eta_i^{\delta}$ started from $\smash{x_i^{\delta, \diamond}}$ in the scaling limit.

\begin{proposition} \label{prop::conditionallaw_ppf}
Assume the same setup as in Theorems~\ref{thm::ust_general} and~\ref{thm::ust_crossing_proba}. Fix $\alpha, \beta\in\LP_N$ such that $\LM_{\alpha,\beta}=1$. 
The conditional law 
of $\eta_i^{\delta}$ given $\{\conn^{\delta}=\alpha\}$ 
converges weakly to the image under $\varphi^{-1}$ of the Loewner chain with driving function solving the following SDEs, up to 
the first time when $\varphi(x_{i-1})$ or $\varphi(x_{i+1})$ is swallowed\textnormal{:} 
\begin{align*}
\begin{cases}
\ud W_t = \sqrt{8} \, \ud B_t + 8 \,  (\partial_{i}\log (p^{\alpha}_{\beta} \; \LF_{\beta}))(V_t^{1}, \ldots, V_t^{i-1}, W_t, V_t^{i+1}, \ldots, V_t^{2N}) \, \ud t, \\
\ud V_t^j =\frac{2 \, \ud t}{V_t^j-W_t},\\ 
W_0 = \varphi(x_{i}) ,\\
V_0^j=\varphi(x_j), \quad j\in\{1, \ldots, i-1, i+1, \ldots, 2N\} .
\end{cases}
\end{align*}
\end{proposition}

In particular, combining Propositions~\ref{prop::indeptbc2}~\&~\ref{prop::conditionallaw_ppf}, we see that the marginal law in the latter is given by 
the Loewner chain associated to the pure partition function $\PartF_\alpha$.

\smallbreak

We prove 
Proposition~\ref{prop::conditionallaw_ppf} in
Section~\ref{app::conditionallaw_ppf} and 
Proposition~\ref{prop::indeptbc2} in 
Section~\ref{app::indeptbc2}.

\subsection{Marginal law --- proof of Proposition~\ref{prop::conditionallaw_ppf}}
\label{app::conditionallaw_ppf}

When $\LM_{\alpha, \beta}=1$, 
we denote by 
\begin{align*}
\LL_{\beta}(\eta_i^{\delta}\cond \conn^{\delta}=\alpha)
\end{align*}
the conditional law of $\eta_i^{\delta}$ given the event that the planar link pattern $\conn^{\delta}$ induced by the Peano curves equals $\alpha$.

\begin{lemma}\label{lem::conditionallaw}
The following properties hold for the conditional law $\LL_{\beta}(\eta^{\delta}_i\cond \conn^{\delta}=\alpha)$. 
\begin{enumerate}
\item \label{item::Peanocurve_tight_cond}
The family $\{\LL_{\beta}(\eta^{\delta}_i\cond \conn^{\delta}=\alpha)\}_{\delta>0}$ of laws is 
precompact in the curve space~\eqref{eqn::metric_curvesspace}.

\smallbreak

\item \label{item::conditionallaw_indeptbeta}
The law $\LL_{\beta}(\eta_i^{\delta}\cond\conn^{\delta}=\alpha)$ does not depend on the choice of the b.c. $\beta$. 
\end{enumerate}
\end{lemma}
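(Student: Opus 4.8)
\textbf{Proof plan for Lemma~\ref{lem::conditionallaw}.}
The two assertions have quite different flavors, so I would treat them separately. For part~\ref{item::Peanocurve_tight_cond}, the key observation is that conditioning on a positive-probability event cannot destroy tightness: the unconditioned family $\{\eta_i^\delta\}_{\delta>0}$ is tight by Lemma~\ref{lem::Peanocurve_tight}, and by Proposition~\ref{prop::proba_limit} the conditioning event $\{\LA^\delta=\alpha\}$ has probability bounded below by some $c>0$ (uniformly in $\delta$, for $\delta$ small) since $p_\beta^\alpha>0$ and $\PP_\beta^\delta[\LA^\delta=\alpha]\to p_\beta^\alpha$. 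Concretely, if $K_\epsilon$ is a compact set of curves with $\PP_\beta^\delta[\eta_i^\delta\notin K_\epsilon]<\epsilon$ for all $\delta$, then
\begin{align*}
\LL_\beta(\eta_i^\delta\notin K_\epsilon\cond \LA^\delta=\alpha)
= \frac{\PP_\beta^\delta[\eta_i^\delta\notin K_\epsilon,\ \LA^\delta=\alpha]}{\PP_\beta^\delta[\LA^\delta=\alpha]}
\leq \frac{\epsilon}{c} ,
\end{align*}
which goes to zero with $\epsilon$; this is exactly tightness of the conditional laws. (One should first discard finitely many small $\delta$, or note that individual discrete curve laws are trivially tight, so the uniform lower bound $c$ is harmless.) This part is routine.

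For part~\ref{item::conditionallaw_indeptbeta}, the point is a \emph{discrete} identity: the conditional law of the Peano curves given their connectivity pattern $\alpha$ should be the same whether the boundary arcs are wired according to $\beta$ or according to any other $\beta'$ compatible with $\alpha$ (i.e.\ $\LM_{\alpha,\beta}=\LM_{\alpha,\beta'}=1$). The plan is to establish this at the level of the finite UST measure, before taking any scaling limit. The mechanism is the standard bijective correspondence between spanning trees with boundary condition $\beta$ whose Peano curves realize $\alpha$, and spanning \emph{forests} of the polygon with the $N$ primal boundary arcs as distinct components, constrained so that the induced non-crossing partition of the arcs is compatible with $\alpha$: such a forest, together with $\beta$, determines a spanning tree iff the meander of $\alpha$ and $\beta$ is a single loop. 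Crucially, on the event $\{\LA^\delta=\alpha\}$ the actual tree edges \emph{inside} $\Omega^\delta$ are in bijection with the data "(forest inside $\Omega^\delta$ with the right connectivity)", and this data does not see $\beta$ at all --- $\beta$ only enters through the exterior wiring, which is deterministic given $\alpha$. Since the UST with b.c.\ $\beta$ is uniform on trees with that b.c., conditioning on $\{\LA^\delta=\alpha\}$ yields the uniform measure on the corresponding set of interior forests, which is manifestly independent of $\beta$; hence the induced law of $\eta_i^\delta$ is independent of $\beta$. I would phrase this carefully using the exploration/grove description already set up in Section~\ref{subsec::ust_crossingproba} (the map $\alpha\mapsto\pi(\alpha)$ and the grove measure $\mathrm{Pr}^\delta$), observing that $\mathrm{Pr}^\delta[\,\cdot\mid \pi(\alpha)]$ --- the uniform grove conditioned to induce the non-crossing partition $\pi(\alpha)$ --- is precisely the common conditional law, and that for each $\beta$ with $\LM_{\alpha,\beta}=1$ the UST-with-b.c.-$\beta$ conditioned on $\{\LA^\delta=\alpha\}$ pushes forward to exactly this measure.

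The main obstacle, and the only genuinely delicate point, is making the bijection in part~\ref{item::conditionallaw_indeptbeta} airtight: one must check that (a) every interior configuration arising from a $\beta$-tree on $\{\LA^\delta=\alpha\}$ indeed "completes" to a tree for \emph{any} other compatible $\beta'$ in a unique way, and (b) the Peano curves inside $\Omega^\delta$ depend only on the interior configuration, not on which exterior completion was used. Both are standard facts about USTs/groves and the meander correspondence (the single-loop condition $\LM_{\alpha,\beta}=1$ is exactly what guarantees the completion exists and is unique), but they need to be stated with the right bookkeeping of half-edges on the boundary arcs. Once this discrete identity is in hand, part~\ref{item::conditionallaw_indeptbeta} is immediate and requires no limiting argument at all; the scaling-limit independence then follows for free for any subsequential limit.
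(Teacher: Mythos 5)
Your proposal is correct and matches the paper's argument: part~(1) is the standard "conditioning on a uniformly positive-probability event preserves tightness" bound using Lemma~\ref{lem::Peanocurve_tight} and $p_\beta^\alpha>0$, and part~(2) is exactly the identification of the $\beta$-conditioned UST Peano-curve law with the $\beta$-independent uniform-grove law conditioned on the connectivity pattern. The paper states the discrete identity more tersely, but the mechanism — bijection between $\beta$-trees realizing $\alpha$ and interior groves inducing $\pi(\alpha)$, with $\beta$ only supplying a deterministic exterior completion — is the same one you describe.
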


\begin{proof}
Property~\ref{item::Peanocurve_tight_cond} 
is a consequence of Lemma~\ref{lem::Peanocurve_tight} and~\eqref{eqn::proba_cvg}. 
For Property~\ref{item::conditionallaw_indeptbeta},  
recall that each grove induces $N$ Peano curves whose endpoints form a planar link pattern, 
and note that the conditional law of the grove Peano curve started from $x_{i}^{\delta,\diamond}$ 
given the event $\{\grconn^{\delta}=\pi(\alpha)\}$ is the same as $\LL_{\beta}(\eta_i^{\delta}\cond\conn^{\delta}=\alpha)$.
The former is independent of $\beta$, as claimed.
\end{proof}

Next, we focus on the Peano curve $\eta_1^{\delta}$ started from $x_1^{\delta, \diamond}$ and study its scaling limit ($i=1$). 
In the continuum,
we can relate the setup on $\Omega$ to the setup on $\HH$ via conformal invariance.
In $\HH = \varphi(\Omega)$, 
we denote $p_{\beta}^{\alpha}(\HH; \cdot) = p_{\beta}^{\alpha}(\cdot)$.  
From Theorem~\ref{thm::ust_general}, we already know that the scaling limit of the Peano curve $\eta_1^{\delta}$ is given by 
$\varphi^{-1}(\tilde{\eta}_1)$, where 
$\tilde{\eta}_1$ is the Loewner chain associated to the partition function $\LF_{\beta}$ started from $\varphi(x_1)$, i.e., the driving function of $\tilde{\eta}_1$ is the solution to the following system of SDEs: 
\begin{align} \label{eqn::hateta1_def}
\begin{cases}
\ud W_t = \sqrt{8} \, \ud B_t + 8 \, (\partial_{1}  \log \LF_{\beta})(W_t, V_t^2, \ldots, V_t^{2N}) \, \ud t, \\
\ud V_t^j =\frac{2 \, \ud t}{V_t^j-W_t},\\ 
W_0 = \varphi(x_{1}) ,\\
V_0^j=\varphi(x_j), \quad j\in\{2, \ldots, 2N\} .
\end{cases}
\end{align}
Let $T = T_{\varphi(x_2)}$ be the first time when $\varphi(x_2)$ is swallowed.

\begin{lemma}\label{lem::probatomart}
Assume the same setup as in Theorems~\ref{thm::ust_general} and~\ref{thm::ust_crossing_proba}.
Fix $\alpha, \beta\in\LP_N$ such that $\LM_{\alpha,\beta}=1$.
The following process is a positive
martingale 
with respect to the filtration generated by $\tilde{\eta}_1$\textnormal{:}
\begin{align}\label{eqn::proba_mart}
M_t := p_{\beta}^{\alpha}(W_t, V_t^2, \ldots, V_t^{2N}), \qquad t < T .
\end{align}
Moreover, the law $\LL_{\beta}(\varphi_{\delta}(\eta_1^{\delta})\cond \conn^{\delta}=\alpha)$ converges weakly to the law of $\tilde{\eta}_1$ weighted by $M_t$.
In particular, 
it converges weakly to the Loewner chain associated to $p_{\beta}^{\alpha} \, \LF_{\beta}$ started from $\varphi(x_1)$ up to time $T$. 
\end{lemma}

The proof follows a routine argument, which we outline below.

\begin{proof}
Let $\tau^{\delta}$ be the first time when $\eta_1^{\delta}$ hits $(x_2^{\delta, \diamond} \, x_{2N}^{\delta, \diamond})$. 
For every $t<\tau^{\delta}$, define $\smash{\Omega^{\delta,\diamond}(t)}$ to be the component of $\smash{\Omega^{\delta,\diamond}\setminus\eta_1^{\delta}[0,t]}$ with $\smash{x_2^{\delta,\diamond}}$ and $\smash{x_{2N}^{\delta, \diamond}}$ on its boundary. 
Define also $\Omega^{\delta}(t)$ to be the primal graph associated to $\Omega^{\delta, \diamond}(t)$ and define $x_1^{\delta}(t)$ to be the primal vertex of $\Omega^{\delta}(t)$ nearest to $\eta_1^{\delta}(t)$. 
Thanks to the domain Markov property of our model,
the conditional crossing probability associated to the interface $\eta_1^{\delta}$ gives a tautological martingale (with respect to the filtration generated by $\eta_1^{\delta}$):
\begin{align*}
M_t^\delta := \; &
\PP_{\beta}^{\delta}(\Omega^{\delta}(t); x_1^{\delta}(t), x_2^{\delta}, \ldots, x_{2N}^{\delta})[\conn^{\delta}=\alpha] \\
= \; & \PP_{\beta}^{\delta}(\Omega^{\delta}; x_1^{\delta}, x_2^{\delta}, \ldots, x_{2N}^{\delta})[\conn^{\delta}=\alpha\cond \eta_1^{\delta}[0,t]] , \quad t < \tau^{\delta} ,
\end{align*}
where we denote\footnote{We also denote by 
$\E_{\beta}^{\delta} = \E_{\beta}^{\delta}(\Omega^{\delta}; x_1^{\delta}, \ldots, x_{2N}^{\delta})$ the corresponding expectation.}  
by 
$\PP_{\beta}^{\delta} = \PP_{\beta}^{\delta}(\Omega^{\delta}; x_1^{\delta}, \ldots, x_{2N}^{\delta})$ the law of the UST on the primal polygon $(\Omega^{\delta}; x_1^{\delta}, \ldots, x_{2N}^{\delta})$ with b.c. $\beta \in \LP_N$. 
We define the stopping times 
\begin{align*}
\tau_\eps^\delta 
:= \inf \Big\{t \geq 0 \colon \min_{2\le j\le 2N} |\eta_1^\delta(t) - x_j^{\delta,\diamond}| = \eps \Big\} , \qquad \eps>0 ,
\end{align*}
and we similarly define the stopping times $\tau_\eps$ for $\varphi^{-1}(\tilde{\eta}_1)$. 
We may assume that $\tau_\eps^\delta\to\tau_\eps$ 
almost surely, by considering continuous modifications 
(see more details in~\cite[Appendix~B]{Karrila19}). 
Now, we find
\begin{align} \label{eqn::probamart_aux1}
\E^\delta_\beta \Big[ f \big( \varphi_{\delta}(\eta_1^\delta[0,t\wedge\tau_\eps^\delta]) \big) \, 
\one\{\conn^{\delta}=\alpha\} \Big]
\; = & \;\;
\E_\beta^\delta \Big[ f \big( \varphi_{\delta}(\eta_1^\delta[0,t\wedge\tau^\delta_\eps]) \big)  \, M_{t\wedge\tau_\eps^\delta}^\delta \Big]
\end{align}
for any bounded continuous function $f$. 
Let us consider the two sides of~\eqref{eqn::probamart_aux1} separately. 
\begin{itemize}[leftmargin=*]
\item 
LHS~of~\eqref{eqn::probamart_aux1}: 
By the precompactness from 
Item~\ref{item::Peanocurve_tight_cond} of Lemma~\ref{lem::conditionallaw}, 
we find a subsequential limit 
$\LL_{\beta}(\eta_1^{\delta_n}\cond \conn^{\delta_n}=\alpha)$ converging weakly to the law of some $\gamma_1$ as $\delta_n\to 0$.
Combining this  with~(\ref{eqn::proba_cvg},~\ref{eqn::proba_confinv}) from Proposition~\ref{prop::proba_limit}, we obtain
\begin{align*}
\E^{\delta_n}_\beta \big[ f \big(\varphi_{\delta_n}(\eta_1^{\delta_n}[0,t\wedge\tau_\eps^{\delta_n}]) \big) \, \one\{\conn^{\delta_n}=\alpha\} \big] 
\; \overset{\delta_n\to 0}{\longrightarrow} \;  & \; 
\E\big[ f \big(\varphi(\gamma_1[0,t\wedge \tau_{\eps}]) \big) \big] \;
p_{\beta}^{\alpha}(\Omega; x_1, \ldots, x_{2N})  \\
= & \; 
\E\big[ f \big(\varphi(\gamma_1[0,t\wedge \tau_{\eps}]) \big) \big] \;  M_{0}  .
\end{align*}

\smallbreak

\item 
RHS~of~\eqref{eqn::probamart_aux1}:
As $\varphi_{\delta}(\eta_1^{\delta})$ converges weakly to $\tilde{\eta}_1 $ by Theorem~\ref{thm::ust_general}, it follows that 
the discrete polygon $(\Omega^{\delta}(t\wedge \tau_{\eps}^{\delta}); x_1^{\delta}(t\wedge \tau_{\eps}), x_2^{\delta}, \ldots, x_{2N}^{\delta})$ is convergent in the Carath\'{e}odory sense  (see~\cite[Proof of Theorem~4.2]{HanLiuWuUST}). 
Combining with~(\ref{eqn::proba_cvg},~\ref{eqn::proba_confinv}) from Proposition~\ref{prop::proba_limit}, we find
\begin{align*}
\E_\beta^\delta \big[ f \big( \varphi_{\delta}(\eta_1^\delta[0,t\wedge\tau^\delta_\eps]) \big) \, M_{t\wedge\tau_\eps^\delta}^\delta\big] 
\; \overset{\delta\to 0}{\longrightarrow} \; & \; 
\E\big[ f \big(\tilde{\eta}_1[0,t\wedge\tau_\eps] \big) \, 
p_{\beta}^{\alpha}(W_{t\wedge\tau_\eps}, V_{t\wedge\tau_\eps}^2, \ldots, V_{t\wedge\tau_\eps}^{2N})\big] \\
= & \; 
\E\big[ f \big(\tilde{\eta}_1[0,t\wedge\tau_\eps] \big) \, 
M_{t\wedge\tau_\eps} \big] .  
\end{align*}
\end{itemize}
In conclusion, for any $\eps > 0$, we have
\begin{align*}
\; & \E\big[ f \big(\varphi(\gamma_1[0,t\wedge \tau_{\eps}]) \big) \big] 
= \E \Big[ f \big(\tilde{\eta}_1[0,t\wedge\tau_\eps] \big) \, 
\frac{M_{t\wedge\tau_\eps}}{M_{0}} \Big] \\
\quad \overset{\eps \to 0}{\Longrightarrow} \quad 
\; &  \E\big[ f \big(\varphi(\gamma_1[0,t\wedge T]) \big) \big] 
= \E \Big[ f \big(\tilde{\eta}_1[0,t\wedge T] \big) \, 
\frac{M_{t\wedge T}}{M_{0}} \Big] ,  
\quad  \textnormal{where} \quad \underset{\eps\to 0}{\lim}\, \tau_{\eps} \, \ge T .
\end{align*}
Indeed, a similar argument\footnote{Here, it is important that the limit of $\eta_1^{\delta}$ does not hit any points in $\{x_2, \ldots, x_{2N}\}$ except at its endpoint (Lemma~\ref{lem::Peanocurve_tight}).} 
as in~\cite[Proof of Theorem~4.2]{HanLiuWuUST}
shows that $\tau_{\eps}$ converge almost surely to a time at least $T$.
By the Radon-Nikodym theorem, this implies that up to time $T$, 
the process $M_t$ defined in~\eqref{eqn::proba_mart} is a martingale for $\tilde{\eta}_1$, 
and the limit of $\LL_{\beta}(\varphi_{\delta}(\eta_1^{\delta})
\cond \conn^{\delta}=\alpha)$ is the same as $\tilde{\eta}_1$ weighted by $M_t$. 
Furthermore, as the driving function of $\tilde{\eta}_1$ satisfies~\eqref{eqn::hateta1_def}, Girsanov's theorem yields 
\begin{align*}
\ud B_t = \ud G_t + \sqrt{8} \, \big(\partial_1\log p_{\beta}^{\alpha}\big)(W_t, V_t^2, \ldots, V_t^{2N})\ud t ,
\end{align*}
where $G_t$ is a Brownian motion under the law of $\gamma_1$.
Combining this with~\eqref{eqn::hateta1_def}, we see that $\varphi(\gamma_1)$ has the same law as the Loewner chain associated to $p^{\alpha}_{\beta} \, \LF_{\beta}$. 
\end{proof}

\subsection{Asymptotics of crossing probabilities}
\label{app::crossingproba_ASY}

In the proof of Proposition~\ref{prop::indeptbc2}, we will need the following asymptotics property of the limit crossing probabilities.

\begin{lemma}\label{lem::crossingproba_ASY}
Fix $N\ge 2$ and a polygon $(\Omega; x_1, \ldots, x_{2N})$ whose boundary is a $C^1$-Jordan curve. 
Fix $j \in \{1,2, \ldots, 2N-1 \}$ and suppose $\{j, j+1\}\in\alpha$ and $\LM_{\alpha, \beta}=1$.
Then, the function $p^{\alpha}_{\beta}$ defined in Proposition~\ref{prop::proba_limit} has the following asymptotics: 
for all $j\in\{1,2, \ldots, 2N\}$ and $\xi\in(x_j \, x_{j+1})$, we have 
\begin{align}
\label{eqn::crossingproba_ASY_alpha}
\lim_{x_j, x_{j+1}\to \xi} \;
p^{\alpha}_{\beta}(\Omega; \boldsymbol{x}) = \; & p^{\hat{\wr}_j(\alpha)}_{\hat{\wp}_j(\beta)}(\Omega; \boldsymbol{\ddot{x}}_j) ,
\end{align}
where $\boldsymbol{x} = (x_1, \ldots, x_{2N})$ 
and $\boldsymbol{\ddot{x}}_j = (x_1, \ldots, x_{j-1}, x_{j+2}, \ldots, x_{2N})$.
\end{lemma}

\begin{proof}
To facilitate notation, we take $j=2N-1$ without loss of generality. So we assume $\{2N-1, 2N\}\in\alpha$ and write $\ddot{\boldsymbol{x}} = (x_1, \ldots, x_{2N-2})$. To prove~\eqref{eqn::crossingproba_ASY_alpha}, we approximate $(\Omega; x_1, \ldots, x_{2N})$ by discrete polygons in the same setup as in Theorem~\ref{thm::ust_general}, i.e.,
in the sense of~\eqref{eqn::polygon_cvg}. 
Note that at this point, the results of Theorem~\ref{thm::ust_general} and 
Proposition~\ref{prop::proba_limit} are already at our disposal. 
In particular,~\eqref{eqn::proba_cvg} immediately gives 
\begin{align*}
\lim_{\delta\to 0} \PP_{\beta}^{\delta}[\conn^{\delta}=\alpha]
= p^{\alpha}_{\beta}(\Omega; \boldsymbol{x}) . 
\end{align*}
To prove~\eqref{eqn::crossingproba_ASY_alpha}, 
we shall consider the double-limit of 
$\PP_{\beta}^{\delta}[\conn^{\delta}=\alpha]$ as $\delta\to 0$ and $x_{2N-1}, x_{2N}\to \xi$.

Let $\eta^{\delta}_N$ denote the Peano curve starting from $x_{2N-1}^{\delta, \diamond}$.
Pick $\eps>0$ much larger than the diameter of the arc $(x_{2N-1}x_{2N})$ but much smaller than the distance between $(x_{2N-1} \, x_{2N})$ and $(x_1 \, x_{2N-2})$. 
With $\eps$ fixed, write
\begin{align} \label{eqn::proba_asy_aux2}
\PP_{\beta}^{\delta}[\conn^{\delta}=\alpha]
= \; & \PP_{\beta}^{\delta}\big[\conn^{\delta}=\alpha, \; \eta^{\delta}_N \not\subset B(\xi, \eps)\big]
\, + \,  \PP_{\beta}^{\delta}\big[\conn^{\delta}=\alpha, \; \eta^{\delta}_N \subset B(\xi, \eps)\big] .
\end{align}
We consider the two terms on the right-hand side separately. 
\begin{itemize}
\item First term on RHS of~\eqref{eqn::proba_asy_aux2}:
A sample of the UST can be generated using loop-erased random walks (LERW) via Wilson's algorithm~\cite{WilsonUSTLERW} 
(see also~\cite[Section~2]{SchrammScalinglimitsLERWUST}).
Note that, on the event $\{\conn^{\delta}=\alpha\}$,
if $\eta^{\delta}_N \not\subset B(\xi,\eps)$, then the LERW branch in the dual tree connecting the arcs 
$(x_{2N-2}^{\delta, *} \, x_{2N-1}^{\delta, *})$ and $(x_{2N}^{\delta, *} \, x_1^{\delta, *})$ goes outside of $B(\xi,\eps)$.
Thus, it follows from the Beurling estimate for simple random walk (see, e.g.,~\cite[Lemma~2.1]{SchrammScalinglimitsLERWUST}) that 
\begin{align*}
\PP_{\beta}^{\delta} \big[ \conn^{\delta}=\alpha, \;  \eta_N^{\delta}\not\subset B(\xi, \eps)\big]
\le C_0 \, \Big( \frac{\diam(x_{2N-1} \, x_{2N})}{\eps}\Big)^{C_1} , \quad\textnormal{for $\delta>0$ small enough},
\end{align*}
where $C_0, C_1>0$ are universal constants and the right-hand side is uniform in $\delta$. 

\smallbreak

\item Second term on RHS of~\eqref{eqn::proba_asy_aux2}:
By our choice of $\eps$, we have
\begin{align*}
\PP_{\beta}^{\delta}\big[\conn^{\delta}=\alpha, \; \eta^{\delta}_N \subset B(\xi, \eps)\big] 
= \; & 
\E_{\beta}^{\delta}\Big[\one\{\eta^{\delta}_N \subset B(\xi, \eps)\} \,
\PP_{\beta}^{\delta}\big[\conn^{\delta}=\alpha\cond \eta^{\delta}_N \big]\Big] .
\end{align*}
By precompactness (Lemma~\ref{lem::Peanocurve_tight}), $\eta^{\delta}_N$ has 
a subsequential weak scaling limit $\eta$, and we may couple the convergent subsequence so that it converges almost surely.
Note that the connected component 
of $\Omega^{\delta}\setminus\eta_N^{\delta}$ with $x_1^{\delta}, \ldots, x_{2N-2}^{\delta}$ on its boundary converges in the Carath\'{e}odory sense to 
the connected component
of $\Omega\setminus\eta$ with $x_1,\ldots, x_{2N-2}$ on its boundary 
(by a standard argument, see, e.g.~\cite{GarbanWuFKIsing}). 
Therefore, Proposition~\ref{prop::proba_limit} shows that, almost surely, 
\begin{align*}
\lim_{\delta\to 0} \PP_{\beta}^{\delta}\big[\conn^{\delta}=\alpha\cond \eta^{\delta}_N \big]
= p^{\hat{\wr}_{2N-1}(\alpha)}_{\hat{\wp}_{2N-1}(\beta)}(\Omega\setminus\eta; \ddot{\boldsymbol{x}}) 
= p^{\hat{\wr}_{2N-1}(\alpha)}_{\hat{\wp}_{2N-1}(\beta)}(\Omega; \psi_{\eta}(\ddot{\boldsymbol{x}})),
\end{align*}
where $\psi_{\eta}$ is the conformal map
from the connected component of $\Omega\setminus\eta$ with $x_1, \ldots, x_{2N-2}$ on its boundary onto $\Omega$
fixing $x_1, x_2$ and $x_{2N-2}$,
and we write $\psi_{\eta}(\ddot{\boldsymbol{x}}) = (\psi_{\eta}(x_1), \ldots, \psi_{\eta}(x_{2N-2}))$. 
In conclusion, as $\delta \to 0$ we obtain the following bounds
for the second term on the RHS of~\eqref{eqn::proba_asy_aux2}:
\begin{align*}
\E \Big[\one\{\eta \subset B(\xi, \eps)\} \,
p^{\hat{\wr}_{2N-1}(\alpha)}_{\hat{\wp}_{2N-1}(\beta)}(\Omega; \psi_{\eta}(\ddot{\boldsymbol{x}})) \Big] 
\leq \; & 
\liminf_{\delta\to 0} 
\PP_{\beta}^{\delta}\big[\conn^{\delta}=\alpha, \; \eta^{\delta}_N \subset B(\xi, \eps)\big] \\
\leq \; & 
\limsup_{\delta\to 0}
\PP_{\beta}^{\delta}\big[\conn^{\delta}=\alpha, \; \eta^{\delta}_N \subset B(\xi, \eps)\big] \\
\leq \;&  \E \Big[\one\{\eta \subset \overline{B(\xi, \eps)}\} \,
p^{\hat{\wr}_{2N-1}(\alpha)}_{\hat{\wp}_{2N-1}(\beta)}(\Omega; \psi_{\eta}(\ddot{\boldsymbol{x}}))\Big] .
\end{align*}
\end{itemize}
Collecting these estimates 
and taking ``$\underset{\delta\to 0}{\liminf}$'',
resp.~``$\underset{\delta\to 0}{\limsup}$'', we obtain 
\begin{align*}
p^{\alpha}_{\beta}(\Omega; \boldsymbol{x})
\geq \; & \E \Big[\one\{\eta \subset B(\xi, \eps)\} \,
p^{\hat{\wr}_{2N-1}(\alpha)}_{\hat{\wp}_{2N-1}(\beta)}(\Omega; \psi_{\eta}(\ddot{\boldsymbol{x}}))\Big] , \\
p^{\alpha}_{\beta}(\Omega; \boldsymbol{x})
\leq \; & \E \Big[\one\{\eta \subset \overline{B(\xi, \eps)}\} \,
p^{\hat{\wr}_{2N-1}(\alpha)}_{\hat{\wp}_{2N-1}(\beta)}(\Omega; \psi_{\eta}(\ddot{\boldsymbol{x}}))\Big]
\, + \, 
C_0 \, \Big( \frac{\diam(x_{2N-1} \, x_{2N})}{\eps}\Big)^{C_1} .
\end{align*}
To finish, after taking first the limit $x_{2N-1}, x_{2N}\to \xi$
(note that $p^{\alpha}_{\beta}(\boldsymbol{x})$ is a smooth function of the marked points by Item~\ref{item:cross_proba_smooth} of
Proposition~\ref{prop::proba_limit}) 
and then the limit $\eps \to 0$, we obtain
\begin{align*}
\lim_{x_{2N-1}, x_{2N}\to \xi} p^{\alpha}_{\beta}(\Omega; \boldsymbol{x}) 
= & \;  \lim_{\eps\to 0}\lim_{x_{2N-1}, x_{2N}\to \xi}\E \Big[\one\{\eta\subset B(\xi, \eps)\} \, p^{\hat{\wr}_{2N-1}(\alpha)}_{\hat{\wp}_{2N-1}(\beta)}(\Omega; \psi_{\eta}(\ddot{\boldsymbol{x}}))\Big] \\
= & \; \lim_{\eps\to 0}\lim_{x_{2N-1}, x_{2N}\to \xi} 
\E \Big[\one\{\eta\subset \overline{B}(\xi, \eps)\} \, p^{\hat{\wr}_{2N-1}(\alpha)}_{\hat{\wp}_{2N-1}(\beta)}(\Omega; \psi_{\eta}(\ddot{\boldsymbol{x}}))\Big] \\
= & \;  p^{\hat{\wr}_{2N-1}(\alpha)}_{\hat{\wp}_{2N-1}(\beta)}(\Omega; \ddot{\boldsymbol{x}}) ,
\end{align*}
which gives~\eqref{eqn::crossingproba_ASY_alpha} and concludes the proof.
\end{proof}

\begin{lemma}\label{lem::product_ASY1}
Fix $N\ge 2$ and $j \in \{1,2, \ldots, 2N-1 \}$  
and suppose $\{j, j+1\}\in\alpha$ and $\LM_{\alpha, \beta}=1$. 
Then, for all $\xi \in (x_{j-1}, x_{j+2})$, we have
\begin{align} \label{eqn::product_ASY1}
\; & \lim_{x_j , x_{j+1} \to \xi} 
\frac{\LM_{\alpha, \beta} \; p^{\alpha}_{\beta}(\boldsymbol{x}) \; \LF_{\beta}(\boldsymbol{x})}{(x_{j+1} - x_j)^{1/4} |\log(x_{j+1}-x_j)|} 
= \LM_{\hat{\wr}_j(\alpha), \hat{\wp}_j(\beta)} \; p^{\hat{\wr}_j(\alpha)}_{\hat{\wp}_j(\beta)}(\boldsymbol{\ddot{x}}_j) \; \LF_{\hat{\wp}_j(\beta)}(\boldsymbol{\ddot{x}}_j) ,
\end{align}
where $\boldsymbol{x} = (x_1, \ldots, x_{2N})$ 
and 
$\boldsymbol{\ddot{x}}_j = (x_1, \ldots, x_{j-1}, x_{j+2}, \ldots, x_{2N})$. 
\end{lemma}

\begin{proof}
By Item~\ref{item:meander_removal} of Lemma~\ref{lem::meander_lemma} we have $\LM_{\alpha, \beta} = \LM_{\hat{\wr}_j(\alpha), \hat{\wp}_j(\beta)}$. 
Thus, the claim follows using the asymptotics properties~\eqref{eqn::USTASY2} and~\eqref{eqn::crossingproba_ASY_alpha}.
\end{proof}

\subsection{Proof of Proposition~\ref{prop::indeptbc2}}
\label{app::indeptbc2}

\begin{proof}[Proof of Proposition~\ref{prop::indeptbc2}]
We denote the quantity of interest as
\begin{align*}
X^{\alpha}_{\beta}(\boldsymbol{x}) 
:= p^{\alpha}_{\beta}(\boldsymbol{x})\;\LF_{\beta}(\boldsymbol{x}) , \qquad \boldsymbol{x}\in\chamber_{2N} .
\end{align*}
On the one hand, by Lemma~\ref{lem::conditionallaw} the limit of $\LL_{\beta}(\varphi_{\delta}(\eta_1^{\delta})\cond \conn^{\delta}=\alpha)$ 
is independent of $\beta$, while 
on the other hand, by Lemma~\ref{lem::probatomart} it is the same as the law of the Loewner chain associated to $p^{\alpha}_{\beta}\LF_{\beta}$.  
This implies that
$\partial_1\log(p^{\alpha}_{\beta}\LF_{\beta})$ is independent of $\beta$ (note that $p^{\alpha}_{\beta}(\boldsymbol{x})$ is a smooth function of the marked points by Item~\ref{item:cross_proba_smooth} of
Proposition~\ref{prop::proba_limit}).
Moreover, by rotation symmetry of the UST model, we see that $\partial_i\log(p^{\alpha}_{\beta}\LF_{\beta})$ is independent of $\beta$ for all $i\in\{1, 2, \ldots, 2N\}$. 
Thus, for any $\boldsymbol{x},\boldsymbol{y}\in\chamber_{2N}$, the quantity $\log (p^{\alpha}_{\beta}(\boldsymbol{x})\LF_{\beta}(\boldsymbol{x}))-\log (p^{\alpha}_{\beta}(\boldsymbol{y})\LF_{\beta}(\boldsymbol{y}))$ is independent of $\beta$. 
This shows that the following ratio is a constant: 
\begin{align*}
C(\alpha, \beta, \gamma):=\frac{X^{\alpha}_{\beta}(\boldsymbol{x})}{X^{\alpha}_{\gamma}(\boldsymbol{x})} , \qquad \boldsymbol{x}\in\chamber_{2N} .
\end{align*}
It suffices to show that $C(\alpha, \beta, \gamma)=1$. We prove this by induction on $N \geq 2$. The case of $N=2$ being trivial, we assume that $N \geq 3$,  and
pick $j$ such that $\{j, j+1\}\in\alpha$. 
Then, for all $\xi \in (x_{j-1}, x_{j+2})$, from the asymptotics~\eqref{eqn::product_ASY1} in Lemma~\ref{lem::product_ASY1} and the induction hypothesis, we find that 
\begin{align*}
C(\alpha, \beta, \gamma)=\lim_{x_j, x_{j+1}\to \xi}\frac{X^{\alpha}_{\beta}(\boldsymbol{x})}{X^{\alpha}_{\gamma}(\boldsymbol{x})}=\frac{X^{\hat{\wr}_j(\alpha)}_{\hat{\wp}_j(\beta)}(\boldsymbol{\ddot{x}}_j)}{X^{\hat{\wr}_j(\alpha)}_{\hat{\wp}_j(\gamma)}(\boldsymbol{\ddot{x}}_j)}=C(\hat{\wr}_j(\alpha), \hat{\wp}_j(\beta), \hat{\wp}_j(\gamma))=1 .
\end{align*}
This shows that the quantity 
$X^{\alpha}_{\beta} = p^{\alpha}_{\beta} \; \LF_{\beta} = X^{\alpha}$ 
is independent of $\beta$ as long as $\LM_{\alpha, \beta} = 1$. It remains to identify it with the pure partition function $\PartF_{\alpha}$ defined in~\eqref{eqn::ppf_def}.
By Proposition~\ref{prop::proba_limit}, 
as a sum of total probability, we have
$\sum_{\alpha} \LM_{\alpha, \beta} \, p_{\beta}^{\alpha} = 1$, so 
\begin{align*}
\LF_{\beta} 
= \; & \sum_{\alpha \in \LP_N} \LM_{\alpha, \beta} \, p_{\beta}^{\alpha} \, \LF_{\beta} 
= \sum_{\alpha \in \LP_N} \LM_{\alpha, \beta} \, X^{\alpha} .
\end{align*} 
Inverting the matrix $\LM$ and recalling the definition~\eqref{eqn::ppf_def}, we obtain $\PartF_{\alpha} = p_{\beta}^{\alpha}\LF_{\beta} = X^{\alpha}$.
\end{proof}


\bigskip{}


\newcommand{\etalchar}[1]{$^{#1}$}

\end{document}